\tikzstyle{vertex}=[circle,draw=black,fill=black,inner sep=0,minimum size=3pt,text=white,font=\footnotesize]
\date{}
\title{\vspace{-1.2cm} Large Cuts in Hypergraphs via Energy}
\author{Eero R\"aty\thanks{Ume\r{a} University, \emph{e-mail}: \textbf{eero.raty@umu.se}. Supported by a postdoctoral grant from the Osk.\ Huttunen Foundation.}, 
	Istv\'an Tomon\thanks{Ume\r{a} University, \emph{e-mail}: \textbf{istvan.tomon@umu.se}. Supported in part by the Swedish Research Council grant VR 2023-03375.}
}
\theoremstyle{plain}
\newtheorem{theorem}{Theorem}[section]
\newtheorem{corollary}[theorem]{Corollary}
\newtheorem{claim}[theorem]{Claim}
\newtheorem{lemma}[theorem]{Lemma}
\newtheorem{conjecture}[theorem]{Conjecture}
\Crefname{theorem}{Theorem}{Theorems}
\Crefname{definition}{Definition}{Definitions}
\Crefname{corollary}{Corollary}{Corollaries}
\Crefname{claim}{Claim}{Claims}
\Crefname{lemma}{Lemma}{Lemmas}
\Crefname{conjecture}{Conjecture}{Conjectures}
\Crefname{problem}{Problem}{Problems}
\Crefname{prop}{Proposition}{Propositions}
\theoremstyle{definition}
\newtheorem{definition}{Definition}
\DeclareMathOperator{\tr}{tr}
\DeclareMathOperator{\surp}{sp}
\DeclareMathOperator{\mc}{mc}
\begin{document}
	
	\maketitle
	\sloppy

\begin{abstract}
A simple probabilistic argument shows that every $r$-uniform hypergraph with $m$ edges contains an $r$-partite subhypergraph with at least $\frac{r!}{r^r}m$ edges. The celebrated result of Edwards states that in the case of graphs, that is $r=2$, the resulting bound $m/2$ can be improved to $m/2+\Omega(m^{1/2})$, and this is sharp. We prove that if $r\geq 3$, then there is an $r$-partite subhypergraph with at least $\frac{r!}{r^r} m+m^{3/5-o(1)}$ edges.  Moreover, if the hypergraph is linear, this can be improved to $\frac{r!}{r^r} m+m^{3/4-o(1)},$ which is tight up to the $o(1)$ term. These improve results of Conlon, Fox, Kwan, and Sudakov. Our proof is based on a combination of probabilistic, combinatorial, and linear algebraic techniques, and semidefinite programming.

A key part of our argument is relating the \emph{energy} $\mathcal{E}(G)$ of a graph $G$ (i.e.\ the sum of absolute values of eigenvalues of the adjacency matrix) to its  maximum cut. We prove that every $m$ edge multigraph $G$ has a cut of size at least $m/2+\Omega(\frac{\mathcal{E}(G)}{\log m})$, which might be of independent interest.
\end{abstract}
 
\section{Introduction}

Given a graph $G$, a \emph{cut} in $G$ is a partition of the vertices into two parts, together with all the edges having exactly one vertex in each of the parts. The size of the cut is the number of its edges. The MaxCut problem asks for the maximum size of a cut, denoted by $\mc(G)$, and it is a central problem both in discrete mathematics and theoretical computer science \cite{AlonMaxCut,AKS05,BJS,Edwards1,Edwards2,GJS,GW95}. If $G$ has $m$ edges, a simple probabilistic argument shows that the maximum cut is always at least $m/2$, so it is natural to study the \emph{surplus}, defined as $\surp(G)=\mc(G)-m/2$. A fundamental result in the area is due to Edwards \cite{Edwards1,Edwards2} stating that  every graph $G$ with $m$ edges satisfies $\surp(G)\geq \frac{\sqrt{8m+1}-1}{8}=\Omega(m^{1/2})$ and this is tight when $G$ is the complete graph on an odd number of vertices. The study of MaxCut in graphs avoiding a fixed graph $H$ as a subgraph was initiated by Erd\H{o}s and Lov\'asz (see \cite{erdos}) in the 70's, and substantial amount of research was devoted to this problem since then, see e.g.\ \cite{AlonMaxCut,AKS05,BJS,GJS,GW95}.
 
 The MaxCut problem can be naturally extended to hypergraphs. If $H$ is an $r$-uniform hypergraph (or $r$-graph, for short), a \emph{$k$-cut} in $H$ is a partition of the vertex set into $k$ parts, together with all the edges having at least one vertex in each part.  Similarly as before, $\mc_k(H)$ denotes the maximum number of edges in a $k$-cut. This notion was first considered by Erd\H{o}s and Kleitman \cite{EK68} in 1968, and they observed that if $H$ has $m$ edges, then  a random $k$-cut (i.e., where every vertex is assigned to any of the parts independently with probability $1/k$) has $\frac{S(r,k)k!}{k^r}m$ edges in expectation, where $S(r,k)$ is the Stirling number of the second kind, i.e.\ the number of unlabelled partitions of $\{1,\dots,r\}$ into $k$ parts. In computer science, the problem of computing the 2-cut is known as \emph{max set splitting}, or $Er$-set splitting if the hypergraph is $r$-uniform, see e.g.\ \cite{guruswami,hastad}.
 
 Similarly as before, we define the \emph{$k$-surplus} of an $r$-graph $H$ as $$\surp_k(H):=\mc_k(H)-\frac{S(r,k)k!}{k^r}m,$$ that is, the size of the maximum $k$-cut of $H$ above the expectation of the random cut. A simple extension of the methods used for graphs shows that  Edwards' lower bound holds for hypergraphs as well, that is, $\surp_k(H)=\Omega_r(m^{1/2})$. However, as proved by Conlon, Fox, Kwan, and Sudakov \cite{CFKS},  this can be significantly improved, unless $(r,k)=(3,2)$.

Consider the case $r=3$ and $k=2$. Let $H$ be a 3-graph and let $G$ be the underlying multi-graph of $H$, that is, we connect every pair of vertices in $G$ by an edge as many times as it appears in an edge of $H$. It is easy to see that if $X\cup Y$ is a partition of $V(H)$, then $H$ has half the number of edges in the 2-cut $(X,Y)$ as $G$. Hence, the problem of 2-cuts in 3-graphs reduces to a problem about  cuts in multi-graphs, and thus not much of an interest from the perspective of hypergraphs. In particular, in case $H$ is a Steiner triple system, then $G$ is the complete graph, thus $\surp_2(H)=\Theta(\sqrt{m})$. Therefore, the bound of Edwards is sharp in this case. However, as it was proved by Conlon, Fox, Kwan, and Sudakov \cite{CFKS}, in the case of 3-cuts, we are guaranteed much larger surplus, in particular $\surp_3(H)=\Omega_r(m^{5/9})$. We further improve this in the following theorem.
 
\begin{theorem}\label{thm:main_cut}
    Let $H$ be a $3$-graph with $m$ edges. Then $H$ has a 3-cut of size at least
    $$\frac{2}{9}m+\Omega\left(\frac{m^{3/5}}{(\log m)^2}\right).$$
\end{theorem}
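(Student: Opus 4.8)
The plan is to turn a three‑colouring of $H$ into a cut of an auxiliary multigraph and then invoke the energy--MaxCut inequality quoted in the abstract. I would first reveal one colour class: let $V_1\subseteq V(H)$ be random, each vertex included independently with probability $1/3$, and put $W=V(H)\setminus V_1$. For $v\in V_1$ let $L_v$ be the link of $v$ (the graph with $\{a,b\}\in L_v$ iff $\{v,a,b\}\in H$) and let $G^{*}=G^{*}_{V_1}$ be the multigraph on $W$ in which $\{u,w\}$ has multiplicity $|\{v\in V_1:\{u,v,w\}\in H\}|$, i.e.\ $G^{*}=\sum_{v\in V_1}L_v[W]$. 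The identity to exploit is: for every split $W=V_2\sqcup V_3$, the number of edges of $H$ meeting all of $V_1,V_2,V_3$ equals $\mathrm{cut}_{G^{*}}(V_2,V_3)$ --- an edge of $H$ contributes to $G^{*}$ exactly when precisely one of its three vertices lies in $V_1$, and the resulting edge is cut exactly when the remaining two vertices lie on opposite sides of $W$. Hence $\mc_3(H)\ge\max_{V_1}\mc(G^{*}_{V_1})\ge\mathbb{E}_{V_1}[\mc(G^{*}_{V_1})]$. Applying $\mc(G^{*})\ge\tfrac12 e(G^{*})+\Omega(\mathcal{E}(G^{*})/\log m)$ (legitimate since $e(G^{*})\le m$, so the logarithm there is at most $\log m$) and using $\mathbb{E}[e(G^{*})]=3\cdot\tfrac13\cdot(\tfrac23)^2 m=\tfrac49 m$ (it counts the edges of $H$ with exactly one vertex in $V_1$), linearity of expectation yields
\[
\surp_3(H)\ \ge\ \mathbb{E}_{V_1}[\mc(G^{*}_{V_1})]-\tfrac29 m\ \ge\ \Omega\!\left(\frac{\mathbb{E}_{V_1}[\mathcal{E}(G^{*}_{V_1})]}{\log m}\right).
\]
So everything comes down to showing $\mathbb{E}_{V_1}[\mathcal{E}(G^{*}_{V_1})]=\Omega(m^{3/5}/\log m)$.

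For the expected energy I would use the elementary bound $\mathcal{E}(G)\ge \tr(A^2)/\|A\|_{\mathrm{op}}$ (valid for any symmetric $A$). With $A=A(G^{*})$, a short second‑moment computation gives $\mathbb{E}[\tr(A^2)]=\Theta(m+P_2)$, where $P_2$ is the number of ordered pairs of distinct edges of $H$ that share two vertices; in particular $\mathbb{E}[\tr(A^2)]=\Omega(m)$ always, and a single pair of codegree $d$ forces $\mathbb{E}[\tr(A^2)]=\Omega(d^2)$. On the other hand each vertex of $G^{*}$ has degree at most its $H$‑degree, so surely $\|A(G^{*})\|_{\mathrm{op}}\le\Delta(G^{*})\le\Delta(H)$. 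Hence
\[
\mathbb{E}_{V_1}[\mathcal{E}(G^{*}_{V_1})]\ \ge\ \frac{\mathbb{E}[\tr(A^2)]}{\Delta(H)}\ =\ \Omega\!\left(\frac{m}{\Delta(H)}\right),
\]
which already settles every $H$ with $\Delta(H)\le m^{2/5}$, with room to spare.

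The main obstacle is the regime $\Delta(H)>m^{2/5}$, where $\|A(G^{*})\|_{\mathrm{op}}\le\Delta(G^{*})$ is far too lossy. The reason it is lossy is instructive: if a high‑degree vertex $v_0$ lands in $W$, then for every $u\in V_1\cap N_H(v_0)$ the link $L_u$ contributes a star centred at $v_0$, so $G^{*}$ contains a heavy weighted star whose operator norm is only of order $\sqrt{\Delta(G^{*})\cdot(\text{leaf multiplicity})}$ --- and this is precisely what keeps $\mathcal{E}(G^{*})$ large (in the extreme ``cone over a clique'' $H$ one gets $\mathcal{E}(G^{*})\gtrsim m^{3/4}$ with constant probability). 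To turn this into a theorem I would (i) replace the crude operator‑norm bound by $\lambda_1(G^{*})\le\max_{\{u,w\}\in G^{*}}\sqrt{d_{G^{*}}(u)\,d_{G^{*}}(w)}$, and, in the dual situation where the spectrum of $G^{*}$ is concentrated on its largest eigenvalue, peel off the top $O(1)$ eigenvalues before applying the trace bound to the rest; and (ii) run a dyadic case analysis over the degree and codegree parameters of $H$ (this is where the second logarithm in the statement enters). Balancing the generic bound $m/\Delta(H)$ against the energy harvested from a vertex of degree $\Delta(H)$ is what fixes the exponent: the two contributions meet at $\Delta(H)=m^{2/5}$, giving $m^{3/5}$.

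A fallback I would keep ready for the $\Delta(H)>m^{2/5}$ regime is to delete a few edges to reach a large linear (or almost‑linear) subhypergraph --- few deletions suffice unless $P_2$ is large, in which case the generic bound already wins --- and then to invoke the linear case $m^{3/4-o(1)}$; the technical nuisance is that a cut optimised for the subhypergraph must still be shown to pick up about a $\tfrac29$‑fraction of the discarded rainbow edges, which one arranges by randomising the choice of subhypergraph and averaging.
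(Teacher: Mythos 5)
Your high-level framework is the same as the paper's: sample one part $V_1$ at density $1/3$, form the auxiliary multigraph $G^{*}$ on the remaining vertices, reduce $\surp_3(H)$ to $\mathbb{E}[\surp(G^{*})]$, and then lower-bound the surplus of a graph by its energy divided by $\log n$ via the Grothendieck/semidefinite-relaxation argument (Corollary~\ref{cor:surplus_energy}). The expectation computation $\mathbb{E}[e(G^{*})]=\frac49 m$ and the reduction $\surp_3(H)\ge \Omega(\mathbb{E}[\mathcal{E}(G^{*})]/\log m)$ are all correct and match the paper's structure.

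The genuine gap is in the energy lower bound, and your own diagnosis of where it breaks is the right place to look, but your proposed fixes do not supply the missing idea. Your estimate $\mathcal{E}(G^{*})\ge \tr(A^2)/\|A\|_{op}\ge 2e(G^{*})/\Delta(H)$ gives only $\mathbb{E}[\mathcal{E}(G^{*})]=\Omega(m/\Delta)$. To deduce $m^{3/5}$ from a bound of this shape you would need to reduce to $\Delta\le m^{2/5}$, but the degree--codegree cleaning in the paper (Section~\ref{sect:proofs}) can only push $\Delta$ down to about $m^{3/5}$: if you delete all vertices of degree above $m^{2/5}$ there may be up to $\Theta(m^{3/5})$ of them, and with codegree allowed to be as large as $m^{1/5}$ the deleted part can carry $\Theta(m^{7/5})\gg m$ edges, so the reduction collapses. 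In other words, the exponent $3/5$ is forced by balancing $\Delta\approx m^{3/5}$ against $D\approx m^{1/5}$, and one needs an energy bound of the form $\Omega\bigl(m/\sqrt{\Delta D}\bigr)$, with $\sqrt{\Delta D}$ rather than $\Delta$ in the denominator. Your bound is quadratically off in exactly the regime that matters, and your suggestions (i) peeling top eigenvalues or replacing $\Delta$ by $\max\sqrt{d(u)d(w)}$, and (ii) a dyadic degree/codegree split, are still fundamentally trying to control $\|A(G^{*})\|$ itself, which can truly be of order $\Delta$; they do not close the quadratic loss.

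The paper's key idea, which is absent from your proposal, is to control not $\|\mathbf{B}\|$ (where $\mathbf{B}$ is the adjacency matrix of the sampled graph) but $\|p A-\mathbf{B}\|$, the spectral norm of the deviation from its mean. Here one regards $G$ as an edge-coloured multigraph where the colour of a pair $\{u,w\}$ is the third vertex of the hyperedge, and applies Oliveira's matrix martingale concentration (Lemma~\ref{lemma:matrix_concentration}) to the sum $\mathbf{B}=\sum_i \mathbf{I}_i A_i$ over colour classes. Because each colour class has maximum degree at most the codegree bound $D$, the increments satisfy $\|A_i\|\le D$ and the variance proxy satisfies $\|\sum_i A_i^2\|\le \Delta D$ (via a closed-walk count), so $\|pA-\mathbf{B}\|=O(\sqrt{\Delta D}\log n)$ with high probability. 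Combined with $\mathbb{E}\|pA-\mathbf{B}\|_F^2=\Omega(pm)$ (sum of variances of the entries) and the same trace-over-norm trick you use, this yields $\mathbb{E}[\mathcal{E}(pA-\mathbf{B})]=\Omega\bigl(pm/(\sqrt{\Delta D}\log m)\bigr)$ (Lemma~\ref{lemma:large_energy}). A further subadditivity step for energy (Lemma~\ref{lemma:energy_additive}) and a case split (Lemma~\ref{lemma:ultimate}) convert this into a lower bound on $\mathbb{E}[\surp(M+\mathbf{B})]$, where $M$ is the deterministic adjacency contribution from colours revealed outside a fixed colour pool $Q$. None of this machinery is present in your argument. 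Your ``fallback'' to the linear case $m^{3/4-o(1)}$ is also circular from the standpoint of this paper: that bound is proved via the same $\sqrt{\Delta D}$ lemma, so it cannot be invoked as a black box here.

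In short: the skeleton (random tripartition, energy-to-surplus, trace-over-norm) is right, but the step that actually produces the exponent $3/5$ --- matrix concentration applied to the colour-sampled deviation $pA-\mathbf{B}$, giving $\|\cdot\|_{op}=O(\sqrt{\Delta D}\log n)$ instead of $O(\Delta)$ --- is missing, and without it the bound you obtain in the critical regime $\Delta\in[m^{2/5},m^{3/5}]$ is $m^{2/5}$, not $m^{3/5}$.
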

 
 If $H$ is the complete 3-graph on $n$ vertices, then $m=\binom{n}{3}$ and $\surp_3(H)=O(n^2)$, so $\surp_3(H)$ might be as small as $O(m^{2/3})$. It is conjectured by Conlon, Fox, Kwan, and Sudakov \cite{CFKS} that this upper bound is sharp. A hypergraph is \emph{linear} if any two of its edges intersect in at most one vertex. We prove the following stronger bound for linear 3-graphs.

\begin{theorem}\label{thm:main_cut_linear}
    Let  $H$ be a linear $3$-graph with $m$ edges.  Then $H$ has a 3-cut of size at least
    $$\frac{2}{9}m+\Omega\left(\frac{m^{3/4}}{(\log m)^2}\right).$$
\end{theorem}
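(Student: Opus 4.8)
The plan is to reduce the theorem to a statement about the \emph{energy} of a random auxiliary graph, and then apply the energy--MaxCut bound stated in the introduction. For $A\subs V(H)$, write $B=V(H)\setminus A$ and let $G_A$ be the graph on $B$ in which $yz$ is an edge precisely when $\{x,y,z\}\in E(H)$ for some $x\in A$. Linearity of $H$ enters in two ways: each pair $yz$ lies in at most one edge of $H$, so $G_A$ is a \emph{simple} graph; and each link $L_x=\{yz:\{x,y,z\}\in E(H)\}$ is a \emph{matching} of size $d_H(x)$. For any partition $(V_2,V_3)$ of $B$, the $3$-cut $(A,V_2,V_3)$ of $H$ has exactly as many edges as the cut $(V_2,V_3)$ of $G_A$ (map a rainbow edge $\{x,y,z\}$ with $x\in A$ to the cut edge $yz$), so $\mc_3(H)\ge\mc(G_A)=\tfrac12|E(G_A)|+\surp(G_A)$ for every $A$. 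Choosing $A$ at random with each vertex included independently with probability $\tfrac13$, an edge of $H$ contributes to $G_A$ with probability $\tfrac49$, so $\mathbb{E}|E(G_A)|=\tfrac49 m$, and therefore
\[
\mc_3(H)\ \ge\ \tfrac29 m+\mathbb{E}_A\,\surp(G_A)\ \ge\ \tfrac29 m+\Omega\!\left(\frac{\mathbb{E}_A\,\mathcal{E}(G_A)}{\log m}\right).
\]
It thus suffices to show $\mathbb{E}_A\,\mathcal{E}(G_A)=\Omega(m^{3/4}/\log m)$.

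For this the key is a decomposition of $A_{G_A}$. Let $\partial H$ denote the shadow graph of $H$, which is simple with $3m$ edges, and write $\partial H[B]$ for its induced subgraph on $B$. With $P_B$ the diagonal $0/1$ projection onto $B$, one checks $A_{G_A}=P_B\bigl(\sum_x\mathbf{1}[x\in A]\,A_{L_x}\bigr)P_B$; since $\sum_x A_{L_x}=A_{\partial H}$, centering gives
\[
A_{G_A}\ =\ \tfrac13\,A_{\partial H[B]}\ +\ E,\qquad E:=P_B\Bigl(\textstyle\sum_x\xi_x A_{L_x}\Bigr)P_B,\quad \xi_x=\mathbf{1}[x\in A]-\tfrac13.
\]
The summands $\xi_x A_{L_x}$ are independent, mean-zero, of operator norm $\le\tfrac23$, and---crucially because each link is a matching---$A_{L_x}^2$ is the diagonal indicator of $V(L_x)$, so $\sum_x\mathbb{E}[\xi_x^2]\,A_{L_x}^2=\tfrac29\,\mathrm{diag}(2d_H(v))$ has operator norm $\tfrac49\Delta(H)$, where $\Delta(H)$ is the maximum degree. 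By the matrix Bernstein inequality, with high probability $\|E\|\le\|\sum_x\xi_x A_{L_x}\|=O(\sqrt{\Delta(H)\log m})$.

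Now truncate the spectrum of $A_{G_A}$. Set $k=\Theta(m/(\Delta(H)\log m))$ with a small implied constant, and condition on the high-probability events $\|E\|=O(\sqrt{\Delta(H)\log m})$, $|E(G_A)|\ge(\tfrac49-\eps)m$, and $|E(\partial H[B])|\le(\tfrac43+\eps)m$. Since $\sum_i\lambda_i(\partial H[B])^2=2|E(\partial H[B])|$, at most $k$ eigenvalues of $\tfrac13 A_{\partial H[B]}$ exceed $\tfrac13\sqrt{2|E(\partial H[B])|/k}$ in absolute value; by Weyl's inequality, all but the largest $k$ and smallest $k$ eigenvalues of $A_{G_A}$ have absolute value at most $\Lambda:=\tfrac13\sqrt{2|E(\partial H[B])|/k}+\|E\|=O(\sqrt{\Delta(H)\log m})$. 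Weyl's inequality also bounds the sum of squares of the extreme $2k$ eigenvalues of $A_{G_A}$ by $\tfrac49|E(\partial H[B])|+O(k\|E\|^2)$, which a short computation shows is at most $2|E(G_A)|-cm$ for an absolute $c>0$ (using the choice of $k$). Hence
\[
\mathcal{E}(G_A)\ \ge\ \sum_{i=k+1}^{n-k}|\lambda_i(A_{G_A})|\ \ge\ \frac{\sum_{i=k+1}^{n-k}\lambda_i(A_{G_A})^2}{\Lambda}\ \ge\ \frac{cm}{\Lambda}\ =\ \Omega\!\left(\frac{m}{\sqrt{\Delta(H)\log m}}\right),
\]
and since this holds with probability $\ge\tfrac12$, $\mathbb{E}_A\,\mathcal{E}(G_A)=\Omega(m/\sqrt{\Delta(H)\log m})$. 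When $\Delta(H)\le\sqrt m$ this is $\Omega(m^{3/4}/\sqrt{\log m})$, giving $\surp_3(H)=\Omega(m^{3/4}/(\log m)^{3/2})$ via the reduction---stronger than claimed.

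The remaining regime $\Delta(H)>\sqrt m$ is, I expect, the main obstacle. The idea is to peel off vertices of maximum degree one at a time until the remaining $3$-graph $H'$ has maximum degree $\le\sqrt m$ (at most $3\sqrt m$ vertices are removed). If $H'$ still has $\ge m/2$ edges, apply the argument above to $H'$, then re-insert the deleted vertices in reverse order, colouring each new vertex so as to maximise the number of rainbow edges through it: provided the colouring of $H'$ is suitably balanced, a re-inserted vertex of degree $d$ recovers at least $\tfrac29 d-o(d)$ rainbow edges, so the baseline $\tfrac29 m$ is not eroded and the surplus of $H'$ survives; if instead most edges are incident to the deleted high-degree vertices, one argues directly with these. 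Making the re-insertion lossless---which needs control on the balance of the colourings produced by the main argument---and the book-keeping required to absorb the extra logarithmic factors is where the real work lies; the heart of the argument is the decomposition $A_{G_A}=\tfrac13 A_{\partial H[B]}+E$ with $\|E\|$ controlled via the fact that linearity forces every link to be a matching.
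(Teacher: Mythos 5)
Your core argument for the bounded--degree regime is a genuinely different route from the paper and I believe it is essentially correct. The key decomposition $A_{G_A}=\tfrac13A_{\partial H[B]}+E$ with $E=P_B\bigl(\sum_x\xi_x A_{L_x}\bigr)P_B$, the observation that linearity makes each $A_{L_x}$ a matching so $\sum_x A_{L_x}^2=\mathrm{diag}(2d_H(v))$, the matrix--Bernstein bound $\|E\|=O(\sqrt{\Delta(H)\log m})$, and the Frobenius/Weyl truncation yielding $\mathcal{E}(G_A)=\Omega(m/\sqrt{\Delta(H)\log m})$ all check out, and the accounting $\tfrac49|E(\partial H[B])|\approx\tfrac{16}{27}m<\tfrac{24}{27}m\approx2|E(G_A)|$ leaves exactly the positive residual you need. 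In fact you save a $\sqrt{\log m}$ factor over the paper in this regime. The paper instead works with the underlying edge-coloured multigraph, conditions on part of the random sample $\mathbf{X}$ to split the adjacency matrix into a fixed part $M$ and a random part $\mathbf{B}$, and proves a case analysis (Lemma \ref{lemma:ultimate}) on whether $\mathcal{E}(M+pA)$ or $\mathcal{E}(\mathbf{B}-pA)$ is large. Your unconditioned decomposition is slicker for the linear case, but it is less modular: the paper's route simultaneously handles arbitrary codegree $D$ and feeds directly into the non-linear Theorem \ref{thm:main_cut}.

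The gap is exactly where you flag it: the regime $\Delta(H)>\sqrt m$. Your proposed fix --- peel off high-degree vertices, apply the argument to $H'$, then re-insert --- does not work as stated, for two reasons. First, it is \emph{not} true in general that $\surp_3(H)\geq\surp_3(H')$ for an induced subhypergraph $H'$, so ``apply the argument to $H'$'' does not automatically transfer. Second, re-inserting a vertex $v$ of degree $d$ recovers $\tfrac29d-o(d)$ rainbow edges only if, in the optimal colouring of $H'$, roughly $\tfrac23d$ of the pairs in the link of $v$ are split between two parts; an arbitrary (near-)optimal $3$-colouring of $H'$ has no reason to be balanced on the link of a deleted vertex, and you give no mechanism to enforce this. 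The clean way around both problems, and the one the paper uses, is never to remove vertices from $H$ at all: keep the random sample $A$ over the entire vertex set $V(H)$ (so the baseline $\tfrac29m$ is automatic), and only localise the \emph{spectral} analysis by passing to $\surp(G_A)\geq\surp(G_A[S])$ (Lemma \ref{lemma:surplus_sum}) for a deterministic set $S$ of low-degree vertices. The remaining issue is ensuring $G_A[S]$ carries $\Omega(m)$ edges. Choose the degree threshold as $c\sqrt m$ for a large constant $c$, so the high-degree set $T$ has $|T|\leq O(\sqrt m/c)$ and, by linearity, $e(G[T])\leq\binom{|T|}{2}=O(m/c^2)$ is small. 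Then either $e_G(S,T)$ is small, in which case $e(G[S])=\Omega(m)$ and the argument goes through on $S$; or $e_G(S,T)=\Omega(m)$, in which case the bipartition $(S,T)$ already witnesses $\surp(G)=\Omega(m)$, and Lemma \ref{lemma:underlying} converts this into $\surp_3(H)=\Omega(m)$. This dichotomy replaces your re-insertion entirely, and it is the piece your proposal is missing.
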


 In case the edges of the $n$ vertex $3$-graph $H$ are included independently with probability $n^{-1}$, then $H$ is close to being linear in the sense that no pair of vertices is contained in more than $O(\log n)$ edges with positive probability.  Also, it is not hard to show that $\surp(H)=O(m^{3/4})$ with high probability, see Section \ref{sect:prob} for a detailed argument. We believe that more sophisticated random 3-graph models can produce linear hypergraphs with similar parameters, however, we do not pursue this direction. 
 
Theorems \ref{thm:main_cut} and \ref{thm:main_cut_linear} can be deduced, after a bit of work, from the following general bound on the surplus of 3-graphs, which takes into account the maximum degree, and maximum co-degree.

\begin{theorem}\label{thm:main}
 Let $H$ be a 3-uniform  multi-hypergraph. If $H$ contains an induced subhypergraph with $m$ edges, maximum degree $\Delta$ and maximum co-degree $D$, then
    $$\surp_3(H)\geq \Omega\left(\frac{m}{\sqrt{\Delta D}(\log m)^2}\right).$$
\end{theorem}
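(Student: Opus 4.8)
The plan is to reduce $3$-cuts of $H$ to ordinary cuts of a random auxiliary multigraph and then apply the energy--MaxCut inequality $\mc(G)\ge\tfrac12 e(G)+\Omega\big(\mathcal E(G)/\log e(G)\big)$ announced in the abstract. It suffices to prove the bound when $H$ itself has $m$ edges, maximum degree $\Delta$ and maximum co-degree $D$; the general case follows by extending a near-optimal colouring of the subhypergraph's vertex set to $V(H)$, a routine step I omit. Set $p=\tfrac13$ and let $V_1\subseteq V(H)$ be random, each vertex included independently with probability $p$. Let $G_{V_1}$ be the multigraph on $V(H)\setminus V_1$ in which $\{b,c\}$ gets one edge for each hyperedge $\{a,b,c\}\in H$ with $a\in V_1$ and $b,c\notin V_1$, and let $\mu_{bc}=\mu_{bc}(V_1)$ be the multiplicity it acquires. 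A hyperedge can be rainbow in a $3$-cut whose first part is $V_1$ only when it has exactly one vertex in $V_1$, and then it is rainbow precisely when $G_{V_1}$ cuts its other two vertices; hence $\mc_3(H)\ge\mathbb E_{V_1}[\mc(G_{V_1})]$. Since a hyperedge contributes to $G_{V_1}$ with probability $3p(1-p)^2=\tfrac49$, we get $\mathbb E[e(G_{V_1})]=\tfrac49 m$, and because $e(G_{V_1})\le m$ always,
\[
\surp_3(H)\ \ge\ \mathbb E_{V_1}\!\Big[\mc(G_{V_1})-\tfrac12 e(G_{V_1})\Big]\ \ge\ \Omega\!\left(\frac{\mathbb E_{V_1}[\mathcal E(G_{V_1})]}{\log m}\right),
\]
(bounded $m$ being trivial). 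Thus it remains to prove $\mathbb E_{V_1}[\mathcal E(G_{V_1})]=\Omega\big(m/(\sqrt{\Delta D}\log m)\big)$.

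For this I would work with the \emph{centred} matrix $M=M_{V_1}$, with $M_{bc}=\mathbf 1[b,c\notin V_1](\mu_{bc}-p\Gamma_{bc})$, where $\Gamma$ is the co-degree matrix of $H$ ($\Gamma_{bc}$ = number of hyperedges through both $b$ and $c$); then $A(G_{V_1})=M+R$, with $R$ equal to $p\Gamma$ after deleting the rows and columns indexed by $V_1$. The point of the centring is that $R$ carries a large, rank-heavy ``signal'' that must be removed, and this is done by playing two lower bounds off each other. First, testing $A(G_{V_1})$ against the \emph{fixed} sign matrix of $\Gamma$ and using $\mathbb E_{V_1}[A(G_{V_1})]=(1-p)^2 p\,\Gamma$ gives $\mathbb E_{V_1}[\mathcal E(G_{V_1})]\ge(1-p)^2 p\,\mathcal E(\Gamma)$. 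Second, the triangle inequality for the trace norm together with $\mathcal E(R)\le p\,\mathcal E(\Gamma)$ (as $R$ is a principal submatrix of $p\Gamma$) gives $\mathbb E_{V_1}[\mathcal E(G_{V_1})]\ge\mathbb E_{V_1}[\mathcal E(M_{V_1})]-p\,\mathcal E(\Gamma)$. Eliminating $\mathcal E(\Gamma)$ between these yields $\mathbb E_{V_1}[\mathcal E(G_{V_1})]=\Omega\big(\mathbb E_{V_1}[\mathcal E(M_{V_1})]\big)$, so it is enough to bound the energy of the centred matrix from below.

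To do that I would combine the elementary spectral inequality $\mathcal E(B)\ge c\,\|B\|_F^3/\|B^2\|_F$ for symmetric $B$ — proved by thresholding the eigenvalues of $B$ at a suitable level and applying Cauchy--Schwarz to $\sum_i\lambda_i^2=\|B\|_F^2$ — with the convexity of $(x,y)\mapsto x^{3/2}y^{-1/2}$ on $(0,\infty)^2$, which by Jensen gives $\mathbb E_{V_1}[\mathcal E(M_{V_1})]\ge c\,(\mathbb E_{V_1}\|M_{V_1}\|_F^2)^{3/2}(\mathbb E_{V_1}\|M_{V_1}^2\|_F^2)^{-1/2}$. Here $\mathbb E_{V_1}\|M_{V_1}\|_F^2=\sum_{b\ne c}(1-p)^2\Var\!\big(\mathrm{Bin}(\Gamma_{bc},p)\big)=p(1-p)^3\sum_{b\ne c}\Gamma_{bc}=6p(1-p)^3 m=\Omega(m)$, since conditionally on $b,c\notin V_1$ the multiplicity $\mu_{bc}$ is a sum of $\Gamma_{bc}$ independent $\mathrm{Bernoulli}(p)$'s. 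The crux is the fourth-moment estimate $\mathbb E_{V_1}\|M_{V_1}^2\|_F^2=O\big(\Delta D\,m\,(\log m)^2\big)$: expanding $\|M^2\|_F^2$ as a weighted count of closed walks $b\to c\to b'\to c'\to b$ and taking expectations, the dominant ``signal'' contribution $\sim p^4\tr(\Gamma^4)$, which is of order $\Delta^2\|\Gamma\|_F^2$ and hence too large by about a factor $\Delta$, is annihilated by the centring; the surviving terms are controlled by the entrywise bound $\Gamma_{bc}\le D$, the operator-norm bound $\|\Gamma\|_{\mathrm{op}}\le 3\Delta$ (immediate from $-\Delta\|v\|^2\le v^{\top}\Gamma v\le 3\Delta\|v\|^2$ for all $v$), and $\sum_b\deg_H(b)^2\le 3\Delta m$, with a dyadic split of the pairs by the size of $\Gamma_{bc}$ accounting for the polylogarithmic factor. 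Plugging both moment bounds in gives $\mathbb E_{V_1}[\mathcal E(M_{V_1})]=\Omega\big(m/(\sqrt{\Delta D}\log m)\big)$, hence $\mathbb E_{V_1}[\mathcal E(G_{V_1})]$ and finally $\surp_3(H)$ are $\Omega\big(m/(\sqrt{\Delta D}(\log m)^2)\big)$.

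The main obstacle is exactly this fourth-moment computation: one must expand $\mathbb E_{V_1}\|M_{V_1}^2\|_F^2$ into its many walk-types (degenerate and genuine $4$-cycles, and the mixed cases), verify that the centring kills precisely the over-large $\tr(\Gamma^4)$-type term while every surviving term meets the target $O(\Delta D m\cdot\mathrm{polylog})$, and keep the dyadic bookkeeping lossless up to polylogarithmic factors. Everything else — the reduction to a multigraph cut, the two trace-norm inequalities that strip off the low-rank part, the spectral bound $\mathcal E(B)\gtrsim\|B\|_F^3/\|B^2\|_F$, and the $\Omega(m)$ lower bound on $\mathbb E_{V_1}\|M_{V_1}\|_F^2$ — is comparatively routine.
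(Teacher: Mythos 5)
Your plan is correct, and it follows a genuinely different route from the paper. The paper fixes a deterministic set $Q$, conditions on the sampled vertices outside $Q$, and treats only the colours in $Q$ as randomness; it then applies Oliveira's matrix Bernstein inequality to bound the operator norm $\|pA-\mathbf{B}\|$ with high probability, and converts this into an energy lower bound via $\|pA-\mathbf{B}\|_F^2\le \|pA-\mathbf{B}\|\cdot\mathcal E(pA-\mathbf{B})$. You instead centre the whole matrix at once, strip off the rank-heavy $p\Gamma$ signal by playing the dual/test-matrix lower bound against the trace-norm triangle inequality (a clean substitute for the case analysis in the paper's Lemma~\ref{lemma:ultimate}), and then use $\mathcal E(B)\ge\|B\|_F^3/\|B^2\|_F$ together with H\"older/Jensen to reduce everything to second and fourth Frobenius moments. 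All of these steps check out; the $0/0$ issue in the Jensen step is avoided cleanly by running H\"older directly on $\mathbb E\|M\|_F^2\le\mathbb E\big[\mathcal E(M)^{2/3}\|M^2\|_F^{2/3}\big]\le(\mathbb E\,\mathcal E(M))^{2/3}(\mathbb E\|M^2\|_F^2)^{1/3}$.

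The one part you flagged as the ``main obstacle'' is in fact easier than you anticipate, and needs no dyadic bookkeeping nor any $\log$ factor. Write $\tilde M=\sum_a \xi_a A_a$ with $\xi_a=\mathbf 1[a\in V_1]-p$ and $A_a$ the link matrix of $a$, so that your $M$ is exactly the principal submatrix of $\tilde M$ indexed by $V(H)\setminus V_1$. By pinching, $\tr(M^4)\le\tr(\tilde M^4)$ holds deterministically, which decouples the row/column indicators from the centred variables; then, since the $\xi_a$ are independent, centred, and uniformly bounded, expanding $\mathbb E\,\tr(\tilde M^4)$ leaves only the paired terms, giving $\mathbb E\,\tr(\tilde M^4)\lesssim\sum_a\tr(A_a^4)+\sum_{a\neq b}\tr(A_a^2A_b^2)\lesssim\tr\big((\sum_a A_a^2)^2\big)$. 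The two ingredients $\|\sum_a A_a^2\|_{\mathrm{op}}\le\Delta D$ and $\tr(\sum_a A_a^2)=O(m)$ are exactly what the paper already establishes in Lemma~\ref{lemma:colored_sampling} via its closed-walk count, so $\mathbb E\|M^2\|_F^2=O(\Delta D\, m)$. Plugging this in, your route actually yields $\mathbb E_{V_1}\mathcal E(G_{V_1})=\Omega(m/\sqrt{\Delta D})$ with no logarithm, hence $\surp_3(H)=\Omega\big(m/(\sqrt{\Delta D}\log m)\big)$ — a $\log m$ improvement over the paper's $(\log m)^2$. The only piece you should be more careful about is the throwaway claim that the general case ``follows by extending a near-optimal colouring of the subhypergraph'': that extension is not automatic (edges with two vertices inside $S_0$ and one outside need not be rainbow with probability $2/9$), but your framework handles the general case directly if, as in the paper, you sample $V_1\subseteq V(H)$, let $(Y,Z)$ maximise the cut of the full multigraph $G_{V_1}$ on $V(H)\setminus V_1$, and invoke the superadditivity of graph surplus (Lemma~\ref{lemma:surplus_sum}) to restrict the energy argument to the portion over $S_0\setminus V_1$.
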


 Consider the random 3-graph on $n$ vertices, in which each edge is chosen independently with probability $p$, where $\log n/n^2\ll p\ll 1/n$. With positive probability, this hypergraph satisfies $\Delta=\Theta(pn^2)$, $D=O(\log n)$, $m=\Theta(pn^3)$ and $\surp_3(H)=O(\sqrt{p}n^2)$, see Section \ref{sect:prob} for a detailed argument. The upper bound on the surplus of this 3-graph coincides with the lower bound of Theorem \ref{thm:main} up to logarithmic factors, showing that our theorem is tight for a large family of parameters.

\bigskip
 
 Now let us consider $k$-cuts in $r$-graphs for $r\geq 4$ and $2\leq k\leq r$. In this case, Conlon, Fox, Kwan, and Sudakov \cite{CFKS} established the same lower bound  $\surp_k(H)=\Omega_r(m^{5/9})$. From above, they showed that if $H$ is the random $r$-graph on $n$ vertices, where each edge is included with probability $p=\frac{1}{2}n^{-(r-3)}$, then $\surp_k(H)=O_r(m^{2/3})$ with high probability. This disproved the conjecture of Scott \cite{Scott} that complete $r$-graphs minimize the maximum 2-cut. Moreover, it is shown in \cite{CFKS} that any general lower bound one gets for the 3-cut problem for 3-graphs extends for the $k$-cut problem for $r$-graphs if  $k=r-1$ or $r$. In particular, we achieve the following improvements in these cases.

 \begin{theorem}\label{thm:rk}
 Let $r\geq 4$ and $k\in \{r-1,r\}$. Let $H$ be an $r$-graph with $m$ edges. Then
    $$\surp_k(H)=\Omega_r\left(\frac{m^{3/5}}{(\log m)^2}\right).$$
 Moreover, if $H$ is linear, then
     $$\surp_k(H)=\Omega_r\left(\frac{m^{3/4}}{(\log m)^2}\right).$$
 \end{theorem}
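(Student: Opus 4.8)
The plan is to bypass the hypergraph machinery entirely and \emph{reduce} the problem to the $3$-uniform, $3$-cut case, then quote Theorems~\ref{thm:main_cut} and~\ref{thm:main_cut_linear}. This is exactly the content of the reduction of Conlon, Fox, Kwan and Sudakov referred to above: for $k\in\{r-1,r\}$ and every $r$-graph $H$ with $m$ edges, it produces a $3$-uniform instance $G$ with $\Theta_r(m)$ edges such that $\surp_k(H)\ge\Omega_r(\surp_3(G))$, and $G$ is linear whenever $H$ is. Plugging in $\surp_3(G)=\Omega\bigl(e(G)^{3/5}/(\log e(G))^2\bigr)$ from Theorem~\ref{thm:main_cut}, and $\surp_3(G)=\Omega\bigl(e(G)^{3/4}/(\log e(G))^2\bigr)$ from Theorem~\ref{thm:main_cut_linear} in the linear case, then yields the theorem, because both functions $m\mapsto m^{3/5}/(\log m)^2$ and $m\mapsto m^{3/4}/(\log m)^2$ are increasing for large $m$ and satisfy $f(cm)=\Omega_c(f(m))$, so the implicit constants in $\Theta_r(\cdot)$ only cost another constant, while for bounded $m$ the statement is vacuous.

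Let me record the reduction in the clean case $k=r$ (the case $k=r-1$ is handled along the same lines, see below). Draw a random partition $V(H)=W_1\cup\dots\cup W_{r-3}\cup U$, placing each vertex into $W_i$ with probability $1/r$ for $1\le i\le r-3$ and into $U$ with probability $3/r$, independently, and let $G$ be the $3$-graph on $U$ whose edges are the triples $e\cap U$ over those $e\in H$ with $|e\cap W_i|=1$ for all $i$; if $H$ is linear then so is $G$, since the edges of $G$ are restrictions of edges of $H$ that pairwise meet in at most one vertex. For \emph{any} outcome of the partition, taking an optimal $3$-cut $(U_1,U_2,U_3)$ of $G$ together with $W_1,\dots,W_{r-3}$ gives an $r$-partition of $V(H)$ in which an edge of $H$ is rainbow exactly when its restriction to $U$ is a rainbow edge of $G$; hence $\mc_r(H)\ge\mc_3(G)$. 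A short computation gives $\Pr[e\in G]=\tfrac{r!}{6}\cdot\tfrac{27}{r^r}=\tfrac{9\,r!}{2\,r^r}$ for each fixed edge $e$, so $\mathbb{E}\bigl[\tfrac{2}{9}e(G)\bigr]=\tfrac{r!}{r^r}m$, which is precisely the random-cut expectation subtracted in the definition of $\surp_r$. Thus $\surp_r(H)\ge\mc_3(G)-\tfrac{r!}{r^r}m$ for every outcome, and taking expectations,
\[
\surp_r(H)\ \ge\ \mathbb{E}\left[\mc_3(G)-\tfrac{2}{9}e(G)\right]\ =\ \mathbb{E}\left[\surp_3(G)\right].
\]
Since $e(G)\le m$ and $\mathbb{E}[e(G)]=\tfrac{9r!}{2r^r}m=:\alpha_r m$, a reverse Markov inequality gives $\Pr[e(G)\ge\alpha_r m/2]\ge\beta_r$ for some constant $\beta_r=\beta_r(r)>0$; on this event Theorem~\ref{thm:main_cut} (respectively Theorem~\ref{thm:main_cut_linear}) gives $\surp_3(G)=\Omega_r(m^{3/5}/(\log m)^2)$ (respectively $\Omega_r(m^{3/4}/(\log m)^2)$), using $e(G)\le m$ so that $\log e(G)\le\log m$, and as $\surp_3(G)\ge 0$ always, the displayed expectation is at least $\beta_r$ times this. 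This settles $k=r$.

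For $k=r-1$ a cut edge of an $(r-1)$-partition has exactly one ``doubled'' part, so the natural first move is to pass to the $(r-1)$-uniform shadow $\partial^{(r-1)}H$, the family of all $(r-1)$-subsets of edges of $H$ taken with multiplicity: every $(r-1)$-partition cuts exactly twice as many edges of $\partial^{(r-1)}H$ as of $H$, and the random-cut baselines match up as well, so $\surp_{r-1}(H)=\tfrac12\surp_{r-1}(\partial^{(r-1)}H)$; now $\partial^{(r-1)}H$ is an $(r-1)$-graph on which one seeks a \emph{rainbow} $(r-1)$-cut, which is then handled exactly as the $k=r$ case (peeling off $r-4$ parts). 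When $H$ is linear, $\partial^{(r-1)}H$ is no longer linear, but two of its edges lying in a common edge of $H$ meet in $r-2$ vertices while two lying in different edges of $H$ meet in at most one, so its relevant co-degrees stay bounded by $2$, which propagates through the peeling; one then invokes the co-degree-sensitive bound of Theorem~\ref{thm:main} (as in the deduction of Theorem~\ref{thm:main_cut_linear}) to recover the exponent $3/4$. Either way one lands on a $3$-uniform $3$-cut instance with $\Theta_r(m)$ edges, and Theorems~\ref{thm:main_cut} and~\ref{thm:main_cut_linear} finish the argument.

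The step I expect to demand the most care is precisely the bookkeeping in the $k=r-1$ case: one has to arrange that the reduced $3$-uniform instance simultaneously keeps $\Theta_r(m)$ edges, keeps the correct random-cut baseline (so the surplus gain is not swallowed by a mismatch), and stays linear — or at least has bounded co-degree — when $H$ is linear. Checking that the shadow and peeling steps compose while preserving all three properties, and in particular that the bounded-co-degree version of Theorem~\ref{thm:main_cut_linear} really goes through for the shadow, is the substantive point; the rest is the elementary averaging above together with a black-box application of Theorems~\ref{thm:main_cut} and~\ref{thm:main_cut_linear}.
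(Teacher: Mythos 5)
Your proof is correct and takes a genuinely different route from the paper's. The paper deduces this theorem by a single chain of inequalities through iterated shadows: it sets $H_r=H$ and $H_i$ equal to the underlying $i$-multigraph of $H_{i+1}$, then repeatedly applies Lemma~\ref{lemma:underlying} (which bounds $\surp_{i+1}(H_{i+1})$ by both $\surp_i(H_{i+1})$ and $\surp_i(H_i)$) to land on $\surp_3(H_3)$, where $H_3$ has $\binom{r}{3}(r-3)!\,m$ edges; in the linear case it notes that the pair co-degree of $H_3$ is $O_r(1)$ and quotes Theorem~\ref{thm:cut_codegree}. You instead perform a one-shot random ``peeling'' reduction: for $k=r$, randomly assign vertices to dummy parts $W_1,\dots,W_{r-3}$ and keep the restrictions $e\cap U$ of the edges hitting each $W_i$ exactly once, which directly produces a \emph{linear} 3-graph (since two edges sharing a triple would contradict linearity of $H$), so you can invoke Theorem~\ref{thm:main_cut_linear} as a black box rather than the co-degree version. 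This is arguably cleaner for $k=r$: the paper's iterated shadow immediately destroys linearity and forces the co-degree route. For $k=r-1$ you shadow once and then peel; your observation that the co-degrees of the resulting 3-graph stay $O_r(1)$ (because linearity of $H$ forces all relevant $(r-1)$-subsets contributing to a given pair to come from a single edge of $H$) is exactly the point that also underlies the paper's use of Theorem~\ref{thm:cut_codegree}, and your sketch of it is correct, if compressed. The averaging bookkeeping (matching the random-cut baselines, $\mathbb{E}[e(G)]=\alpha_r m$, reverse Markov, monotonicity of $m\mapsto m^{3/5}/(\log m)^2$) is all sound. One remark: the paper's written proof of this theorem only treats $k=r$ explicitly and leaves $k=r-1$ to the reader via the second inequality of Lemma~\ref{lemma:underlying}, so in that respect your treatment is actually a bit more complete.
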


 \bigskip

\noindent
\textbf{Proof overview.} Let us give a brief outline of the proof of Theorem \ref{thm:main}, from which all other theorems are deduced. Our approach is fairly different from that of Conlon, Fox, Kwan, and Sudakov \cite{CFKS}, which is based on probabilistic and combinatorial ideas. We use a combination of probabilistic and spectral techniques to bound the surplus.

First, we show that the surplus in a (multi-)graph can be lower bounded by the \emph{energy of the graph}, i.e.\ the sum of absolute values of eigenvalues of the adjacency matrix. This quantity is extensively studied in spectral graph theory, originally introduced in theoretical chemistry. We follow the ideas of R\"aty, Sudakov, and Tomon \cite{RST} to estimate the surplus by a semidefinite program. Then, we construct  a positive semidefinite matrix based on the spectral decomposition of the adjacency matrix of the graph to  show that the energy is a lower bound for the corresponding program. This can be found in Section \ref{sect:energy}. See also the recent result of Sudakov and Tomon \cite{logrank} on the Log-rank conjecture for a similar idea executed.

Now let $H$ be a 3-uniform hypergraph for which we wish to find a large 3-cut. First, we sample a third of the vertices randomly, and write $\mathbf{X}$ for the set of sampled vertices. For each $e\in E(H)$ that contains exactly one element of $\mathbf{X}$, we remove this one element, and set $G^*$ to be the multi-graph of such edges. We argue that the expected energy of $G^*$ is large. In order to do this, we view $H$ as a colored multi-graph $G$, in which each pair of vertices $\{u,v\}$ is included  as many times as it appears in an edge $\{u,v,w\}$ of $H$, and whose color is the third vertex $w$. Let $G_1^*$ be the subgraph of $G$ in which we keep those edges whose color is in $\mathbf{X}$. Then $G^*$ is an induced subgraph of $G_1^*$, so by interlacing, it inherits certain spectral properties of $G^*_1$. Using matrix concentration inequalities, we show that if we randomly sample the colors in a colored multi-graph $G$, the resulting graph $G_1^*$ is a good spectral approximation of the original graph, assuming each color class is a low-degree graph. This, surprisingly, ensures that the energy of $G^*_1$ is large with high probability, even if $G$ had small energy. An explanation for this and the detailed argument can be found in Section \ref{sect:colored}.

Now if $G^*$ has  large energy, it has a large cut $\mathbf{Y}\cup \mathbf{Z}$ as well. We conclude that $\mathbf{X}\cup\mathbf{Y}\cup \mathbf{Z}$ is a partition of the vertex set of $H$ with the required number of edges. We unite all parts of our argument and prove Theorems \ref{thm:main_cut}, \ref{thm:main_cut_linear}, \ref{thm:main}, and \ref{thm:rk} in Section \ref{sect:proofs}.

 \section{Preliminaries}
In this section, we introduce the notation used throughout our paper, which is mostly conventional, and present some basic results. 

Given a vector $x\in \mathbb{R}^n$, we write $||x||=||x||_2$ for the Euclidean norm. If $A\in \mathbb{R}^{n\times n}$ is a symmetric matrix, let $\lambda_i(A)$ denote the $i$-th largest eigenvalue of $A$. The \emph{spectral radius} of $A$ is denoted by $||A||$, and it can be defined in a number of equivalent ways: 
$$||A||=\max\{|\lambda_1(A)|,|\lambda_n(A)|\}=\max_{x\in \mathbb{R}^n, ||x||=1} ||Ax||.$$
The \emph{Frobenius norm} of $A$ is defined as
 $$||A||_F^2:=\langle A,A\rangle=\tr(A^2)=\sum_{i=1}^{n}\lambda_i(A)^2,$$
 where $\langle A,B\rangle=\sum_{i,j}A(i,j)B(i,j)$ is the entrywise scalar product on the space of matrices.
 
We make use of two classical results about the eigenvalues of symmetric matrices, Weyl's inequality and the Cauchy interlacing theorem.

\begin{lemma}[Weyl's inequality \cite{Weyl}]\label{Weyl}
 Let $A,B\in \mathbb{R}^{n\times n}$  be symmetric matrices. If $i+j-1\leq n$, then 
 $$\lambda_{i+j-1}(A+B)\leq \lambda_i(A)+\lambda_j(B),$$
 and if $i+j>n$, then
 $$\lambda_i(A)+\lambda_j(B)\leq \lambda_{i+j-n}(A+B).$$
 In particular, 
 $$|\lambda_k(A+B)-\lambda_k(A)|\leq ||B||.$$
 \end{lemma}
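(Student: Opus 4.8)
The natural route is through the Courant--Fischer min-max characterization: for any symmetric $M\in\mathbb{R}^{n\times n}$ and any $k$,
$$\lambda_k(M)=\min_{\dim V=n-k+1}\ \max_{x\in V,\ ||x||=1} x^T M x,$$
where the minimum ranges over linear subspaces $V\subseteq\mathbb{R}^n$. First I would establish the inequality $\lambda_{i+j-1}(A+B)\le\lambda_i(A)+\lambda_j(B)$ in the regime $i+j-1\le n$. Let $U$ be the span of eigenvectors of $A$ associated to the eigenvalues $\lambda_i(A),\dots,\lambda_n(A)$, so that $\dim U=n-i+1$ and $x^T A x\le \lambda_i(A)||x||^2$ for every $x\in U$; symmetrically let $W$ be the span of eigenvectors of $B$ for $\lambda_j(B),\dots,\lambda_n(B)$, so $\dim W=n-j+1$ and $x^T B x\le\lambda_j(B)||x||^2$ on $W$. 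Then $\dim(U\cap W)\ge (n-i+1)+(n-j+1)-n=n-(i+j-1)+1$, which is exactly the subspace dimension appearing in the min-max formula for index $i+j-1$. Restricting to a subspace $V\subseteq U\cap W$ of that dimension and plugging it into the displayed identity gives $\lambda_{i+j-1}(A+B)\le\max_{x\in V,\,||x||=1}\bigl(x^T A x+x^T B x\bigr)\le\lambda_i(A)+\lambda_j(B)$.

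The second inequality then follows by applying the first one to $-A$ and $-B$ and using $\lambda_k(-M)=-\lambda_{n+1-k}(M)$: setting $p=n+1-i$ and $q=n+1-j$, the hypothesis $p+q-1\le n$ becomes $i+j>n$, and unwinding the substitution converts $\lambda_{p+q-1}(-A-B)\le\lambda_p(-A)+\lambda_q(-B)$ into $\lambda_i(A)+\lambda_j(B)\le\lambda_{i+j-n}(A+B)$. Finally, the ``in particular'' estimate comes from two special cases: taking $j=1$ in the first inequality yields $\lambda_k(A+B)\le\lambda_k(A)+\lambda_1(B)\le\lambda_k(A)+||B||$, and taking $j=n$ in the second yields $\lambda_k(A+B)\ge\lambda_k(A)+\lambda_n(B)\ge\lambda_k(A)-||B||$; together these give $|\lambda_k(A+B)-\lambda_k(A)|\le||B||$.

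Since the statement is a classical one, there is no genuine obstacle; the only thing that requires care is the index bookkeeping --- verifying that $\dim(U\cap W)$ is at least the dimension demanded by the min-max formula for the index $i+j-1$, and correctly tracking how the hypotheses transform under the reflection $M\mapsto -M$ when passing from the upper to the lower bound. One could equally run the whole argument from the dual (maximum) form $\lambda_k(M)=\max_{\dim V=k}\min_{x\in V,\,||x||=1}x^T M x$ by a symmetric choice of subspaces; I would simply carry out the $i+j-1\le n$ case in full with whichever form keeps the signs cleanest, and deduce everything else by the reflection identity.
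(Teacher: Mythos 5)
Your argument is correct: it is the standard Courant--Fischer min-max proof of Weyl's inequality, with the index bookkeeping $\dim(U\cap W)\ge n-(i+j-1)+1$ and the reflection $\lambda_k(-M)=-\lambda_{n+1-k}(M)$ both handled properly, and the ``in particular'' clause correctly obtained from the special cases $j=1$ and $j=n$. Note that the paper itself does not prove this lemma --- it is stated as a classical result with a citation to Weyl's 1912 paper --- so there is no in-paper argument to compare against; your proof is the textbook one and would serve.
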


 \begin{lemma}[Cauchy interlacing theorem]
 Let $A\in \mathbb{R}^{n\times n}$ be a symmetric matrix, and let $B$ be an $N\times N$ sized principal submatrix of $A$. Then 
 $$\lambda_i(A)\geq \lambda_i(B)\geq \lambda_{i+(n-N)}(A).$$
 In particular,
 $$||B||\leq ||A||.$$
 \end{lemma}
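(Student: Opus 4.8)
The plan is to derive both displayed inequalities from the Courant--Fischer min--max characterization of the eigenvalues of a symmetric matrix, a standard classical fact: for symmetric $M\in\mathbb{R}^{d\times d}$,
$$\lambda_i(M)=\max_{\substack{V\subseteq\mathbb{R}^d\\ \dim V=i}}\ \min_{\substack{x\in V\\ x\neq 0}}\frac{x^{\top}Mx}{x^{\top}x}=\min_{\substack{V\subseteq\mathbb{R}^d\\ \dim V=d-i+1}}\ \max_{\substack{x\in V\\ x\neq 0}}\frac{x^{\top}Mx}{x^{\top}x}.$$
Since $B$ is a principal submatrix of $A$, after permuting coordinates we may assume $B$ is the restriction of $A$ to the coordinate set $\{1,\dots,N\}$. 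Identify $\mathbb{R}^N$ with the subspace $W=\{x\in\mathbb{R}^n:x_{N+1}=\dots=x_n=0\}$; the only point needed is that $x^{\top}Ax=\tilde x^{\top}B\tilde x$ and $\|x\|=\|\tilde x\|$ whenever $x\in W$ corresponds to $\tilde x\in\mathbb{R}^N$, so Rayleigh quotients of $A$ on subspaces of $W$ agree with Rayleigh quotients of $B$.

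For the upper bound $\lambda_i(B)\le\lambda_i(A)$, I would take an $i$-dimensional subspace $V\subseteq\mathbb{R}^N$ attaining the first (max--min) form for $B$, regard it as an $i$-dimensional subspace of $W\subseteq\mathbb{R}^n$, and note that its minimal Rayleigh quotient for $A$ equals the one for $B$; since $\lambda_i(A)$ is the supremum of this quantity over \emph{all} $i$-dimensional subspaces of $\mathbb{R}^n$, it is at least $\lambda_i(B)$. For the lower bound $\lambda_i(B)\ge\lambda_{i+(n-N)}(A)$, I would instead use the second (min--max) form: take an $(N-i+1)$-dimensional subspace $V\subseteq\mathbb{R}^N$ attaining it for $B$, view it inside $W$, and observe that $\lambda_{i+(n-N)}(A)$ is the infimum over all subspaces of $\mathbb{R}^n$ of dimension $n-(i+(n-N))+1=N-i+1$ of the maximal Rayleigh quotient, hence is at most the value realized by $V$, which is exactly $\lambda_i(B)$.

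Finally, the inequality $\|B\|\le\|A\|$ follows by specializing to $i=1$ and $i=N$: the upper bound gives $\lambda_1(B)\le\lambda_1(A)$ and $\lambda_N(B)\le\lambda_N(A)\le\lambda_1(A)$, while the lower bound gives $\lambda_1(B)\ge\lambda_{1+(n-N)}(A)\ge\lambda_n(A)$ and $\lambda_N(B)\ge\lambda_{N+(n-N)}(A)=\lambda_n(A)$. Thus both $\lambda_1(B)$ and $\lambda_N(B)$ lie in $[\lambda_n(A),\lambda_1(A)]$, so their absolute values are at most $\max\{|\lambda_1(A)|,|\lambda_n(A)|\}=\|A\|$, and $\|B\|=\max\{|\lambda_1(B)|,|\lambda_N(B)|\}\le\|A\|$.

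There is no real obstacle here; this is a textbook argument. The single thing to be careful about is bookkeeping the index shift $i\mapsto i+(n-N)$ in the lower bound, which is pinned down precisely by matching the dimension $N-i+1$ of the test subspace sitting inside $\mathbb{R}^N$ with the dimension $n-k+1$ appearing in the min--max formula for $\lambda_k(A)$, forcing $k=i+(n-N)$. An alternative would be to induct on $n-N$, reducing everything to the single-deletion case $N=n-1$ and the classical strict-interlacing statement, but the direct Courant--Fischer route handles all indices simultaneously and is cleaner, so I would use it.
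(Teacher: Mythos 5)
The paper states this lemma as a classical fact without giving any proof, so there is no "paper's own proof" to compare against. Your Courant--Fischer argument is the standard textbook derivation and it is correct: the embedding of $\mathbb{R}^N$ as the coordinate subspace $W\subseteq\mathbb{R}^n$ makes Rayleigh quotients of $B$ agree with those of $A$ restricted to $W$, the max--min form gives $\lambda_i(B)\le\lambda_i(A)$ by enlarging the feasible set of subspaces, and the min--max form with the dimension count $n-(i+(n-N))+1=N-i+1$ gives $\lambda_i(B)\ge\lambda_{i+(n-N)}(A)$. The deduction of $\|B\|\le\|A\|$ by trapping $\lambda_1(B)$ and $\lambda_N(B)$ in $[\lambda_n(A),\lambda_1(A)]$ is also correct (and in fact the interlacing inequalities trap every $\lambda_i(B)$ in that interval). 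Nothing further needed.
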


%Say that a function $f:\mathcal{X}_1\times\dots\times\mathcal{X}_n\rightarrow \mathbb{R}$ satisfies the \emph{bounded differences property} with $c_1,\dots,c_n$ if $|f(x_1,\dots,x_{i-1},x_i,x_{i+1},\dots,x_n)-f(x_1,\dots,x_{i-1},x_i',x_i,\dots,x_n)|\leq c_i$ for every $i\in [n]$ and $x_1\in \mathcal{X}_1,\dots,x_i,x_i'\in\mathcal{X}_i,\dots,x_n\in \mathcal{X}_n$.

%\begin{lemma}[McDiarmad's inequality]
%Let $f:\mathcal{X}_1\times\dots\times\mathcal{X}_n\rightarrow \mathbb{R}$ satisfy the bounded differences property with $c_1,\dots,c_n$, and let $X_i\in \mathcal{X}_i$ be independent random variables. Then for every $t>0$,
%$$\mathbb{P}(|f(X_1,\dots,X_n)-\mathbb{E}(f(X_1,\dots,X_n))|\geq t)\leq 2e^{-2t^2/(c_1^2+\dots+c_n^2)}.$$
%\end{lemma}

%We will apply the following corollary of this inequality.

%\begin{corollary}\label{cor:mcdiarmad}
%Let  $f:\mathcal{X}_1\times\dots\times\mathcal{X}_n\rightarrow \mathbb{R}$ satisfy the bounded differences property with $c_1,\dots,c_n$, and let $X_i\in \mathcal{X}_i$ be independent random variables. Let $M=\mathbb{E}(f(X_1,\dots,X_n))$, and assume that $\max_{i}c_i\leq \varepsilon M$ and $\sum_{i}c_i=O(M)$. Then for every $\alpha>0$, 
%$$\mathbb{P}(|f(X_1,\dots,X_n)-M|\geq \alpha M)=e^{-\Omega(\alpha^2/\varepsilon)}.$$
%\end{corollary}

%\begin{proof}
%This follows by noting that $\sum_i c_i^2\leq (\max_i c_i)\cdot(\sum_i c_i)$.
%\end{proof}

\subsection{Basics of cuts}

In this section, we present some basic results and definitions about cuts.

It will be convenient to work with $r$-uniform multi-hypergraphs $H$ (or $r$-multigraph for short). That is, we allow multiple edges on the same $r$ vertices. A \emph{$k$-cut} in $H$ is partition of $V(H)$ into $k$ parts $V_1,\dots,V_k$, together with all the edges that have at least one vertex in each of the parts. We denote by $e(V_1,\dots,V_k)$ the number of such edges.

\begin{definition}
    The Max-$k$-Cut of $H$ is the maximum size of a $k$-cut, and it is denoted by $\mc_k(H)$. 
\end{definition}

In case we consider a random partition, i.e.\ when each vertex of $H$ is assigned to one of $V_i$ independently with probability $1/k$, the expected size of a cut is $\frac{S(r,k)k!}{k^r}e(H)$, where $S(r,k)$ is the number of unlabelled partitions of $\{1,\dots,r\}$ into $k$ parts.

\begin{definition}
    The \emph{$k$-surplus} of $H$ is $$\surp_k(H)=\mc_k(H)-\frac{S(r,k)k!}{k^r}e(H).$$
\end{definition} 

\noindent
Clearly,  $\surp_k(H)$ is always nonnegative. Next, we recall several basic facts from \cite{CFKS}.

\begin{lemma}[Corollary 3.2 in \cite{CFKS}]\label{lemma:m/n}
Let $H$ be an $r$-multigraph on $n$ vertices. Then for every $2\leq k\leq r$, 
$$\surp_k(H)=\Omega_r\left(\frac{e(H)}{n}\right).$$
\end{lemma}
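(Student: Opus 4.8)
The plan is to show that the best \emph{nearly balanced} $k$-partition of $V(H)$ — one in which the part sizes differ by at most $1$ — already cuts at least $qm+\Omega_r(m/n)$ edges, where $m=e(H)$ and $q:=\frac{S(r,k)k!}{k^r}$ is the probability that a uniformly random map $[r]\to[k]$ is surjective. Since $\surp_k(H)=\mc_k(H)-qm$, this suffices; we may assume $n\ge r$, as otherwise $m=0$, and then a nearly balanced partition has all parts nonempty because $n\ge r\ge k$. So let $(V_1,\dots,V_k)$ be chosen uniformly at random among nearly balanced partitions. By linearity of expectation and $\mc_k(H)\ge\mathbb{E}[e(V_1,\dots,V_k)]$, it is enough to prove that for every $r$-set $e\subseteq V(H)$,
$$\Pr[e\text{ meets every part}]\ \ge\ q+\Omega_r(1/n).$$

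To estimate this, condition on the part sizes $n_1,\dots,n_k$, each equal to $n/k+O(1)$. Then the tuple of parts containing the $r$ vertices of $e$ is a sample of size $r$ drawn without replacement from an urn with $n_i$ balls of colour $i$, so inclusion--exclusion over the missing colours gives
$$\Pr[e\text{ meets every part}\mid n_1,\dots,n_k]=\sum_{S\subseteq[k]}(-1)^{|S|}\frac{\binom{n-\sum_{i\in S}n_i}{r}}{\binom{n}{r}},$$
whereas $q=\sum_{S\subseteq[k]}(-1)^{|S|}(1-|S|/k)^{r}$ is the same alternating sum for independently coloured balls. Plugging the expansion $\binom{pn}{r}/\binom{n}{r}=p^{r}-p^{r-1}(1-p)\binom{r}{2}n^{-1}+O_r(n^{-2})$ with $p=1-|S|/k$ into the display — the $O(1)$ deviations of the $n_i$ from $n/k$ cancel to first order after averaging over which parts are larger — the constant terms cancel against $q$, and the coefficient of $n^{-1}$ becomes $-\binom{r}{2}\cdot\frac1k\sum_{j=0}^{k}(-1)^{j}\binom{k}{j}j\bigl(\tfrac{k-j}{k}\bigr)^{r-1}$. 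Using $j\binom{k}{j}=k\binom{k-1}{j-1}$ and the surjection identity $\sum_{i}(-1)^i\binom{m}{i}(m-i)^{t}=m!\,S(t,m)$ with $m=k-1$ and $t=r-1$, this inner sum equals $-S(r-1,k-1)k!/k^{r-1}$, so that
$$\Pr[e\text{ meets every part}]=q+\frac{\binom{r}{2}\,S(r-1,k-1)\,(k-1)!}{k^{r-1}}\cdot\frac1n+O_r(n^{-2}).$$
The coefficient of $1/n$ is strictly positive, since $r\ge k$ forces $S(r-1,k-1)\ge1$; hence there exist $n_0(r)$ and $c_r>0$ with $\Pr[e\text{ meets every part}]\ge q+c_r/n$ for all $n\ge n_0(r)$, which yields $\surp_k(H)\ge c_r m/n$ in that range. (For $r=k=3$ this gives $\surp_3(H)\ge\tfrac{2}{9}\cdot\tfrac{m}{n}\,(1+o(1))$, matching $\surp_3(K_n^{(3)})=\tfrac{n^2}{9}+O(n)$, so the bound is tight up to the constant.)

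The remaining loose end is the bounded range $k\le n<n_0(r)$, where the error term above is no longer negligible. I would close it by observing that $\Pr[e\text{ meets every part}\mid n_1,\dots,n_k]$ is a strictly Schur-concave function of $(n_1,\dots,n_k)$ — moving a ball from a larger part to a smaller one can only help $e$ meet all parts — so it is strictly larger for balanced sizes than its average over the non-degenerate multinomial size vector, which is $q$; thus $\Pr[e\text{ meets every part}]>q$ for every $n\ge k$, and minimising this strictly positive gap over the finitely many such $n$ (for fixed $r$) again gives $\Omega_r(1/n)$. A softer alternative is a compactness argument on the simplex of edge weights, using that $\pi\mapsto\sum_e x_e[e\text{ cut by }\pi]$ is non-constant for every nonzero weighting $x$ while having mean $q$ over a uniformly random map $V(H)\to[k]$. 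I expect the Schur-concavity inequality, together with the bookkeeping when $k\nmid n$, to be the only genuine work; everything else is the computation above.
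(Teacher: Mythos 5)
The paper does not prove this statement itself --- it imports it verbatim as Corollary 3.2 of Conlon--Fox--Kwan--Sudakov --- so there is no in-paper argument to compare against. Assessing your proof on its own terms: it is correct, and the route (nearly balanced random partition, per-edge inclusion--exclusion, asymptotic expansion of the hypergeometric cut probability) is the natural one. The expansion $\binom{pn}{r}/\binom{n}{r}=p^{r}-p^{r-1}(1-p)\binom{r}{2}n^{-1}+O_r(n^{-2})$ is right, the $O(1)$ part-size deviations do average out to first order because they sum to zero and the labelling of which parts are the larger ones is uniformly random, and the reduction of the $n^{-1}$ coefficient via $j\binom{k}{j}=k\binom{k-1}{j-1}$ together with the surjection identity gives $\binom{r}{2}(k-1)!\,S(r-1,k-1)/k^{r-1}$, which is strictly positive exactly because $r\ge k\ge 2$.

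Two points worth tidying. First, a numerical slip: for $r=k=3$ the coefficient is $\binom{3}{2}\cdot 2!\cdot S(2,2)/3^{2}=2/3$, not $2/9$; the downstream computation $\tfrac{2}{3}\cdot\binom{n}{3}/n=n^2/9+O(n)$ that you report is right, so only the stated constant is off. Second, the Schur-concavity step for small $n$ is true but should be justified rather than asserted. A clean way: fix two parts of sizes $a\ge b+2$ and fix how $e$ meets the remaining $k-2$ parts; for each $s$, the number of $s$-subsets of $V_1\cup V_2$ meeting both is $\binom{a+b}{s}-\binom{a}{s}-\binom{b}{s}$, and since $x\mapsto\binom{x}{s}$ is convex on the integers (second difference $\binom{x}{s-2}\ge 0$), the transfer $(a,b)\mapsto(a-1,b+1)$ does not decrease this. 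Summing over $s$ and over the outside configuration gives Schur-concavity of the conditional cut probability $f(n_1,\dots,n_k)$. Since the multinomial average of $f$ is exactly $q$, and degenerate size vectors (where $f=0<f(\text{balanced})$ once $n\ge r$) occur with positive multinomial probability, you get the strict gap $f(\text{balanced})>q$ for every $n\ge r$; minimising over the finitely many $n$ with $r\le n<n_0(r)$ then closes the range. (In fact this second argument already gives positivity for all $n\ge r$; the asymptotic expansion is needed only to sharpen ``positive'' to ``$\Omega_r(1/n)$''.)
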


\begin{lemma}[Theorem 1.4 in \cite{CFKS}]\label{lemma:surplus>n}
Let $H$ be an $r$-multigraph on $n$ vertices with no isolated vertices. Then for every $2\leq k\leq r$,
$$\surp_{k}(H)=\Omega_r(n).$$
\end{lemma}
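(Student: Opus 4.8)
The plan is to separate into a ``dense'' and a ``sparse'' regime, reducing to connected hypergraphs throughout. First, since a $k$-cut of a vertex-disjoint union is exactly the union of the $k$-cuts of the pieces, and the term $\frac{S(r,k)k!}{k^r}e(\cdot)$ is also additive, $\surp_k$ is additive over connected components; so I may assume $H$ is connected. Next, if $e(H)\geq \eps_r n^2$ for a suitably small constant $\eps_r=\eps_r(r)>0$, then Lemma~\ref{lemma:m/n} already gives $\surp_k(H)=\Omega_r(e(H)/n)=\Omega_r(n)$ and we are done; so I may assume $e(H)<\eps_r n^2$. Note that Lemma~\ref{lemma:m/n} by itself cannot suffice in general --- a perfect matching has no dense induced subhypergraph yet has surplus $\Omega_r(n)$ --- so the sparse case genuinely needs a new idea: the ability to split a large matching.

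The engine for the sparse regime is a maximal matching $M\subseteq E(H)$ with $\nu:=|M|$ edges; since $M$ is maximal, every edge of $H$ meets $V(M)$. If $\nu\geq c_rn$ for a small constant $c_r>0$, I would take a random $k$-partition in which the $r$ vertices of each edge of $M$ receive an independent uniformly random surjection onto $[k]$, while every other vertex receives an independent uniform colour. Then every edge of $M$ is split with probability $1$, and every other edge is split with probability at least $\frac{S(r,k)k!}{k^r}$: each vertex is marginally uniform, the only dependencies occur among the vertices of a single edge of $M$, and such ``anti-clustering'' of colours can only increase the chance that an edge meets all $k$ classes (this needs a short lemma, provable by a direct computation or an FKG-type coupling argument). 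Hence $\mc_k(H)\geq \frac{S(r,k)k!}{k^r}e(H)+\bigl(1-\tfrac{S(r,k)k!}{k^r}\bigr)\nu=\frac{S(r,k)k!}{k^r}e(H)+\Omega_r(n)$. If instead $\nu<c_rn$, then $U:=V(M)$ has fewer than $rc_rn<n/2$ vertices and meets every edge, so $W:=V(H)\setminus U$ has $\Omega_r(n)$ vertices. Here I would colour $U$ uniformly at random, then colour $W$ one vertex at a time in an order $w_1,\dots,w_t$, choosing the colour of $w_i$ to maximise the number of edges of which $w_i$ is the \emph{last} uncolored vertex that become split. The ``last-uncolored-vertex'' edge sets partition all edges meeting $W$, so this cut beats the random average by $\sum_i(\text{gain at }w_i)$; and, as in the graph case $r=k=2$, each $w_i$ is expected to gain a positive constant because its colour is a genuinely non-degenerate random variable (the $k$ conditional outcomes are not almost surely equal), giving a total gain of $\Omega_r(|W|)=\Omega_r(n)$. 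In the transparent special case where every edge contains at most one vertex of $W$ --- the exact analogue of $W$ being an independent set when $r=2$ --- this is immediate: the choices at distinct $w_i$ are then independent and $\mathbb{E}[\text{gain at }w_i]$ is an expected-absolute-deviation of a binomial-type count, which is $\Omega_r(1)$ whenever $\deg(w_i)\geq 1$.

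The main obstacle is making the sparse case ($\nu<c_rn$) fully rigorous when many edges carry two or more vertices of $W$, so that the per-vertex gains are no longer independent; the point to be proved is that for a constant fraction of the vertices $w_i$ the colour of $w_i$ still ``swings'' some incident edge with probability $\Omega_r(1)$ over the remaining randomness --- equivalently, that the greedy gain $\sum_i(\Phi_i-\Phi_{i-1})$ of the associated conditional-expectation potential is $\Omega_r(n)$ rather than merely non-negative. I would attack this by combining (i) peeling off edges that carry two or more vertices of $W$ occurring in no other edge (these can be split for free using their private vertices, with no effect elsewhere, which shrinks the ``shared'' part of $W$) with (ii) a direct variance estimate on the remaining vertices, and I expect the bookkeeping here --- ensuring that no reduction ever costs an additive $\Omega(n)$ --- to be the delicate heart of the argument. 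The base case $r=k=2$, namely that every multigraph with no isolated vertices satisfies $\mc(G)\geq m/2+\Omega(n)$, is already handled cleanly by this same maximal-matching dichotomy (large matching: force every matching edge to be cut; small matching: the uncovered vertices form an independent set, and each of them, placed after a uniformly random colouring of the rest, gains $\Omega(1)$ in expectation).
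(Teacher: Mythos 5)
This statement is a cited result (Theorem~1.4 of Conlon--Fox--Kwan--Sudakov), so the paper contains no proof of its own to compare against; I can only assess your sketch on its merits.

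Your overall architecture (reduce to connected $H$, then take a maximal matching $M$ and split into the cases $|M|\gtrsim n$ and $|M|\ll n$) is a sensible way to attack the problem, and the dense preprocessing via Lemma~\ref{lemma:m/n} is harmless though in fact unnecessary: the two matching cases together already cover everything. But as you yourself flag, both branches rest on unproved claims, and the second one is a genuine gap rather than a routine computation.

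For the large-matching branch, the claim you need is that replacing the uniform colouring of each $M$-edge by a uniformly random surjection onto $[k]$ cannot decrease the splitting probability of any other edge $e$. This does look true. One clean way to formalise it: by conditioning on the colours of $e\setminus S$ for any group $S=e\cap f$, it suffices to show that under the surjection law on an $M$-edge $f$, the number of \emph{distinct} colours appearing on any fixed subset $S\subseteq f$ stochastically dominates the number of distinct colours under independent uniform colouring of $S$; then the event ``the colours on $e$ cover $[k]$'' is a decreasing function of the number of missing colours on $S$. That dominance is plausible and verifiable in small cases, but it is not a one-liner, and you do need to prove it --- it is not a standard FKG statement because ``surjectivity'' is not an increasing event in any of the natural lattices.

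The small-matching branch is the real problem. Your ``last-uncoloured-vertex'' greedy gives a nonnegative gain at each step, but nonnegativity is far from $\Omega_r(n)$: a vertex $w_i$ contributes nothing if no edge is settled at step $i$, or if the edges that are settled at step $i$ are already split, or if the greedy scores $N_1,\dots,N_k$ for the $k$ colours happen to be equal. You correctly observe that the crux is showing the per-step gains do not ``dry up,'' but your proposed fix --- peeling off edges whose $W$-part is private, plus a variance estimate on what remains --- is not yet an argument. The difficulty is concrete: once two edges share several vertices of $W$, the greedy choice at one shared vertex changes the conditional colour distribution seen at later shared vertices, so the $\Omega_r(1)$-gain-per-active-vertex heuristic cannot be invoked vertex by vertex; it has to be established for the whole process at once, and that is precisely the part you have not supplied. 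Until that is done, the sparse case is open, and therefore so is the lemma, since sparse $H$ with a small maximal matching (sunflowers, books, and their unions) is exactly where neither Lemma~\ref{lemma:m/n} nor the large-matching device applies.
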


\noindent
In the case of multi-graphs $G$, we write simply $\mc(G)$ and $\surp(G)$ instead of $\mc_2(G)$ and $\surp_2(G)$.

\begin{lemma}\label{lemma:surplus_sum}
Let $G$ be a multi-graph and let $V_1,\dots,V_k\subset V(G)$ be disjoint sets. Then
$$\surp(G)\geq \sum_{i=1}^k \surp(G[V_i]).$$
\end{lemma}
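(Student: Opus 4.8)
The plan is to prove the inequality by a one-shot randomized rounding argument: take a maximum cut of each induced subgraph $G[V_i]$, glue these cuts together using independent random signs, assign the remaining vertices of $G$ independently at random, and check that the expected number of crossing edges is exactly $\tfrac12 e(G)+\sum_{i=1}^k\surp(G[V_i])$.

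First I would fix, for each $i$, a maximum cut $(A_i,B_i)$ of $G[V_i]$, so that the number of edges of $G[V_i]$ crossing it equals $\mc(G[V_i])=\tfrac12 e(G[V_i])+\surp(G[V_i])$. Then I would define a random partition $(\m{A},\m{B})$ of $V(G)$ as follows: for each $i$ flip an independent fair coin $\eps_i\in\{0,1\}$, putting $A_i\subseteq\m A$, $B_i\subseteq\m B$ if $\eps_i=0$ and swapping the two if $\eps_i=1$; and for each vertex $v\notin\bigcup_i V_i$, place $v$ into $\m A$ or $\m B$ according to a further independent fair coin.

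Next I would split the edges of $G$ (counted with multiplicity) into those lying inside some $V_i$ and all the others. An edge inside $V_i$ crosses $(\m A,\m B)$ if and only if it crosses $(A_i,B_i)$ — flipping $\eps_i$ does not change this — so the contribution of the edges inside $V_i$ is deterministically $\mc(G[V_i])$. For any other edge $uv$, the side of $u$ and the side of $v$ are governed by two distinct independent fair coins (the block coins $\eps_i,\eps_j$ if $u\in V_i$, $v\in V_j$ with $i\neq j$; the block coin and the vertex coin if exactly one endpoint lies in a block; two vertex coins otherwise), so $\Pr[uv\text{ crosses}]=\tfrac12$. Summing, the expected size of the cut is $\sum_{i=1}^k\mc(G[V_i])+\tfrac12\bigl(e(G)-\sum_{i=1}^k e(G[V_i])\bigr)=\tfrac12 e(G)+\sum_{i=1}^k\surp(G[V_i])$. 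Hence some partition of $V(G)$ attains at least this many crossing edges, which gives $\mc(G)\geq \tfrac12 e(G)+\sum_{i=1}^k\surp(G[V_i])$, i.e.\ the claimed bound.

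I do not expect a genuine obstacle here; the only point needing (minor) care is the case analysis showing that for every edge not contained in a single block the two relevant coins are independent and fair, which is immediate since $V_1,\dots,V_k$ are pairwise disjoint. (One could equally phrase this as an induction on $k$, peeling off one set $V_i$ at a time, but the direct computation above is cleaner.)
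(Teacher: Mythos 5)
Your argument is correct, and it is the standard (essentially unique) way to prove this statement; the paper in fact states Lemma~\ref{lemma:surplus_sum} without proof, presumably regarding exactly this randomized-glueing argument as routine. The decomposition into intra-block edges (whose crossing status is deterministic, contributing $\sum_i \mc(G[V_i])$) and all remaining edges (each crossing with probability $\tfrac12$ because the two relevant coins are distinct and independent) is precisely the right bookkeeping, and your closing observation that $\mc(G[V_i])=\tfrac12 e(G[V_i])+\surp(G[V_i])$ converts the expected cut size into $\tfrac12 e(G)+\sum_i\surp(G[V_i])$, from which the lemma follows by taking a partition achieving at least the expectation. No gap; the case analysis you flag is indeed immediate from disjointness of the $V_i$.
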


Given an $r$-multigraph $H$, for every $q\leq r$, the \emph{underlying} $q$-multigraph is the $q$-multigraph on vertex set $V(H)$, in which each $q$-tuple is added as an edge as many times as it is contained in an edge of $H$. The final lemma in this section is used to deduce Theorem \ref{thm:rk} from our results about 3-graphs.

\begin{lemma}\label{lemma:underlying}
Let $H$ be an $r$-multigraph, and let $H'$ be the underlying $(r-1)$-multigraph. Then
$$2r\surp_{r}(H)\geq \surp_{r-1}(H)\geq 2\surp_{r-1}(H').$$
\end{lemma}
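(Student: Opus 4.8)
The two inequalities are logically independent, and the plan is to prove each by a short comparison of the number of edges a \emph{fixed} vertex partition cuts in the (multi-)hypergraphs involved, followed by inserting the exact expectation coefficients $S(r,k)k!/k^{r}$. It is convenient to abbreviate $c_r=r!/r^{r}$ for the $r$-cut coefficient of an $r$-graph and $c'=S(r,r-1)(r-1)!/(r-1)^{r}=\tfrac{r!}{2(r-1)^{r-1}}$ for its $(r-1)$-cut coefficient, the last equality because $S(r,r-1)=\binom{r}{2}$.

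For the inequality $2r\,\surp_r(H)\ge\surp_{r-1}(H)$, I would start from an optimal $(r-1)$-cut $P=(W_1,\dots,W_{r-1})$ of $H$, which cuts $\mc_{r-1}(H)$ edges, and refine it randomly to an $r$-cut: adjoin an empty class $W_r$ and, independently for each vertex, move it into $W_r$ with probability $p$, leaving it in place otherwise. The key per-edge computation to carry out is the following. An $r$-edge $e$ that $P$ does \emph{not} cut misses some class $W_\ell$, hence still misses $W_\ell\setminus(\text{moved vertices})$, so $e$ can never become rainbow. An $r$-edge $e$ that $P$ cuts has, since $|e|=r$ and $e$ meets all $r-1$ classes, exactly one class containing two vertices of $e$, and $e$ becomes rainbow in the refined cut precisely when exactly one of those two is moved and none of the other $r-2$ vertices of $e$ is --- an event of probability $2p(1-p)^{r-1}$. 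So the expected number of rainbow edges equals $2p(1-p)^{r-1}\mc_{r-1}(H)$, and taking $p=1/r$ gives $\mc_r(H)\ge\rho\,\mc_{r-1}(H)$ with $\rho=\tfrac{2(r-1)^{r-1}}{r^{r}}$. Subtracting expectations and using the identity $\rho\,c'=c_r$ upgrades this to $\surp_r(H)=\mc_r(H)-c_r\,e(H)\ge\rho\bigl(\mc_{r-1}(H)-c'\,e(H)\bigr)=\rho\,\surp_{r-1}(H)$ with no loss, and $\rho\ge\tfrac1{2r}$ --- equivalently $\bigl(\tfrac{r}{r-1}\bigr)^{r-1}\le4$, which holds because $(1+1/n)^{n}<e<4$ --- then gives $2r\,\surp_r(H)\ge\surp_{r-1}(H)$ (in fact with slack).

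For the second inequality I would pass to the underlying $(r-1)$-multigraph $H'$, noting first $e(H')=r\,e(H)$ since each $r$-edge contributes its $r$ distinct $(r-1)$-subsets. Fix a partition $P=(V_1,\dots,V_{r-1})$: an $r$-edge meets all $r-1$ classes iff exactly one class contains two of its vertices, in which case exactly two of its $r$ many $(r-1)$-subsets --- those obtained by deleting a vertex from that doubled class --- meet all $r-1$ classes, while if the edge misses a class then none of its $(r-1)$-subsets does. Summing over edges, $P$ cuts exactly twice as many edges of $H'$ as of $H$, and maximising over $P$ gives $\mc_{r-1}(H')=2\,\mc_{r-1}(H)$. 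Since the $(r-1)$-cut expectation of $H'$ is $\tfrac{(r-1)!}{(r-1)^{r-1}}e(H')=\tfrac{r!}{(r-1)^{r-1}}e(H)=2c'\,e(H)$, it too doubles, so $\surp_{r-1}(H')=2\,\surp_{r-1}(H)$; equivalently $\surp_{r-1}(H)=\tfrac12\,\surp_{r-1}(H')$, which yields the second inequality of the lemma.

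I do not expect a genuine obstacle, but the one step that needs care is the per-edge probability in the refinement argument: one must verify that \emph{only} the $P$-cut edges contribute, and with the precise factor $2p(1-p)^{r-1}$, since an inaccuracy there would destroy the identity $\rho\,c'=c_r$ on which the passage from the max-cut bound to the surplus bound rests. It is also worth double-checking the elementary estimate $\bigl(\tfrac{r}{r-1}\bigr)^{r-1}<4$ so that $\rho\ge1/(2r)$ for every $r\ge3$, and --- for the recursive use of this lemma in deducing Theorem~\ref{thm:rk} --- that $e(H')=\Theta_r(e(H))$, so iterating down to a $3$-graph only loses constant factors and leaves the (poly)logarithmic terms untouched.
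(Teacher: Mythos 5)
Your proof of the first inequality $2r\,\surp_r(H)\geq\surp_{r-1}(H)$ is correct and is essentially the paper's argument: refine an optimal $(r-1)$-cut by moving each vertex into a new $r$-th class independently with probability $1/r$, compute the per-edge rainbow probability $2p(1-p)^{r-1}$, and verify $\rho\,c'=c_r$ together with $\rho\geq 1/(2r)$ via $\bigl(\tfrac{r}{r-1}\bigr)^{r-1}<e<4$. All of that checks out.

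For the second part, your per-edge count is right ($e_{H'}(P)=2e_H(P)$ for every partition $P$, and $e(H')=r\,e(H)$), and so is your conclusion that the maximum cut and the random-cut expectation both scale by exactly a factor of $2$, giving the identity $\surp_{r-1}(H')=2\,\surp_{r-1}(H)$. But your final sentence is a non sequitur: $\surp_{r-1}(H)=\tfrac12\surp_{r-1}(H')$ does \emph{not} yield the stated inequality $\surp_{r-1}(H)\geq 2\surp_{r-1}(H')$; since surpluses are nonnegative, $\tfrac12 x\geq 2x$ forces $x=0$, and indeed for $r=3$ a single triple has $\surp_2(H)=1/4$ while its underlying triangle has $\surp_2(H')=1/2$, violating the printed inequality. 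What you have actually proved is the reverse relation $2\surp_{r-1}(H)=\surp_{r-1}(H')$. This is a sign error in the lemma as stated in the paper (the paper's own proof at one point asserts $e_H(V_1,\dots,V_{r-1})=2e_{H'}(V_1,\dots,V_{r-1})$, with the two sides interchanged relative to the identity you correctly derived). The downstream uses in Theorems \ref{thm:rk}, \ref{thm:cut_codegree}, and \ref{thm:main_cut} only need $\surp_r(H)=\Omega_r\bigl(\surp_{r-1}(H')\bigr)$, which your first inequality together with your exact identity still supplies, so the global results are unaffected up to constants; but you should not claim that your identity implies the printed inequality --- state the corrected relation $\surp_{r-1}(H')=2\,\surp_{r-1}(H)$ instead.
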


\begin{proof}
We start with the first inequality. Let $U_1,\dots,U_{r-1}$ be a partition of $V(H)$ such that $e_{H}(U_1,\dots,U_{r-1})=\mc_{r-1}(H)$. Let $V_{r}$ be a random subset of $V(H)$, each element chosen independently with probability $1/r$, and let $V_i=U_i\setminus V_r$ for $i=1,\dots,r-1$. Note that if an edge $e$ of $H$ contains at least one element of each $U_1,\dots,U_{r-1}$, then exactly one of these sets contains two vertices of $e$. Hence, the probability that $e$ is cut by $V_1,\dots,V_{r}$ is $c_r=2(1-\frac{1}{r})^{r-1}\frac{1}{r}\geq \frac{1}{2r}$. Hence,
$$\mathbb{E}(e_H(V_1,\dots,V_r))= c_r\mc_{r-1}(H)=c_r\surp_{r-1}(H)+c_r\frac{r!/2}{(r-1)^{r-1}}e(H)=c_r\surp_{r-1}(H)+\frac{r!}{r^r}e(H),$$
so $\surp_{r}(H)\geq c_r \surp_{r-1}(H)$.

Now let us prove the second inequality. Let $V_1,\dots,V_{r-1}$ be a partition of $V(H)$ such that $e_{H'}(V_1,\dots,V_{r-1})=\mc_{r-1}(H')$. If $e$ is an edge of $H$ cut by $V_1,\dots,V_{r-1}$, then among the $(r-1)$-element subsets of $e$, exactly two are cut by $V_1,\dots,V_{r-1}$. Thus, $e_H(V_1,\dots,V_{r-1})=2e_{H'}(V_1,\dots,V_{r-1})$. Therefore, 
$$\mc_{r-1}(H)\geq 2\mc_{r-1}(H')=2\surp_{r-1}(H')+\frac{(r-1)!}{(r-1)^{r-1}}e(H')=2\surp_{r-1}(H')+\frac{r!/2}{(r-1)!}e(H).$$
Hence, $\surp_{r-1}(H)\geq 2\surp_{r-1}(H').$
\end{proof}

\subsection{Probabilistic constructions}\label{sect:prob}

In this section, we verify our claims about the surplus of random 3-graphs.

\begin{lemma}\label{lemma:construction}
For every $\frac{10\log n}{n^2}\leq p\leq \frac{1}{n}$, there exists a 3-graph $H$ on $n$ vertices such that $H$ has $\Theta(pn^3)$ edges, the maximum degree of $H$ is $O(pn^2)$, the maximum co-degree of $H$ is $O(\log n)$, and $\surp_3(H)=O(\sqrt{p}n^2)$.
\end{lemma}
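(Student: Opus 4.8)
The plan is to take $H$ to be the binomial random $3$-graph $H^{(3)}(n,p)$, in which each of the $\binom{n}{3}$ triples of vertices is included as an edge independently with probability $p$, and to verify that with probability $1-o(1)$ all four listed properties hold simultaneously; this clearly suffices to prove the lemma.

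The first three properties follow from standard concentration. The edge count $e(H)$ is $\mathrm{Bin}\big(\binom n3,p\big)$ with mean $\binom n3 p=\Theta(pn^3)=\Omega(n\log n)$ (using $p\ge 10\log n/n^2$), so a Chernoff bound gives $e(H)=\Theta(pn^3)$ and in fact $\big|e(H)-\binom n3 p\big|\le\sqrt p\,n^2$ with probability $1-o(1)$, since the standard deviation is only $\Theta(\sqrt p\,n^{3/2})$. The degree of a fixed vertex is $\mathrm{Bin}\big(\binom{n-1}2,p\big)$ with mean $\Theta(pn^2)=\Omega(\log n)$, so Chernoff together with a union bound over the $n$ vertices yields $\Delta(H)=O(pn^2)$ with probability $1-o(1)$. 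The co-degree of a fixed pair is $\mathrm{Bin}(n-2,p)$ with mean $(n-2)p\le 1$, so $\Pr[\text{co-degree}\ge t]\le\binom nt p^t\le(e/t)^t$; choosing $t=\Theta(\log n)$ large enough and taking a union bound over the $\binom n2$ pairs shows the maximum co-degree is $O(\log n)$ with probability $1-o(1)$.

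The heart of the matter is the bound $\mc_3(H)\le pn^3/27+O(\sqrt p\,n^2)$. I would fix a partition $V(H)=V_1\cup V_2\cup V_3$ and set $N=|V_1||V_2||V_3|$, which is the number of triples with one vertex in each part and is at most $n^3/27$ by AM--GM; then $e_H(V_1,V_2,V_3)$ is $\mathrm{Bin}(N,p)$ with mean $pN$, and Bernstein's inequality gives
$$\Pr\big[e_H(V_1,V_2,V_3)\ge pN+t\big]\le\exp\!\left(-\frac{t^2}{2(pN+t/3)}\right).$$
With $t=C\sqrt p\,n^2$ for a suitable absolute constant $C$, this probability is $e^{-\Omega(n)}$: if $pN\ge t/3$ the exponent is $\gtrsim t^2/(pN)\gtrsim C^2 n$, and if $pN<t/3$ it is $\gtrsim t\gtrsim C\sqrt p\,n^2\ge Cn$ (using $\sqrt p\,n\ge\sqrt{10\log n}\ge 1$). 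A union bound over the at most $3^n$ partitions then shows that with probability $1-o(1)$ every partition satisfies $e_H(V_1,V_2,V_3)\le pN+C\sqrt p\,n^2\le pn^3/27+C\sqrt p\,n^2$, so $\mc_3(H)\le pn^3/27+O(\sqrt p\,n^2)$. On the intersection of all of the above events, using $n^2p\le n\le\sqrt p\,n^2$ (the latter since $p\ge 1/n^2$) one gets $\tfrac29 e(H)\ge\tfrac29\binom n3 p-\tfrac29\sqrt p\,n^2=n^3p/27-O(\sqrt p\,n^2)$, and hence $\surp_3(H)=\mc_3(H)-\tfrac29 e(H)=O(\sqrt p\,n^2)$, as desired.

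The step I expect to be the main obstacle is exactly this bound on $\mc_3(H)$: one must check that Bernstein's inequality delivers a tail of order $e^{-\Omega(n)}$, strong enough to survive a union bound over all $3^n$ partitions, \emph{uniformly} over both the nearly balanced partitions (where $e_H(V_1,V_2,V_3)$ concentrates in a Gaussian window of width $\sqrt{pN}\lesssim\sqrt p\,n^{3/2}$) and the highly unbalanced ones (where the concentration is of Poisson type). A related point requiring care is that the leading terms $n^3p/27$ coming from $\mc_3(H)$ and from $\tfrac29 e(H)$ must cancel up to an error of only $O(\sqrt p\,n^2)$; this relies on the fact that both $\tfrac29\binom n3$ and $\max_{V_1,V_2,V_3}|V_1||V_2||V_3|$ differ from $n^3/27$ by only $O(n^2)$, so these discrepancies contribute only $O(n^2p)=O(\sqrt p\,n^2)$.
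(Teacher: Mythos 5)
Your proposal is correct and follows essentially the same route as the paper: the same random model $H^{(3)}(n,p)$, Chernoff-type concentration for the edge count, degrees and co-degrees, and a union bound over all $3^n$ tripartitions combined with a per-partition tail bound (you use Bernstein where the paper uses the Chernoff--Hoeffding form, but these are interchangeable here and give the same $e^{-\Omega(n)}$ tail). You also correctly identify the one point requiring care, namely that both $\frac{2}{9}\binom{n}{3}$ and $\max|V_1||V_2||V_3|$ differ from $n^3/27$ by only $O(n^2)$ so the leading terms cancel to within the claimed $O(\sqrt{p}\,n^2)$ error, which is the same bookkeeping the paper does.
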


\begin{proof}
Let $p$ be such that $\frac{10\log n}{n^2}\leq p\leq \frac{1}{n}$, and let $H$ be the 3-graph on $n$ vertices, in which each of the $\binom{n}{3}$ potential edges are included independently with probability $p$. We assume that $n$ is sufficiently large. By the Chernoff-Hoeffding theorem, for every $x\in (0,1/2)$, we have
$$\mathbb{P}\left(|e(H)/\binom{n}{3}-p|\geq x\right)\leq 2\exp\left(-\frac{x^2\binom{n}{3}}{4p}\right).$$
Hence, setting $x=20\sqrt{p}n^{-3/2}$,  $$\mathbb{P}\left(|e(H)-p\binom{n}{3}|\geq 20\sqrt{p}n^{3/2}\right)\leq 0.1.$$ 
 Also, by similar concentration arguments, the maximum degree of $H$ is at most $2p\binom{n}{2}$ with probability $0.9$ by our assumption $pn^2\geq 10\log n$. Moreover, the maximum co-degree is at most $10\log n$ with probability $0.9$ by our assumption that $p\leq 1/n$. 

Let $X,Y,Z$ be a partition of the vertex set into three parts, and let $N=(n/3)^3\geq |X||Y||Z|=T$. For every $x>0$, 
$$\mathbb{P}\left(e(X,Y,Z)\geq N(p+x)\right)\leq \mathbb{P}\left(\frac{e(X,Y,Z)}{T}\geq p+x\frac{N}{T}\right)\leq \exp\left(-\frac{(xN/T)^2\cdot T}{4p}\right)\leq  \exp\left(-\frac{x^2 N}{4p}\right),$$
where the second inequality is due to the Chernoff-Hoeffding theorem. Set $x=\frac{20\sqrt{p}}{n}$. The number of partitions of $V(H)$ into three sets is at most $3^n$, so
$$\mathbb{P}(\exists X,Y,Z: e(X,Y,Z)\geq N(p+x))\leq 3^n\exp\left(-\frac{x^2 N}{4p}\right)<0.1.$$
Hence, with probability 0.9, $\mc_3(H)\leq N(p+x)\leq pN+3\sqrt{p}n^2$.

In conclusion, with positive probability, there exists a 3-graph $H$ on $n$ vertices such that $|e(H)-p\binom{n}{3}|\leq O(\sqrt{p}n^{3/2})$, the maximum degree of $H$ is at most $pn^2$, the maximum co-degree is at most $10\log n$, and $\mc_3(H)\leq p\frac{n^3}{27}+O(\sqrt{p}n^2)$. For such a 3-graph $H$, we have
$$\surp_3(H)=\mc_3(H)-\frac{2}{9}e(H)\leq p\frac{n^3}{27}+O(\sqrt{p}n^2)-\frac{2}{9}p\binom{n}{3}+O(\sqrt{p}n^{3/2})=O(\sqrt{p}n^2).$$
In the last equality, we used that $p\cdot \frac{n^3}{27}-\frac{2}{9}p\binom{n}{3}=O(pn^2)$.
\end{proof}

If one chooses $p=1/n$, we have $m=e(H)=\Theta(n^2)$, every co-degree in $H$ is $O(\log n)$, and $\surp_3(H)=O(n^{3/2})=O(m^{3/4})$, showing the almost tightness of Theorem \ref{thm:main_cut_linear}.

\section{Surplus and Energy}\label{sect:energy}

 In this section, we prove a lower bound on the surplus in terms of the energy of the graph. Let $G$ be a multi-graph on vertex set $V$, and let $|V|=n$. The adjacency matrix of $G$ is the matrix $A\in \mathbb{R}^{V\times V}$ defined as $A(u,v)=k$, where $k$ is the number of edges between $u$ and $v$.
 
 Recall that $\mc(G)=\frac{e(G)}{2}+\surp(G)$. Next, we claim that 
 \begin{equation*}
      \surp(G)=\max_{x\in \{-1,1\}^{V}} -\frac{1}{2}\sum_{uv\in E(G)}x(u)x(v).
 \end{equation*}
 Indeed, every $x\in \{-1,1\}^{V}$ corresponds to a partition $X,Y$ of $V(G)$, where $X=\{v\in V(G):x(v)=1\}$, and then
 $$e(X,Y)-\frac{m}{2}=\frac{1}{2}(e(X,Y)-e(X)-e(Y))=-\frac{1}{2}\sum_{uv\in E(G)}x(u)x(v).$$
 We define the surplus of arbitrary symmetric matrices $A\in\mathbb{R}^{n\times n}$ as well. We restrict our attention to symmetric matrices, whose every diagonal entry is 0. Let
\begin{equation}\label{equ:quadratic2}
      \surp(A)=\max_{x\in \{-1,1\}^{n}} -\frac{1}{2}\sum_{i,j\in [n]}A(i,j)x(i)x(j).
 \end{equation}
 Then $\surp(G)=\surp(A)$ if $A$ is the adjacency matrix of $G$. Observe that by the condition that every diagonal entry of $A$ is zero, $\sum_{i,j}A(i,j)x(i)x(j)$ is a multilinear function, so its minimum on $[-1,1]^n$ is attained by one of the extremal points. Therefore, (\ref{equ:quadratic2}) remains true if the maximum is taken over all $x\in [-1,1]^{n}$. We introduce the semidefinite relaxation of $\surp(A)$. Let 
\begin{equation}\label{equ:semidefinite}
    \surp^*(A)=\max-\frac{1}{2}\sum_{i,j}A(i,j)\langle z_i,z_j\rangle,
\end{equation}
where the maximum is taken over all $z_1,\dots,z_n\in\mathbb{R}^{n}$ such that $||z_i||\leq 1$ for $i\in [n]$. Clearly, $\surp^*(A)\geq \surp(A).$ On the other hand, we can use the following symmetric  analogue of Grothendieck's inequality \cite{Groth} to show that $\surp^{*}(A)$ also cannot be much larger than $\surp(A)$.
	
	\begin{lemma}[\cite{AMMN,CW04}]\label{lemma:groth}
		There exists a universal constant $C>0$ such that the following holds. Let $M\in \mathbb{R}^{n\times n}$ be symmetric. Let 
		$$\beta=\sup_{x\in [-1,1]^{n}}\sum M_{i,j}x(i)x(j),$$
		and let
		$$\beta^{*}=\sup_{v_1,\dots,v_n\in \mathbb{R}^{n},||v_i||\leq 1}\sum M_{i,j}\langle v_i,v_j\rangle.$$
		Then $\beta\leq \beta^{*}\leq C\beta \log n$.
	\end{lemma}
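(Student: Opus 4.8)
I would first dispose of the easy inequality $\beta \le \beta^*$: given any $x \in [-1,1]^n$, fix a unit vector $e \in \mathbb{R}^n$ and put $v_i = x(i)\,e$, so that $\|v_i\| = |x(i)| \le 1$ and $\langle v_i, v_j\rangle = x(i)x(j)$; then $\sum_{i,j} M_{i,j}x(i)x(j) = \sum_{i,j}M_{i,j}\langle v_i,v_j\rangle \le \beta^*$, and taking the supremum over $x$ gives $\beta \le \beta^*$. All the work is in the reverse bound $\beta^* \le C\beta\log n$, which I would prove by a Gaussian rounding argument in the style of Charikar--Wirth \cite{CW04} (the optimal constant being due to \cite{AMMN}). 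I will only treat the case where $M$ has zero diagonal, which is the only one we need (there $M$ is a multiple of an adjacency matrix); the general case is in the cited references and needs an extra regularisation step, as an arbitrary symmetric matrix may have entries much larger than $\beta^*$.

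For the upper bound, fix vectors $v_1,\dots,v_n \in \mathbb{R}^n$ with $\|v_i\| \le 1$ attaining $\beta^*$ (the feasible region is compact). Let $g \in \mathbb{R}^n$ be a standard Gaussian vector, set $T := c\sqrt{\log n}$ for a large constant $c$, put $y_i = \langle g, v_i\rangle / T$, and let $x(i)$ be $y_i$ truncated to $[-1,1]$. Since $x \in [-1,1]^n$ for every realisation of $g$, we have $\mathbb{E}\big[\sum_{i,j}M_{i,j}x(i)x(j)\big] \le \beta$. On the other hand $(y_i,y_j)$ is jointly Gaussian with $\mathbb{E}[y_iy_j] = \langle v_i,v_j\rangle/T^2$ and $\mathbb{E}[y_i^2] = \|v_i\|^2/T^2 \le 1/T^2$, so without the truncation the expectation would be exactly $\beta^*/T^2$. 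Writing $x(i)x(j) - y_iy_j = (x(i)-y_i)x(j) + y_i(x(j)-y_j)$, noting $|x(i)-y_i| = (|y_i|-1)_+$ is nonzero only on the tail event $\{|\langle g,v_i\rangle| > T\}$ and $|x(j)| \le |y_j|$, and estimating via Cauchy--Schwarz and standard Gaussian moment/tail bounds, one obtains
$$\Big|\mathbb{E}\big[x(i)x(j) - y_iy_j\big]\Big| \;\le\; 2\,\mathbb{E}\big[(|y_i|-1)_+^2\big]^{1/2}\,\mathbb{E}\big[y_j^2\big]^{1/2} \;\le\; \frac{C_0}{T^{3/2}}\,e^{-T^2/4} \;=:\; \tau ,$$
which is $n^{-\Omega(c^2)}$.

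To conclude, observe that since $M$ has zero diagonal, taking $v_i = e$, $v_j = \mathrm{sgn}(M_{i,j})\,e$ and all other vectors $0$ shows $\beta^* \ge 2|M_{i,j}|$ for every $i \ne j$, hence $\sum_{i \ne j}|M_{i,j}| \le n^2\beta^*/2$. Therefore
$$\mathbb{E}\Big[\sum_{i,j}M_{i,j}x(i)x(j)\Big] \;\ge\; \frac{\beta^*}{T^2} - \tau\sum_{i \ne j}|M_{i,j}| \;\ge\; \frac{\beta^*}{T^2} - \frac{n^2\tau}{2}\,\beta^* \;\ge\; \frac{\beta^*}{2T^2} ,$$
the last step holding once $c$ is chosen large enough that $n^2\tau \le T^{-2}$. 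Combining with the bound $\mathbb{E}[\sum_{i,j}M_{i,j}x(i)x(j)] \le \beta$ gives $\beta \ge \beta^*/(2T^2) = \Omega(\beta^*/\log n)$, that is, $\beta^* \le C\beta\log n$.

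The one genuinely delicate point is the truncation error: I must show that clipping at level $T = \Theta(\sqrt{\log n})$ costs only a lower-order term after summing over all $n^2$ entries weighted by $|M_{i,j}|$. This is exactly why we scale down by $T$ before clipping rather than using plain hyperplane rounding $x(i) = \mathrm{sgn}(\langle g,v_i\rangle)$: that would give $\mathbb{E}[x(i)x(j)] = \tfrac{2}{\pi}\arcsin\langle v_i,v_j\rangle$, whose higher-order terms $\sum_k c_k\langle v_i,v_j\rangle^{2k+1} = \sum_k c_k\langle v_i^{\otimes(2k+1)},v_j^{\otimes(2k+1)}\rangle$ are governed by the value of the program for $-M$, and so cannot be bounded in terms of $\beta$ when $M$ is sign-indefinite. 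Scaling by $T$ makes $\arcsin$ essentially linear on the relevant scale while keeping the truncation events rare; balancing the $1/T^2$ loss in the main term against the truncation error forces $T = \Theta(\sqrt{\log n})$, and the resulting logarithmic loss is known to be unavoidable for this quadratic Grothendieck-type problem.
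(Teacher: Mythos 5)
The paper itself gives no proof of this lemma; it is stated as a black-box citation to \cite{AMMN,CW04} and used as a subroutine, so there is no ``paper proof'' to compare against directly. Your proposal is a correct self-contained reproduction of the Gaussian rounding argument from Charikar--Wirth \cite{CW04}, which is exactly the source of the $O(\log n)$ ratio being cited. The easy inequality $\beta\le\beta^*$ via lifting $x$ to a line is standard. For the converse, the scaled projection $y_i=\langle g,v_i\rangle/T$ with $T=\Theta(\sqrt{\log n})$ followed by truncation to $[-1,1]$ gives the right trade-off: the main term retains a factor $\beta^*/T^2$, the truncation only triggers on the superpolynomially rare event $|\langle g,v_i\rangle|>T$, and you absorb the resulting error over all $n^2$ entries using $\sum_{i\ne j}|M_{i,j}|\le n^2\beta^*/2$ --- a bound that indeed needs the zero-diagonal hypothesis. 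Your explicit restriction to zero-diagonal $M$ is acceptable in context: the paper applies the lemma only to $M=-\tfrac12 A$ with $A$ a symmetric matrix having zero diagonal (e.g.\ the adjacency matrix of a loopless multigraph), and you correctly flag that the general statement in \cite{AMMN,CW04} needs a separate treatment of the diagonal. Your closing remark on why plain sign rounding $x(i)=\mathrm{sgn}\langle g,v_i\rangle$ fails --- the higher-order $\arcsin$ terms $c_k\langle v_i,v_j\rangle^{2k+1}$ are controlled by the SDP value for $\pm M$, not by $\beta$ --- correctly identifies the conceptual reason the $T$-scaling is needed, which is the key insight of \cite{CW04}. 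Two cosmetic points: the Cauchy--Schwarz step should really produce the symmetrised bound $\mathbb{E}[(|y_i|-1)_+^2]^{1/2}\mathbb{E}[y_j^2]^{1/2}+\mathbb{E}[y_i^2]^{1/2}\mathbb{E}[(|y_j|-1)_+^2]^{1/2}$ rather than twice one of the cross terms, and the exponent in $\tau$ is more naturally $e^{-T^2/2}$ before taking the square root; neither affects the conclusion since any $n^{-\Omega(c^2)}$ decay suffices once $c$ is large.
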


 Applying this lemma with the matrix $M=-\frac{1}{2}A$, we get that 
 $$\surp^*(A)\leq C\surp(A)\log n.$$
 Finally, it is convenient to rewrite equation (\ref{equ:semidefinite}) as
 \begin{equation}\label{equ:matrix}
      \surp^*(A)=\max-\frac{1}{2}\langle A,X\rangle,
 \end{equation}
 where the maximum is taken over all positive semidefinite symmetric matrices $X\in \mathbb{R}^{n\times n}$ which satisfy $X(i,i)\leq 1$ for every $i\in [n]$. Indeed, (\ref{equ:semidefinite}) and (\ref{equ:matrix}) are equivalent, as if $z_1,\dots,z_n\in\mathbb{R}^n$ such that $||z_i||\leq 1$, then the matrix $X$ defined as $X(i,j)=\langle z_i,z_j\rangle$ is positive semidefinite and satisfies $X(i,i)=||z_i||^2\leq 1$. Also, every positive semidefinite matrix $X$ such that $X(i,i)\leq 1$ is the Gram matrix of some vectors $z_1,\dots,z_n\in \mathbb{R}^n$ satisfying $||z_i||\leq 1$.

 In the next lemma, we connect the spectral properties of $A$ to the surplus. Let $\lambda_1\geq \dots\geq \lambda_n$ be the eigenvalues of $A$. 
 
 \begin{definition}
     The \emph{energy} of $A\in \mathbb{R}^{n\times n}$ is defined as 
$$\mathcal{E}(A)=\sum_{i=1}^{n}|\lambda_i|,$$
and if $A$ is the adjacency matrix of a graph $G$, then the energy of $G$ is $\mathcal{E}(G)=\mathcal{E}(A)$. 
 \end{definition}

Usually, the energy of $G$ is denoted by $E(G)$, but we use the notation $\mathcal{E}(G)$ to not confuse with the edge set of $G$, or with the expectation $\mathbb{E}$. In a certain weak sense, the energy measures how random-like a graph is. Indeed, if $G$ is a $d$-regular graph on $n$ vertices, then the energy of $G$ is at most $O(\sqrt{d} n)$, and this bound is attained by the random $d$-regular graph. Next, we show that the energy is a lower bound (up to a constant factor) for $\surp^*(G)$.

\begin{lemma}
If $\tr(A)=0$, then $\surp^*(A)\geq \frac{1}{4}\mathcal{E}(A).$
\end{lemma}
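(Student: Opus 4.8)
**Proof proposal for the final lemma ($\surp^*(A) \geq \frac{1}{4}\mathcal{E}(A)$):**

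We need to construct a feasible point for the SDP \eqref{equ:matrix} — a positive semidefinite matrix $X$ with $X(i,i) \leq 1$ — that achieves objective value at least $\frac{1}{4}\mathcal{E}(A)$.

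The plan: use the spectral decomposition $A = \sum_i \lambda_i v_i v_i^\top$ with orthonormal eigenvectors $v_i$. We want $X$ to "see" the negative part of $A$, since the objective is $-\frac12\langle A, X\rangle$. Natural candidate: let $P_- = \sum_{i : \lambda_i < 0} v_i v_i^\top$ be the projection onto the negative eigenspace and $P_+$ the projection onto the nonnegative eigenspace, and set $X = P_- - P_+$ (plus possibly a scaling). Then $\langle A, X\rangle = \sum_i \lambda_i \langle v_i v_i^\top, P_- - P_+\rangle = \sum_{\lambda_i<0}\lambda_i - \sum_{\lambda_i\geq 0}\lambda_i = -\mathcal{E}(A)$ (using $\tr(A)=0$ is not even needed here, but it tells us the positive and negative parts have equal sum, namely $\mathcal{E}(A)/2$ each). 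So $-\frac12\langle A, X\rangle = \frac12\mathcal{E}(A)$. The catch is that $X = P_- - P_+$ is \emph{not} positive semidefinite.

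To fix this, shift: $X' = \frac12(I + P_- - P_+) = \frac12(I + (I - 2P_+)) = I - P_+$... let me recompute — actually $P_- - P_+$ has eigenvalues $\pm 1$, so $\frac12(I + (P_- - P_+))$ has eigenvalues in $\{0, 1\}$, hence is PSD, and its diagonal entries are at most $1$ (they equal $\frac12(1 + (P_-(i,i) - P_+(i,i)))$, and $|P_-(i,i) - P_+(i,i)| \leq P_-(i,i)+P_+(i,i) = 1$). This $X'$ is exactly $P_-$, the projection onto the negative eigenspace! Then $-\frac12\langle A, X'\rangle = -\frac12\langle A, P_-\rangle = -\frac12\sum_{\lambda_i<0}\lambda_i = \frac14\mathcal{E}(A)$, using $\tr(A)=0 \Rightarrow \sum_{\lambda_i<0}|\lambda_i| = \mathcal{E}(A)/2$. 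The condition $\tr(A)=0$ (equivalently, zero diagonal, as assumed) is precisely what forces the negative part to carry half the energy, and also guarantees $P_-$ has diagonal entries at most $1$ since $\tr(P_-) \le \tr(I) = n$ distributes — more directly, each $P_-(i,i) = \|P_- e_i\|^2 \le \|e_i\|^2 = 1$.

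I expect no serious obstacle here; the lemma is essentially a one-line observation once one picks $X = P_-$. The only things to be careful about are: (i) verifying $P_-(i,i) \le 1$ for feasibility (immediate since $P_-$ is an orthogonal projection, so $0 \preceq P_- \preceq I$), and (ii) correctly invoking $\tr(A) = 0$ to split $\mathcal{E}(A) = \sum_{\lambda_i \ge 0}\lambda_i + \sum_{\lambda_i<0}|\lambda_i| = 2\sum_{\lambda_i<0}|\lambda_i|$. Thus $\surp^*(A) \ge -\frac12\langle A, P_-\rangle = \frac14\mathcal{E}(A)$.
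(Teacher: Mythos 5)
Your proof is correct and matches the paper's argument: both choose $X = P_- = \sum_{i:\lambda_i<0} v_i v_i^T$, the orthogonal projection onto the negative eigenspace, verify $X(j,j)\le 1$ via orthonormality of the eigenvectors, and use $\tr(A)=0$ to conclude $\sum_{\lambda_i<0}|\lambda_i|=\tfrac12\mathcal{E}(A)$. (One minor note: the remark that $\tr(A)=0$ ``also guarantees'' $P_-(i,i)\le 1$ is not right --- that bound is independent of the trace condition --- but your immediate self-correction via $P_-(i,i)=\|P_-e_i\|^2\le 1$ is exactly what the paper uses.)
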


\begin{proof}
Let $v_1,\dots,v_n$ be an orthonormal set of eigenvectors of $A$ such that $\lambda_i$ is the eigenvalue corresponding to $v_i$. Let 
$$X=\sum_{i:\lambda_i<0}v_i\cdot v_i^T.$$
Then $X$ is positive semidefinite. Also, for $j=1,\dots,n$,
$$X(j,j)=\sum_{i:\lambda_i<0}v_i(j)^2\leq \sum_{i=1}^n v_i(j)^2=1.$$ Hence, by (\ref{equ:matrix}), 
$$\surp^{*}(A)\geq -\frac{1}{2}\langle X,A\rangle.$$
Observe that $A=\sum_{i=1}^{n}\lambda_i (v_i\cdot v_i^T)$. Hence,
$$-\frac{1}{2}\langle X,A\rangle=-\frac{1}{2}\left\langle \sum_{i:\lambda_i<0}v_i v_i^T,\sum_{i=1}^n\lambda_iv_i v_i^T\right\rangle=-\frac{1}{2}\sum_{i:\lambda_i<0}\sum_{j=1}^{n}\lambda_j\langle v_i,v_j\rangle^2=-\frac{1}{2}\sum_{i:\lambda_i<0}\lambda_i.$$
 As $\sum_{i=1}^{n}\lambda_i=\mbox{tr}(A)=0$, we have $\sum_{i:\lambda_i<0}\lambda_i=-\frac{1}{2}\mathcal{E}(A)$. This finishes the proof.
\end{proof}

\begin{corollary}\label{cor:surplus_energy}
If $A$ has only zeros in the diagonal, then $\surp(A)=\Omega(\frac{\mathcal{E}(A)}{\log n}).$
\end{corollary}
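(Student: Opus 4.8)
The plan is to combine the two inequalities already established in this section. We have the symmetric Grothendieck inequality (Lemma \ref{lemma:groth}), which applied to $M=-\frac{1}{2}A$ gives $\surp^*(A)\leq C\surp(A)\log n$ for a universal constant $C>0$. On the other hand, the previous lemma shows that whenever $\tr(A)=0$ we have $\surp^*(A)\geq \frac{1}{4}\mathcal{E}(A)$. Since $A$ having only zeros on the diagonal certainly implies $\tr(A)=0$, we may chain these together: $\frac{1}{4}\mathcal{E}(A)\leq \surp^*(A)\leq C\surp(A)\log n$.

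Rearranging, this yields $\surp(A)\geq \frac{1}{4C}\cdot\frac{\mathcal{E}(A)}{\log n}=\Omega\!\left(\frac{\mathcal{E}(A)}{\log n}\right)$, which is exactly the claimed bound. So the proof is essentially a two-line deduction from the results immediately preceding it.

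There is no real obstacle here; the only minor point to be careful about is the degenerate regime. If $n=1$ then $\log n = 0$ and the statement should be read with the usual convention that it is trivially true (a $1\times 1$ matrix with zero diagonal is the zero matrix, so $\mathcal{E}(A)=0=\surp(A)$); more generally one should note that the hypothesis "zeros on the diagonal" is used both to guarantee $\tr(A)=0$ (so the preceding lemma applies) and implicitly in the definition of $\surp^*$ via (\ref{equ:matrix}). For $n\geq 2$ the inequality $\frac{1}{4}\mathcal{E}(A)\leq C\surp(A)\log n$ is exactly what we need, and the constant hidden in the $\Omega(\cdot)$ is $\frac{1}{4C}$ with $C$ the Grothendieck-type constant from Lemma \ref{lemma:groth}. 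Thus the corollary follows immediately.
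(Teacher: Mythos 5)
Your proof is correct and matches the paper's intended argument: the corollary is a direct consequence of chaining $\surp^*(A)\geq\frac{1}{4}\mathcal{E}(A)$ (the lemma just above, using $\tr(A)=0$) with $\surp^*(A)\leq C\surp(A)\log n$ (Lemma \ref{lemma:groth} applied to $M=-\frac{1}{2}A$). The remark about the degenerate $n=1$ case is sensible but not needed; otherwise this is exactly the two-line deduction the paper has in mind.
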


We finish this section by stating a simple result about the additivity properties of energy, which will come in handy later.

\begin{lemma}\label{lemma:energy_additive}
Let $A,B\in \mathbb{R}^{n\times n}$ be symmetric matrices. Then
$$\mathcal{E}(A+B)\leq 4(\mathcal{E}(A)+\mathcal{E}(B)).$$
\end{lemma}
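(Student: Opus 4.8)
The plan is to observe that $\mathcal{E}(M)=\sum_i|\lambda_i(M)|$ is precisely the nuclear (trace) norm of the symmetric matrix $M$, hence it is subadditive; so in fact the inequality holds with constant $1$ rather than $4$, and I would prove this stronger statement by a short self‑contained argument (which of course implies the stated bound).

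First I would decompose each symmetric matrix into its positive and negative spectral parts: for $M$ with orthonormal eigenbasis $v_1,\dots,v_n$ and eigenvalues $\lambda_i$, set $M_+=\sum_{\lambda_i>0}\lambda_i v_iv_i^T\succeq 0$ and $M_-=-\sum_{\lambda_i<0}\lambda_i v_iv_i^T\succeq 0$, so that $M=M_+-M_-$ and $\mathcal{E}(M)=\tr(M_+)+\tr(M_-)$. The main step is then the estimate: \emph{if $P,Q$ are positive semidefinite, then $\mathcal{E}(P-Q)\le\tr(P)+\tr(Q)$.} To prove it, let $\Pi$ be the orthogonal projection onto the positive eigenspace of $P-Q$; then $\tr\big((P-Q)_+\big)=\tr\big((P-Q)\Pi\big)=\tr(P\Pi)-\tr(Q\Pi)\le\tr(P\Pi)$ since $Q,\Pi\succeq0$, and $\tr(P\Pi)=\tr\big(P^{1/2}\Pi P^{1/2}\big)\le\tr(P)$ because $\Pi\preceq I$ and the trace is monotone under the Loewner order. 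Running the same argument with $P$ and $Q$ swapped (using the projection onto the negative eigenspace of $P-Q$) gives $\tr\big((P-Q)_-\big)\le\tr(Q)$, and summing yields the claim.

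Finally I would apply this estimate with $P=A_++B_+$ and $Q=A_-+B_-$, which are positive semidefinite with $A+B=P-Q$, to conclude
$$\mathcal{E}(A+B)\le\tr(P)+\tr(Q)=\tr(A_+)+\tr(A_-)+\tr(B_+)+\tr(B_-)=\mathcal{E}(A)+\mathcal{E}(B).$$

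I do not expect a genuine obstacle here. The one conceptual point deserving care is the ``mixing'' between $A$ and $B$ — a large positive eigenvalue of $A+B$ can be created mostly out of the negative parts $A_-,B_-$, so one cannot naively compare eigenvalue‑by‑eigenvalue — but this is exactly absorbed by the inequality $\tr\big((P-Q)\Pi\big)\le\tr(P\Pi)$, where discarding the nonnegative term $\tr(Q\Pi)$ costs nothing. If one prefers the looser constant and wants to avoid the decomposition $P-Q$, the same two‑line computation $\sum_{\lambda_i(A+B)\ge0}\lambda_i(A+B)=\tr\big((A+B)\Pi\big)=\tr(A\Pi)+\tr(B\Pi)\le\mathcal{E}(A)+\mathcal{E}(B)$ (via Ky Fan's principle), together with the symmetric bound on the negative eigenvalues, already gives $\mathcal{E}(A+B)\le 2(\mathcal{E}(A)+\mathcal{E}(B))$, which suffices for all later applications.
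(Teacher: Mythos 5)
Your proof is correct, and it takes a genuinely different and cleaner route than the paper. The paper proves the bound with constant $4$ by applying Weyl's inequality in the form $\lambda_i(A+B)\le \lambda_{\lceil i/2\rceil}(A)+\lambda_{i+1-\lceil i/2\rceil}(B)$ and summing separately over positive and negative eigenvalues; the factor $4$ comes from each eigenvalue of $A$ and $B$ being counted roughly twice, and doing this twice (once for each sign). You instead observe that for symmetric matrices $\mathcal{E}$ coincides with the nuclear (Schatten-$1$) norm and prove the triangle inequality $\mathcal{E}(A+B)\le\mathcal{E}(A)+\mathcal{E}(B)$ directly, via the decomposition $M=M_+-M_-$ and the key estimate $\tr\bigl((P-Q)_+\bigr)=\tr\bigl((P-Q)\Pi\bigr)\le\tr(P\Pi)\le\tr(P)$ for $P,Q\succeq 0$ and $\Pi$ the projection onto the positive eigenspace of $P-Q$. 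Every step (dropping $\tr(Q\Pi)\ge 0$, monotonicity of trace under the Loewner order applied to $P^{1/2}\Pi P^{1/2}\preceq P$) is correct, and the fallback argument via $\tr(A\Pi)\le\mathcal{E}(A)$ giving constant $2$ is also valid. Your approach yields the optimal constant $1$ and is conceptually simpler (it is just the statement that $\mathcal{E}$ is a norm), whereas the paper's Weyl-inequality argument is looser but arguably more self-contained within the toolkit it has already set up (Lemma~\ref{Weyl}); since the lemma is only used inside a $\Theta(\cdot)$ bound, either constant is fine for the downstream applications.
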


\begin{proof}
Let $m$ be the largest index such that $\lambda_m(A+B)>0$. By Weyl's inequality, for $i=1,\dots,n$, we have
$$\lambda_i(A+B)\leq \lambda_{\lceil i/2\rceil}(A)+\lambda_{i+1-\lceil i/2\rceil}(B)\leq \lambda_{\lceil i/2\rceil}(A)+\lambda_{\lceil i/2\rceil}(B),$$
hence
$$\sum_{i=1}^{m}|\lambda_{i}(A+B)|\leq \sum_{i=1}^{m}|\lambda_{\lceil i/2\rceil}(A)|+|\lambda_{\lceil i/2\rceil}(B)|\leq \sum_{i=1}^{\lceil m/2\rceil}2|\lambda_i(A)|+2|\lambda_i(B)|\leq 2(\mathcal{E}(A)+\mathcal{E}(B)).$$
We can bound the sum of negative eigenvalues of $A+B$ similarly, so in total
$$\mathcal{E}(A+B)=\sum_{i=1}^{n}|\lambda_{i}(A+B)|\leq 4(\mathcal{E}(A)+\mathcal{E}(B)).$$
\end{proof}

\section{Sampling colored multi-graphs}\label{sect:colored}

This section is concerned with edge-colored multi-graphs. Our goal is to show that if one randomly samples the colors in such a graph, then the resulting graph has large cuts. We use bold letters to distinguish random variables from deterministic ones. In the rest of this section, we work with the following setup.

\medskip

\noindent
\textbf{Setup.} Let $G$ be a multi-graph with $n$ vertices and $m$ edges, and let $\phi:E(G)\rightarrow \mathbb{N}$ be a coloring of its edges. Let $A$ be the adjacency matrix of $G$. Let $\Delta$ be the maximum degree of $G$, and assume that every color appears at most $D$ times at every vertex for some $D\leq \Delta$. Let $p\in (\frac{100}{m},0.9)$, and sample the colors appearing in $G$ independently with probability $p$. Let $\mathbf{H}$ be the sub-multi-graph of $G$ of the sampled colors, and let $\mathbf{B}$ be the adjacency matrix of $\mathbf{H}$. 

\medskip

 In our first lemma, we show that $pA$ is a good spectral approximation of $\mathbf{B}$. In order to do this, we use a result of Oliviera \cite{oliviera} on the concentration of matrix martingales. More precisely, we use the following direct consequence of Theorem 1.2 in \cite{oliviera}.

\begin{lemma}[Theorem 1.2 in \cite{oliviera}]\label{lemma:matrix_concentration}
Let $\mathbf{M}_1,\dots,\mathbf{M}_m$ be independent random $n\times n$ symmetric matrices, and let $\mathbf{P}=\mathbf{M}_1+\dots+\mathbf{M}_m$. Assume that $||\mathbf{M}_i||\leq D$ for $i=1,\dots,m$, and define
$$W=\sum_{i=1}^m\mathbb{E}(\mathbf{M}_i^2).$$
Then for all $t>0$,
$$\mathbb{P}(||\mathbf{P}-\mathbb{E}(\mathbf{P})||\geq t)\leq 2n\exp\left(-\frac{t^2}{8||W||+4Dt}\right).$$
\end{lemma}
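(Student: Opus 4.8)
The statement as quoted is Theorem~1.2 of Oliveira; to keep the argument self‑contained, the plan is to derive it as a special case of the matrix Bernstein (Freedman) inequality via the matrix Laplace transform method, which is precisely Oliveira's route (his theorem is stated for matrix martingales, and a sum of independent matrices is the simplest instance).

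First I would center. Put $\mathbf{Y}_i=\mathbf{M}_i-\mathbb{E}\mathbf{M}_i$, so that $\mathbf{Q}:=\mathbf{P}-\mathbb{E}\mathbf{P}=\sum_i\mathbf{Y}_i$ is a sum of independent, symmetric, mean‑zero matrices. Since $\|\mathbb{E}\mathbf{M}_i\|\le\mathbb{E}\|\mathbf{M}_i\|\le D$, we have $\|\mathbf{Y}_i\|\le 2D$; and $\mathbb{E}\mathbf{Y}_i^2=\mathbb{E}\mathbf{M}_i^2-(\mathbb{E}\mathbf{M}_i)^2\preceq\mathbb{E}\mathbf{M}_i^2$, so writing $W'=\sum_i\mathbb{E}\mathbf{Y}_i^2$ we get $0\preceq W'\preceq W$ and $\|W'\|\le\|W\|$. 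It then suffices to prove a one‑sided bound on $\mathbb{P}(\lambda_{\max}(\mathbf{Q})\ge t)$ and apply it also to $-\mathbf{Q}$, the union bound producing the factor $2$ in front of $n$.

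For the one‑sided estimate I would use the standard exponential bound for the top eigenvalue: for every $\theta>0$,
$$\mathbb{P}(\lambda_{\max}(\mathbf{Q})\ge t)\le e^{-\theta t}\,\mathbb{E}\,\tr\exp(\theta\mathbf{Q}).$$
The only non‑elementary ingredient is the control of $\mathbb{E}\,\tr\exp(\theta\sum_i\mathbf{Y}_i)$, which follows from Lieb's concavity theorem (the map $A\mapsto\tr\exp(H+\log A)$ is concave on positive‑definite $A$): applying it successively with Jensen's inequality over the independent $\mathbf{Y}_i$ gives subadditivity of the matrix cumulant generating function,
$$\mathbb{E}\,\tr\exp\Big(\theta\sum_i\mathbf{Y}_i\Big)\le\tr\exp\Big(\sum_i\log\mathbb{E}\,e^{\theta\mathbf{Y}_i}\Big).$$
(Equivalently one can avoid Lieb and peel off the terms one at a time using the Golden--Thompson inequality $\tr e^{A+B}\le\tr e^Ae^B$, which is Oliveira's approach and works verbatim for matrix martingales.) Each factor is then handled by the functional‑calculus version of the scalar estimate $e^x\le 1+x+\frac{e^R-R-1}{R^2}x^2$, valid for all $x\le R$: since the eigenvalues of $\theta\mathbf{Y}_i$ lie in $[-2D\theta,2D\theta]$, taking $R=2D\theta$ and using $\mathbb{E}\mathbf{Y}_i=0$,
$$\mathbb{E}\,e^{\theta\mathbf{Y}_i}\preceq I+g(\theta)\,\mathbb{E}\mathbf{Y}_i^2,\qquad g(\theta)=\frac{e^{2D\theta}-2D\theta-1}{4D^2},$$
and $\log(I+X)\preceq X$ gives $\log\mathbb{E}\,e^{\theta\mathbf{Y}_i}\preceq g(\theta)\,\mathbb{E}\mathbf{Y}_i^2$. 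Summing, using monotonicity of $\tr\exp$ under $\preceq$, and $\|W'\|\le\|W\|$, we get $\mathbb{E}\,\tr\exp(\theta\sum_i\mathbf{Y}_i)\le\tr\exp(g(\theta)W')\le n\,e^{g(\theta)\|W\|}$, hence $\mathbb{P}(\lambda_{\max}(\mathbf{Q})\ge t)\le n\exp(-\theta t+g(\theta)\|W\|)$.

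It remains to optimize over $\theta$. Using the elementary bound $g(\theta)\le\frac{\theta^2/2}{1-2D\theta/3}$ and choosing $\theta=\frac{t}{\|W\|+2Dt/3}$ yields the Bernstein‑type bound $n\exp\!\big(-\frac{3t^2}{6\|W\|+4Dt}\big)$, which is stronger than $n\exp\!\big(-\frac{t^2}{8\|W\|+4Dt}\big)$; combining this with the same bound for $-\mathbf{Q}$ gives the stated inequality with $2n$. The main obstacle — and essentially the only substantive point — is the matrix‑exponential step: subadditivity of the cumulant generating function, where Lieb's theorem (or iterated Golden--Thompson in Oliveira's treatment) is what makes matrices behave like scalars; the centering, the scalar exponential estimate, the operator monotonicity of $\log(1+x)$, and the final scalar optimization are all routine.
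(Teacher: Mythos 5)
The paper does not give its own proof of this lemma---it is quoted verbatim as a consequence of Theorem~1.2 of Oliveira (a matrix Freedman inequality for martingales, of which a sum of independent self-adjoint matrices is the simplest instance), so there is no in-paper argument to compare against. Your reconstruction via the matrix Laplace transform method is correct and self-contained. The centering step is handled properly, giving $\|\mathbf{Y}_i\|\le 2D$ and $W'\preceq W$; the subadditivity of the matrix cumulant generating function via Lieb's concavity (equivalently, as you note, iterated Golden--Thompson, which is closer to Oliveira's own argument and extends to martingales) is indeed the only non-elementary ingredient; the Bennett-type scalar estimate $e^x\le 1+x+\frac{e^R-R-1}{R^2}x^2$ for $x\le R$, combined with $\log(I+X)\preceq X$ and monotonicity of $\tr\exp$, gives the per-term bound; and the optimization with $R=2D$ yields $2n\exp\bigl(-\tfrac{3t^2}{6\|W\|+4Dt}\bigr)$, which dominates the stated $2n\exp\bigl(-\tfrac{t^2}{8\|W\|+4Dt}\bigr)$ since $24\|W\|+12Dt\ge 6\|W\|+4Dt$. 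Two small points worth spelling out if you write this up in full: the scalar estimate rests on monotonicity of $x\mapsto (e^x-1-x)/x^2$ on all of $\mathbb{R}$, and the passage to $-\mathbf{Q}$ uses $\mathbb{E}(-\mathbf{Y}_i)^2=\mathbb{E}\mathbf{Y}_i^2$ so the same $W'$ controls both tails; both are routine. In short, your route buys a self-contained derivation (and a slightly stronger constant), whereas the paper simply outsources to Oliveira.
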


\begin{lemma}\label{lemma:colored_sampling}
For every $t\geq 0$, 
$$\mathbb{P}\left(||pA-\mathbf{B}||\geq t\right)\leq 2n\exp\left(-\frac{t^2}{8p\Delta D+4Dt}\right).$$
\end{lemma}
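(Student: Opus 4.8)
The plan is to apply Oliveira's concentration bound (Lemma \ref{lemma:matrix_concentration}) to the decomposition of $\mathbf{B}$ into independent pieces indexed by colors. Write $C=\phi(E(G))$ for the set of colors that actually occur, and for $c\in C$ let $A_c$ denote the adjacency matrix of the spanning sub-multi-graph of $G$ consisting of the edges of color $c$, so that $A=\sum_{c\in C}A_c$. Let $\mathbf{1}_c$ be the indicator of the event that color $c$ is sampled; these are independent Bernoulli variables with mean $p$, and with $\mathbf{M}_c:=\mathbf{1}_c A_c$ we have $\mathbf{B}=\sum_{c\in C}\mathbf{M}_c$ and $\mathbb{E}(\mathbf{B})=p\sum_{c\in C}A_c=pA$. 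Thus the quantity we want to bound is exactly $\mathbb{P}(\|\mathbf{B}-\mathbb{E}(\mathbf{B})\|\geq t)$, which is what Lemma \ref{lemma:matrix_concentration} controls. (That lemma is phrased for a list of $m$ matrices, but it applies verbatim to the finite family $(\mathbf{M}_c)_{c\in C}$, and its conclusion does not reference the number of matrices.)

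To invoke the lemma I would verify its two hypotheses. For the uniform bound, $\|\mathbf{M}_c\|\leq\|A_c\|\leq D$: each $A_c$ is a symmetric matrix with nonnegative entries whose row sums are the color-$c$ degrees of the vertices, and these are at most $D$ by assumption, so the spectral radius of $A_c$ is at most $D$. For the variance proxy, $W=\sum_{c\in C}\mathbb{E}(\mathbf{M}_c^2)=p\sum_{c\in C}A_c^2$ (using $\mathbf{1}_c^2=\mathbf{1}_c$), and I would bound $\|W\|$ by the largest row sum of the nonnegative symmetric matrix $\sum_{c}A_c^2$: for a vertex $u$ this row sum equals $\sum_{c}\sum_{w}A_c(u,w)\deg_c(w)\leq D\sum_{c}\sum_{w}A_c(u,w)=D\deg_G(u)\leq D\Delta$, using $\deg_c(w)\leq D$ and the fact that the color-degrees at $u$ sum to $\deg_G(u)$. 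Hence $\|W\|\leq p\Delta D$.

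Feeding $\|\mathbf{M}_c\|\leq D$ and $\|W\|\leq p\Delta D$ into Lemma \ref{lemma:matrix_concentration}, and using that $s\mapsto\exp(-t^2/s)$ is nondecreasing, gives for every $t\geq 0$
$$\mathbb{P}(\|pA-\mathbf{B}\|\geq t)\leq 2n\exp\left(-\frac{t^2}{8\|W\|+4Dt}\right)\leq 2n\exp\left(-\frac{t^2}{8p\Delta D+4Dt}\right),$$
which is the claim. The only step carrying any real content is the estimate $\|\sum_{c}A_c^2\|\leq\Delta D$; this is exactly where the hypothesis that no color appears more than $D$ times at any vertex is needed, and it is the reason a naive second-moment bound --- which would only register the comparatively small diagonal of $\sum_c A_c^2$ --- does not by itself suffice. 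Everything else is routine bookkeeping.
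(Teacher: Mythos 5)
Your proof is correct, and the skeleton is the same as the paper's: decompose $\mathbf{B}$ into the independent pieces $\mathbf{M}_c=\mathbf{1}_cA_c$, observe $\mathbb{E}(\mathbf{B})=pA$, bound $\|\mathbf{M}_c\|\le\|A_c\|\le D$ via the maximum color-degree, and feed the variance proxy $W=p\sum_cA_c^2$ into Oliveira's inequality. The one place you diverge is the bound $\|W\|\le p\Delta D$. The paper proves this by estimating $\tr(W^k)$ for all $k$, interpreting $\tr\bigl((\sum_cA_c^2)^k\bigr)$ as a count of closed walks of length $2k$ in which consecutive steps come in monochromatic pairs, and bounding that count by $n\Delta^kD^k$; letting $k\to\infty$ yields the spectral bound. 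You instead note that $\sum_cA_c^2$ is symmetric with nonnegative entries, so its spectral radius is at most its maximum row sum, and compute that row sum directly as $\sum_c\sum_wA_c(u,w)\deg_c(w)\le D\deg_G(u)\le\Delta D$. Both arguments establish exactly the same estimate, but yours is shorter and reuses the very same row-sum bound already invoked for $\|A_c\|$, so it is arguably the cleaner of the two; the trace-moment version has the (unused here) advantage of generalizing to matrices with entries of mixed sign. Either way, the lemma follows.
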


\begin{proof}
Let $1,\dots,m$ be the colors of $G$, let $A_i$ be  the adjacency matrix of color $i$, and let $\mathbf{I}_i\in \{0,1\}$ be the indicator random variable that color $i$ is sampled. Then setting $\mathbf{M}_i=\mathbf{I}_i A_i$, we have $\mathbf{B}=\mathbf{M}_1+\dots+\mathbf{M}_n$. Moreover, $||\mathbf{M}_i||\leq ||A_i||\leq D$, as the spectral radius is upper bounded by the maximum $\ell_1$-norm of the row vectors, which in turn is the maximum degree of the graph of color $i$. Now let us bound $||W||$, where $$W=\sum_{i=1}^{m}\mathbb{E}(\mathbf{M}_i^2)=p\sum_{i=1}^m A_i^2.$$
Let $\mu_1\geq \dots\geq \mu_n\geq 0$ be the eigenvalues of $W$. Then for every $k$,
$$\sum_{i=1}^n \mu_i^k=\tr(W^k)=\sum_{v_1,\dots,v_k\in V(G)}W(v_1,v_2)\dots W(v_{k-1},v_k)W(v_k,v_1).$$
Observe that $A_i^2(a,b)$ is the number of walks of length 2 between $a$ and $b$ using edges of color $i$, so $W(a,b)/p=\sum_{i=1}^m A^2(a,b)$ is the number of monochromatic walks of length 2 between $a$ and $b$ in $G$. Hence, $\tr(W^k/p^k)$ is the number of closed walks $(v_i,e_i,w_i,f_i)_{i=1,\dots,k}$, where $v_i,w_i$ are vertices, $e_i,f_i$ are edges, $v_i,w_i$ are endpoints of $e_i$, $w_i,v_{i+1}$ are endpoints of $f_i$ (where the indices are meant modulo $k$), and $e_i$ and $f_i$ has the same color. The total number of such closed walks is upper bounded by $n\Delta^k D^k$, as there are $n$ choices for $v_1$, if $v_i$ is already known there are at most $\Delta$ choices for $e_i$ and $w_i$, and as the color of $f_i$ is the same as of $e_i$, there are at most $D$ choices for $f_i$ and $v_{i+1}$. Hence, we get
$$\mu_1^k/p^k\leq n\Delta^k D^k.$$
As this holds for every $k$, we deduce that $||W||=\mu_1\leq p\Delta D$. But then the inequality 
$$\mathbb{P}\left(||pA-\mathbf{B}||\geq t\right)\leq 2n\exp\left(-\frac{t^2}{8p\Delta D+4Dt}\right)$$
is a straightforward consequence of Lemma \ref{lemma:matrix_concentration}.

\end{proof}

Next, we use the previous lemma to show that the expected energy of $pA-\mathbf{B}$ is large. This is somewhat counter-intuitive, as the previous lemma implies that none of the eigenvalues of $pA-\mathbf{B}$ is large. However, we show that the Frobenius norm of $pA-\mathbf{B}$ must be large in expectation, which tells us that the sum of the squares of the eigenvalues is large. Given none of the eigenvalues is large, this is only possible if the energy of $pA-\mathbf{B}$ is large as well.

\begin{lemma}\label{lemma:large_energy}
$$\mathbb{E}(\mathcal{E}(pA-\mathbf{B}))=\Omega\left(\frac{pm}{\sqrt{\Delta D}\log m}\right).$$
\end{lemma}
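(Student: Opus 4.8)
The plan is to deduce the bound from the elementary estimate $\mathcal{E}(M)\ge ||M||_F^2/||M||$, valid for every nonzero symmetric matrix $M$ (it follows at once from $||M||_F^2=\sum_i\lambda_i(M)^2\le \big(\max_i|\lambda_i(M)|\big)\sum_i|\lambda_i(M)|$), applied to $M=pA-\mathbf{B}$; this matrix is nonzero since $A\ne 0$ has integer entries while $0<p<1$. Thus it suffices to bound $\mathbb{E}\,||pA-\mathbf{B}||_F^2$ from below and $||pA-\mathbf{B}||$ from above with high probability, and these two tasks are essentially independent.

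For the Frobenius norm I would write $pA-\mathbf{B}=\sum_i(p-\mathbf{I}_i)A_i$, where $A_i$ is the adjacency matrix of the $i$-th color class and $\mathbf{I}_i\in\{0,1\}$ is the indicator that color $i$ is sampled. Expanding $||pA-\mathbf{B}||_F^2$ entrywise and using that the $\mathbf{I}_i$ are independent with mean $p$, every cross term has expectation zero, so $\mathbb{E}\,||pA-\mathbf{B}||_F^2=p(1-p)\sum_i||A_i||_F^2$. Since the entries of $A_i$ are non-negative integers, $||A_i||_F^2\ge\sum_{u,v}A_i(u,v)=2e_i$, where $e_i$ is the number of color-$i$ edges, and summing over colors gives $\mathbb{E}\,||pA-\mathbf{B}||_F^2\ge 2p(1-p)m\ge\tfrac15 pm$, using $p<0.9$.

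For the spectral norm I would apply Lemma~\ref{lemma:colored_sampling} with $t=\Lambda:=4\sqrt{p\Delta D\log(2n/\delta)}+8D\log(2n/\delta)$ and $\delta:=p/(20\Delta)$, checking that this choice of $t$ makes the exponent in the lemma at least $\log(2n/\delta)$, so that $\mathbb{P}(||pA-\mathbf{B}||\ge\Lambda)\le\delta$. We may assume $G$ has no isolated vertices — they affect neither the energy of $pA-\mathbf{B}$ nor $\Delta$, $D$ or $m$ — hence $n\le 2m$ and $\Delta,D<n$, while $1/p\le m/100$; therefore $\log(2n/\delta)=O(\log m)$, and then $p<1$ together with $D\le\Delta$ (which gives $D\le\sqrt{\Delta D}$) yields $\Lambda=O(\sqrt{\Delta D}\,\log m)$.

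It remains to combine the pieces. Deterministically $||pA-\mathbf{B}||_F^2\le ||A||_F^2\le 2m\Delta$, so the bad event $\{||pA-\mathbf{B}||\ge\Lambda\}$ contributes at most $2m\Delta\cdot\delta=pm/10$ to $\mathbb{E}\,||pA-\mathbf{B}||_F^2$, whence
$$\mathbb{E}\big(\mathcal{E}(pA-\mathbf{B})\big)\ \ge\ \frac1\Lambda\,\mathbb{E}\big(||pA-\mathbf{B}||_F^2\,\mathbf{1}_{||pA-\mathbf{B}||<\Lambda}\big)\ \ge\ \frac1\Lambda\Big(\tfrac15 pm-\tfrac1{10}pm\Big)\ =\ \frac{pm}{10\Lambda}\ =\ \Omega\!\left(\frac{pm}{\sqrt{\Delta D}\,\log m}\right).$$
The one delicate point is this final balancing act: the failure probability $\delta$ must be small enough that the crude bound $2m\Delta$ on $||pA-\mathbf{B}||_F^2$ does not overwhelm the main term $\Omega(pm)$. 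This is exactly where the hypotheses $p\ge 100/m$ and $D\le\Delta$ enter, and affording it costs only logarithmic factors inside $\Lambda$; everything else is routine.
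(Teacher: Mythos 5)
Your proposal is correct and follows essentially the same route as the paper's proof: lower-bound $\mathcal{E}$ via $\mathcal{E}(M)\ge \|M\|_F^2/\|M\|$, show $\mathbb{E}\|pA-\mathbf{B}\|_F^2=\Omega(pm)$ using variance over the independent color indicators, apply Lemma~\ref{lemma:colored_sampling} to control $\|pA-\mathbf{B}\|$ outside a rare bad event, and absorb the bad event by a crude deterministic bound on $\|pA-\mathbf{B}\|_F^2$. The only (immaterial) differences are cosmetic: you parametrize the spectral-norm threshold as $\Lambda=4\sqrt{p\Delta D\log(2n/\delta)}+8D\log(2n/\delta)$ rather than the paper's $t=20\sqrt{\Delta D}\log m$, and you use the slightly sharper deterministic bound $\|pA-\mathbf{B}\|_F^2\le 2m\Delta$ in place of the paper's $m^2$.
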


\begin{proof}
Let $\lambda_1\geq\dots\geq\lambda_n$ be the eigenvalues of $pA-\mathbf{B}$, let $\lambda=||pA-\mathbf{B}||$, and $t=20\log m \sqrt{\Delta D}$. Without loss of generality, we may assume that $G$ has no isolated vertices, so $m\leq 2n$. But then, by the previous lemma, $|\lambda_i|\leq \lambda\leq t$ with probability at least 
$$1-2n\exp\left(-\frac{t^2}{8p\Delta D+4Dt}\right).$$
Here, 
$$\frac{t^2}{8p\Delta D+4Dt}=\frac{400(\log m)^2 \Delta D}{8p\Delta D+80 D (\log m)\sqrt{\Delta D}}\geq \frac{400(\log m)^2 \Delta D}{8p\Delta D+80(\log m) \Delta D}> 4\log m.$$
Thus, we conclude
$$\mathbb{P}(\lambda>t)\leq  2nm^{-4}<m^{-2}.$$ 
Consider the Frobenius norm of $pA-\mathbf{B}$. We have
\begin{equation}\label{equ:frobenius}
    ||pA-\mathbf{B}||_F^2=\sum_{i=1}^{n}\lambda_i^2\leq \lambda\sum_{i=1}^{n}|\lambda_i|=\lambda\cdot \mathcal{E}(pA-\mathbf{B}).
\end{equation}
Let $\mathbf{Y}=||pA-\mathbf{B}||_F^2$.  If $u,v\in V(G)$, then $\mathbf{B}(u,v)=z_1\mathbf{I}_1+\dots+z_r\mathbf{I}_r$, where $\mathbf{I}_1,\dots,\mathbf{I}_r$ are the independent indicator random variables of colors appearing between $u$ and $v$ in $G$, and $z_1,\dots,z_r$ are the multiplicities of colors.  Note that $z_i\geq 1$ and $z_1+\dots+z_r=A(u,v)$. Hence, $$\mathbb{E}((pA(u,v)-\mathbf{B}(u,v))^2)=\mbox{Var}(\mathbf{B}(u,v))=\sum_{i=1}^{r}\mbox{Var}(z_i\mathbf{I}_r)=\sum_{i=1}^{r}(p-p^2)z_i^2\geq (p-p^2)A(u,v).$$
Therefore, 
$$\mathbb{E}(\mathbf{Y})\geq \sum_{u,v\in V(G)}(p-p^2) A(u,v)=2(p-p^2)m=\Omega(pm).$$
By (\ref{equ:frobenius}), we can write
$$\mathbb{E}(\mathcal{E}(pA-\mathbf{B}))\geq \mathbb{E}\left(\frac{\mathbf{Y}}{\lambda}\right).$$
In order to bound the right-hand-side, we condition on the event $(\lambda\leq t)$: 
$$\mathbb{E}\left(\frac{\mathbf{Y}}{\lambda}\right)\geq \mathbb{P}(\lambda\leq t)\cdot \mathbb{E}\left(\frac{\mathbf{Y}}{\lambda}| \lambda\leq t\right)\geq \frac{1}{t}\mathbb{P}(\lambda\leq t)\cdot \mathbb{E}(\mathbf{Y}|\lambda\leq t).$$
Here, using that $\mathbb{P}(\lambda>t)\leq m^{-2}$ and  $\max(\mathbf{Y})\leq m^2$, we can further write
$$\mathbb{P}(\lambda\leq t)\cdot \mathbb{E}(\mathbf{Y}|\lambda\leq t)=\mathbb{E}(\mathbf{Y})-\mathbb{P}(\lambda>t)\cdot\mathbb{E}(\mathbf{Y}|\lambda>t)\geq \mathbb{E}(\mathbf{Y})-\mathbb{P}(\lambda>t)\cdot\max(\mathbf{Y})\geq \Omega(pm).$$
Thus, $\mathbb{E}(\mathcal{E}(pA-\mathbf{B}))\geq \Omega(\frac{pm}{t})$, finishing the proof.
\end{proof}

Our final technical lemma shows that $M+\textbf{B}$ has large expected surplus for any fixed matrix $M$.

\begin{lemma}\label{lemma:ultimate}
Let $M\in \mathbb{R}^{V(G)\times V(G)}$ be symmetric with only zeros in the diagonal. Then
$$\mathbb{E}(\surp(M+\mathbf{B}))=\Omega\left(\frac{pm}{\sqrt{\Delta D}(\log m)^2}\right).$$
\end{lemma}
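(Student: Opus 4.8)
The plan is to lower bound $\surp(M+\mathbf{B})$ by the energy $\mathcal{E}(M+\mathbf{B})$ via Corollary~\ref{cor:surplus_energy} — applicable because $M+\mathbf{B}$ has zero diagonal, both $M$ and every adjacency matrix here having zero diagonal — and then to show that $\mathbb{E}(\mathcal{E}(M+\mathbf{B}))$ is large. Lemma~\ref{lemma:large_energy} already gives that the fluctuation $pA-\mathbf{B}$ has large expected energy, so the natural move is to strip off the deterministic part $pA+M$ using subadditivity of energy (Lemma~\ref{lemma:energy_additive}): from $\mathbf{B}-pA=(M+\mathbf{B})-(pA+M)$ one gets, pointwise,
$$\mathcal{E}(M+\mathbf{B})\ \geq\ \tfrac14\,\mathcal{E}(pA-\mathbf{B})-\mathcal{E}(pA+M),$$
and feeding this into Corollary~\ref{cor:surplus_energy} and then taking expectations with Lemma~\ref{lemma:large_energy} yields
$$\mathbb{E}(\surp(M+\mathbf{B}))\ =\ \Omega\!\left(\frac{1}{\log m}\left(\frac{pm}{\sqrt{\Delta D}\,\log m}-\mathcal{E}(pA+M)\right)\right).$$

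The main obstacle is that $M$ is arbitrary, so the error term $\mathcal{E}(pA+M)$ above need not be small: an adversarial $M$ can make $pA+M$ have huge energy, rendering this estimate vacuous. The fix is to use convexity of $\surp$. Since $\surp$ is a maximum of functions that are linear in the matrix entries, it is convex, so Jensen's inequality gives $\mathbb{E}(\surp(M+\mathbf{B}))\geq\surp(M+\mathbb{E}\mathbf{B})=\surp(pA+M)$, and Corollary~\ref{cor:surplus_energy} then gives $\surp(pA+M)=\Omega(\mathcal{E}(pA+M)/\log m)$. Thus precisely when the energy estimate degrades, this Jensen estimate takes over.

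So I would proceed as follows. First, reduce to the case that $G$ has no isolated vertices, so that $n\leq 2m$ and the $\log n$ in Corollary~\ref{cor:surplus_energy} may be taken as $O(\log m)$; deleting an isolated vertex $v$ of $G$ erases row and column $v$ of both $A$ and $\mathbf{B}$ and cannot increase $\surp$, since for any $\pm 1$ assignment to the remaining coordinates one may choose $x(v)$ so that the $v$-terms contribute nonnegatively. Next, record the two bounds — the Jensen bound $\mathbb{E}(\surp(M+\mathbf{B}))=\Omega(\mathcal{E}(pA+M)/\log m)$, and the energy bound obtained by applying Corollary~\ref{cor:surplus_energy} pointwise to $M+\mathbf{B}$, substituting the displayed energy inequality, and taking expectations via Lemma~\ref{lemma:large_energy}. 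Finally, combine them: set $L$ to be a small enough constant times $pm/(\sqrt{\Delta D}\log m)$ so that the energy bound reads $\mathbb{E}(\surp(M+\mathbf{B}))=\Omega((L-\mathcal{E}(pA+M))/\log m)$; if $\mathcal{E}(pA+M)\geq L/2$ the Jensen bound already gives $\Omega(L/\log m)$, and otherwise the energy bound gives $\Omega(L/\log m)$, so in all cases $\mathbb{E}(\surp(M+\mathbf{B}))=\Omega(L/\log m)=\Omega(pm/(\sqrt{\Delta D}(\log m)^2))$. The only genuinely nontrivial step is the one flagged — neutralizing the arbitrary shift $M$ — resolved by playing the convexity bound against the subadditivity bound; the rest is bookkeeping with lemmas already established.
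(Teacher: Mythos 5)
Your proof is correct and essentially the same as the paper's: the paper also case-splits on whether $\mathcal{E}(M+pA)$ is large (there, against the threshold $t/8$), using the identical decomposition via Lemma~\ref{lemma:energy_additive} in one case, and in the other case proving the inequality $\mathbb{E}(\surp(M+\mathbf{B}))\geq\surp(M+pA)$ by fixing a maximizing $x\in\{-1,1\}^{V(G)}$ and taking expectations term by term — which is precisely the Jensen inequality for the convex function $\surp$ that you invoke directly. The only difference is cosmetic.
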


\begin{proof}
Let $$t=\frac{\varepsilon pm}{\sqrt{\Delta D}\log m},$$
where $\varepsilon>0$ is the constant hidden by the $\Omega(.)$ notation in Lemma \ref{lemma:large_energy}. Thus, $$\mathbb{E}(\mathcal{E}(\mathbf{B}-pA))\geq t.$$
First, assume that $\mathcal{E}(M+pA)>\frac{t}{8}$. Then by Corollary \ref{cor:surplus_energy}, we have 
$\surp(M+pA)=\Omega(\frac{t}{\log n})$.
This means that there exists a vector $x\in \{-1,1\}^{V(G)}$ such that 
$$\surp(M+pA)=-\frac{1}{2}\sum_{u,v\in V(G)}(M(u,v)+pA(u,v))x(u)x(v)=\Omega\left(\frac{t}{\log n}\right).$$
But then
\begin{align*}
    \mathbb{E}(\surp(M+\mathbf{B}))&\geq \mathbb{E}\left[-\frac{1}{2}\sum_{u,v\in V(G)}(M(u,v)+\mathbf{B}(u,v))x(u)x(v)\right]=\\
    &-\frac{1}{2}\sum_{u,v\in V(G)}(M(u,v)+pA(u,v))x(u)x(v)=\surp(M+pA).
\end{align*}
so we are done.

Hence, we may assume that $\mathcal{E}(M+pA)\leq \frac{t}{8}$. Writing $(\mathbf{B}-pA)=(M+\mathbf{B})+(-pA-M)$, we can apply Lemma \ref{lemma:energy_additive} to deduce that $\mathcal{E}(\mathbf{B}-pA)\leq 4(\mathcal{E}(M+\mathbf{B})+\mathcal{E}(M+pA))$. From this,
$$\mathbb{E}(\mathcal{E}(M+\mathbf{B}))\geq \mathbb{E}\left(\frac{1}{4}\mathcal{E}(\mathbf{B}-pA)-\mathcal{E}(M+pA)\right)\geq \frac{t}{8}.$$
 Therefore, by Corollary \ref{cor:surplus_energy},
$$\mathbb{E}(\surp(M+\mathbf{B}))\geq \Omega\left(\mathbb{E}\left(\frac{\mathcal{E}(M+\mathbf{B})}{\log n}\right)\right)=\Omega\left(\frac{pm}{\sqrt{\Delta D}(\log m)^2}\right).$$
\end{proof}

\section{MaxCut in 3-graphs}\label{sect:proofs}

In this section, we prove our main theorems. We start with the proof of Theorem \ref{thm:main}, which we restate here for the reader's convenience.

\begin{theorem}\label{lemma:main_cut}
 Let $H$ be a 3-multigraph. Assume that $H$ contains an induced subghypergraph with $m$ edges, maximum degree $\Delta$ and maximum co-degree $D$. Then 
    $$\surp_3(H)\geq \Omega\left(\frac{m}{\sqrt{\Delta D}(\log m)^2}\right).$$
\end{theorem}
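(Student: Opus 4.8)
The plan is to follow the outline from the introduction: choose the first part of the $3$-cut at random, reduce the rest to a $\mathrm{MaxCut}$ problem on a random multigraph, and control the resulting surplus via the energy. Throughout I may assume $H$ has no isolated vertices, and, after a short preliminary reduction, that $H$ itself has $m$ edges, maximum degree $\Delta$ and maximum co-degree $D$; write $V=V(H)$, $n=|V|$, so $n=O(m)$ and $\log n=O(\log m)$. (If $H$ has many more vertices than the dense induced subhypergraph, then $\surp_3(H)=\Omega(n)$ by Lemma~\ref{lemma:surplus>n}, which one checks already suffices; and if $\sqrt{\Delta D}\,(\log m)^2\ge\sqrt m$, the conclusion follows from the standard estimate $\surp_3(H)=\Omega(\sqrt m)$. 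So we may also assume $t:=20\log m\,\sqrt{\Delta D}\le\sqrt m$, hence $t^2\ll m$.)

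\textbf{Step 1 (reduction to a random multigraph).} Put each vertex of $V$ into a random set $\mathbf X$ independently with probability $p=\tfrac13$, and let $G^\ast$ be the multigraph on $V\setminus\mathbf X$ consisting, for every $e\in E(H)$ with $|e\cap\mathbf X|=1$, of the edge $e\setminus\mathbf X$. For any bipartition $(\mathbf Y,\mathbf Z)$ of $V\setminus\mathbf X$ the $3$-cut $(\mathbf X,\mathbf Y,\mathbf Z)$ of $H$ has as many edges as the cut $(\mathbf Y,\mathbf Z)$ of $G^\ast$, and since $\Pr[|e\cap\mathbf X|=1]=3p(1-p)^2=\tfrac49$, we get $\mathbb E[\tfrac12 e(G^\ast)]=\tfrac29 m$. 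Hence
\[
\surp_3(H)\ \ge\ \mathbb E_{\mathbf X}\!\big[\mc(G^\ast)\big]-\tfrac29 m\ =\ \mathbb E_{\mathbf X}\!\big[\surp(G^\ast)\big],
\]
so it suffices to prove $\mathbb E_{\mathbf X}[\surp(G^\ast)]=\Omega\big(m/(\sqrt{\Delta D}(\log m)^2)\big)$. For this, let $G$ be the edge-colored multigraph on $V$ in which every $\{u,v,w\}\in E(H)$ contributes the three edges $uv,uw,vw$, colored $w,v,u$ respectively; then $e(G)=3m$, $G$ has maximum degree at most $2\Delta$, and every color occurs at most $D$ times at each vertex. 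Sampling the colors of $G$ with probability $p=\tfrac13$ is the same as sampling $\mathbf X$; if $\mathbf B$ is the adjacency matrix of the sub-multigraph of $G$ spanned by the sampled colors, one checks that $G^\ast$ is exactly the subgraph of this multigraph induced on $V\setminus\mathbf X$, i.e. $A(G^\ast)=\widehat{\mathbf B}$, where $\widehat{\mathbf B}$ is $\mathbf B$ with the rows and columns indexed by $\mathbf X$ zeroed out. Using independence of the samplings of distinct vertices, $\mathbb E[\widehat{\mathbf B}]=p(1-p)^2 A(G)$, where $A(G)$ is the zero-diagonal adjacency matrix of the $2$-shadow multigraph of $H$.

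\textbf{Step 2 (two cases, as in the proof of Lemma~\ref{lemma:ultimate}).} Fix a threshold $\theta=\varepsilon m/t$ with $\varepsilon>0$ a small absolute constant. If $\mathcal E(A(G))\ge\theta$, pick $x^\ast\in\{-1,1\}^{V}$ attaining $\surp\big(p(1-p)^2 A(G)\big)$; since $\widehat{\mathbf B}$ vanishes outside $V\setminus\mathbf X$, the restriction of $x^\ast$ is a feasible cut of $G^\ast$, so $\surp(G^\ast)\ge -\tfrac12 (x^\ast)^{T}\widehat{\mathbf B}\,x^\ast$, and taking expectations,
\[
\mathbb E_{\mathbf X}\!\big[\surp(G^\ast)\big]\ \ge\ -\tfrac12 (x^\ast)^{T}\mathbb E[\widehat{\mathbf B}]\,x^\ast\ =\ p(1-p)^2\,\surp\big(A(G)\big)\ =\ \Omega\!\left(\frac{\mathcal E(A(G))}{\log n}\right)
\]
by Corollary~\ref{cor:surplus_energy}, which is $\Omega(\theta/\log m)=\Omega(m/(t\log m))$, the required bound. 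If $\mathcal E(A(G))<\theta$, then also $\|A(G)\|\le\tfrac12\mathcal E(A(G))<\tfrac{\theta}{2}$, and I compare $\widehat{\mathbf B}$ with its mean: by Lemma~\ref{lemma:colored_sampling}, $\|\mathbf B-pA(G)\|\le t$ with probability $1-m^{-2}$, and (by interlacing together with a Chernoff/matrix-Bernstein estimate for random vertex subsampling, using that $A(G)$ has entries $\le D$ and row sums $\le 2\Delta$) one gets $\|\widehat{\mathbf B}-\mathbb E\widehat{\mathbf B}\|=O(t)$ with comparable probability; meanwhile $\mathbb E\big[\|\widehat{\mathbf B}-\mathbb E\widehat{\mathbf B}\|_F^2\big]=\Omega(m)$, since $\mathbb E[(\widehat{\mathbf B}-\mathbb E\widehat{\mathbf B})(u,v)^2]=\Omega(A(G)(u,v))$ and $\sum_{u,v}A(G)(u,v)=6m$. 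Exactly as in the proof of Lemma~\ref{lemma:large_energy}, this yields $\mathbb E\big[\mathcal E(\widehat{\mathbf B}-\mathbb E\widehat{\mathbf B})\big]=\Omega(m/t)$, and by Lemma~\ref{lemma:energy_additive},
\[
\mathcal E(G^\ast)=\mathcal E(\widehat{\mathbf B})\ \ge\ \tfrac14\,\mathcal E\!\big(\widehat{\mathbf B}-\mathbb E\widehat{\mathbf B}\big)-\mathcal E\!\big(\mathbb E\widehat{\mathbf B}\big)\ \ge\ \tfrac14\,\mathcal E\!\big(\widehat{\mathbf B}-\mathbb E\widehat{\mathbf B}\big)-p(1-p)^2\theta,
\]
so for $\varepsilon$ small enough $\mathbb E[\mathcal E(G^\ast)]=\Omega(m/t)$, and Corollary~\ref{cor:surplus_energy} gives $\mathbb E[\surp(G^\ast)]=\Omega(m/(t\log n))$, as needed.

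\textbf{Main obstacle.} The delicate part is the low-energy case. The shadow multigraph $G$ may itself have spectral radius as large as $\Omega(\Delta)$, so one cannot bound $\|A(G^\ast)\|$ or $\|\widehat{\mathbf B}\|$ by a polylog multiple of $\sqrt{\Delta D}$ and simply read the energy off the Frobenius norm; it is essential to (i) split on the \emph{deterministic} quantity $\mathcal E(A(G))$ so that in the high-energy case the direct bound $\surp(A(G))=\Omega(\mathcal E(A(G))/\log n)$ takes over, and (ii) in the low-energy case compare $\widehat{\mathbf B}$ with its mean $p(1-p)^2 A(G)$ (whose energy is then below $\theta$) rather than with $0$ or with $pA(G[V\setminus\mathbf X])$ (whose energies one cannot control) — this is precisely the role of the free offset matrix in Lemma~\ref{lemma:ultimate}. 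The other nontrivial point is establishing $\|\widehat{\mathbf B}-\mathbb E\widehat{\mathbf B}\|=O(t)$, i.e.\ controlling the passage to the induced subgraph on $V\setminus\mathbf X$; this is where the interlacing remark from the introduction, combined with a vertex-subsampling concentration bound, enters. The remaining ingredients — removing isolated vertices, the reduction to $e(H)=m$, conditioning on the concentration event (absorbing the $m^{-2}$ failure probability against the trivial bound $\|\widehat{\mathbf B}\|_F^2\le m^2$ as in Lemma~\ref{lemma:large_energy}), and checking $\log n=O(\log m)$ via Lemmas~\ref{lemma:m/n} and~\ref{lemma:surplus>n} — are routine.
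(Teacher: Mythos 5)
Your overall strategy --- sample $\mathbf{X}$, reduce to MaxCut of the random multigraph $G^*$, and split on the deterministic quantity $\mathcal{E}(A(G))$ --- is close in spirit to the paper's, and your high-energy case is correct. The gap is in the low-energy case: you claim $\|\widehat{\mathbf{B}}-\mathbb{E}\widehat{\mathbf{B}}\|=O(t)$, i.e.\ that the random principal submatrix $\widehat{\mathbf{B}}$ concentrates in spectral norm around $p(1-p)^2 A(G)$. This is false in general. Writing
\[
\widehat{\mathbf{B}}-\mathbb{E}\widehat{\mathbf{B}}
 =\big(\widehat{\mathbf{B}}-pA(G)[V\setminus\mathbf{X}]\big)
  +p\big(A(G)[V\setminus\mathbf{X}]-(1-p)^2 A(G)\big),
\]
the first bracket has norm at most $t$ by interlacing from Lemma~\ref{lemma:colored_sampling}, but the second bracket can have norm of order $\|A(G)\|$. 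For instance, if $H$ is a Steiner triple system then $A(G)$ is the adjacency matrix of $K_n$, and testing against the indicator vector of $V\setminus\mathbf{X}$ shows that $\big\|A(G)[V\setminus\mathbf{X}]-\tfrac49 A(G)\big\|=\Theta(n)$, which dwarfs $t=\Theta(\sqrt{n}\log n)$ even though $\mathcal{E}(A(G))=2(n-1)<\theta$ so this example lands in your low-energy branch. The vertex-subsampling matrix-Bernstein estimate you invoke cannot repair this: after decomposing into rank-one increments, its variance proxy is governed by $\|A(G)^2\|=\|A(G)\|^2$, which in the low-energy regime can still be as large as $\theta^2/4\gg t^2$. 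So as written the low-energy branch fails at precisely the step your ``main obstacle'' paragraph flags.

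The paper sidesteps vertex-subsampling concentration entirely: it fixes a set $Q$ so that at least $m/3$ edges of $H[S_0]$ meet $Q$ in exactly one vertex and then conditions on $\mathbf{X}\setminus Q$. Under the conditioning, the vertex set $V=(S_0\setminus Q)\setminus\mathbf{X}$ and the matrix $M$ are deterministic, only the colors in $Q$ are random, and Lemma~\ref{lemma:ultimate} applies with no random submatrix to control. Your one-shot version can nonetheless be repaired without introducing $Q$: compare $\widehat{\mathbf{B}}$ to the random matrix $pA(G)[V\setminus\mathbf{X}]$ rather than to its mean. Interlacing gives $\|\widehat{\mathbf{B}}-pA(G)[V\setminus\mathbf{X}]\|\le\|\mathbf{B}-pA(G)\|\le t$; conditional on $u,v\notin\mathbf{X}$ the color indicators remain independent Bernoulli$(p)$, so $\mathbb{E}\|\widehat{\mathbf{B}}-pA(G)[V\setminus\mathbf{X}]\|_F^2=\Omega(m)$ exactly as in Lemma~\ref{lemma:large_energy}; and Cauchy interlacing of eigenvalues gives $\mathcal{E}\big(pA(G)[V\setminus\mathbf{X}]\big)\le p\,\mathcal{E}(A(G))<p\theta$, after which Lemma~\ref{lemma:energy_additive} closes the argument as in your outline. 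With this one substitution your proposal becomes a correct, genuinely un-conditioned route to the theorem.
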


\begin{proof}
Let $G$ be the underlying multi-graph of $H$. For every edge $\{u,v,w\}\in E(H)$, color the copy of $\{u,v\}\in E(G)$ coming from $\{u,v,w\}$ with color $w$. Let $S_0\subset V(H)$ be such that $H[S_0]$ has $m$ edges, maximum degree $\Delta$, and maximum co-degree $D$. Let $Q\subset S_0$ be an arbitrary set such that there are at least $m/3$ edges of $H[S_0]$ containing exactly one vertex of $Q$. There exists such a $Q$, as for a random set of vertices of $S_0$, sampled with probability $1/3$, there are $4m/9$ such edges in expectation. Let $S=S_0\setminus Q$.

Sample the vertices of $H$ independently with probability $p=1/3$, and let $\mathbf{X}$ be the set of sampled vertices. First, we condition on $\mathbf{X}\setminus Q$, that is, we reveal $\mathbf{X}$ outside of $Q$, and treat $\mathbf{X}\cap Q$ as the source of randomness. We use bold letters to denote those random variables that depend on $\mathbf{X}\cap Q$. Let us introduce some notation, see also Figure \ref{figure:1} for an illustration:
\begin{itemize}
    \item $V=S\setminus \mathbf{X}$,
    \item $G_0$ is the graph on vertex set $V$ which contains those edges of $G$, whose color is in $Q$,
    \item[] $A\in \mathbb{R}^{V\times V}$ is the adjacency matrix of $G_0$,
    \item $\mathbf{G}_0^*$ is the subgraph of $G_0$ in which we keep those edges, whose color is in $ \mathbf{X}\cap Q$,
    \item[] $\mathbf{B}\in \mathbb{R}^{V\times V}$ is the adjacency matrix of $G_0^*$,
    \item $G_1^*$ is the graph on vertex set $V$ which contains those edges of $G$, whose color is in $\mathbf{X}\setminus Q$,
    \item[] $M\in \mathbb{R}^{V\times V}$ is the adjacency matrix of $G_1^*$,
    \item $\mathbf{G}^*$ is the subgraph of $G[\mathbf{X}^c]$, whose edges have color in $\mathbf{X}$. (Here, $\mathbf{X}^c=V(H)\setminus \mathbf{X}$.)
\end{itemize}

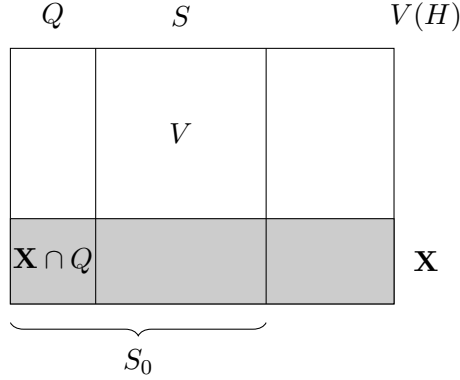
\begin{figure}
\begin{center}
  \begin{tikzpicture}[scale=0.85]
    \draw[] (-3,-2) rectangle (3,2);
    \draw[fill=black!20] (-3,-2) rectangle (3,-0.666) ;
    \draw[] (1,-2) -- (1,2);
    \draw[] (-1.666,-2) -- (-1.666,2);
    \node[] at (-2.333,-1.333) {$\mathbf{X}\cap Q$};
 %   \node[] at (0.666,-1.333) {$\mathbf{X}\setminus Q$};
    \node[] at (-2.333,2.5) {$Q$};
    \node[] at (3.5,-1.333) {$\mathbf{X}$};
    \node[] at (-0.333,2.5) {$S$};
    \node[] at (-0.333,0.666) {$V$};
    \draw [decorate,decoration={brace,amplitude=5pt,mirror,raise=1.5ex}]
  (-3,-2) -- (1,-2) node[midway,yshift=-2em]{$S_0$};
      \node at (3.5,2.5) {$V(H)$};
  \end{tikzpicture}
\end{center}
\caption{An illustration of the different subsets of the vertex set used in the proof of Theorem \ref{lemma:main_cut}}
\label{figure:1}
\end{figure}

Note that $G_0$ has maximum degree at most $2\Delta$, as the degree of every vertex in $G_0$ is at most two times its degree in $H[S_0]$. Also, every color in $G_0$ appears at most $D$ times at every vertex. Indeed, if some color $q\in Q$ appears more than $D$ times at some vertex $v\in V$, then the co-degree of $qv$ in $H[S_0]$ is more than $D$, contradicting our choice of $S_0$. Thus, the matrices $A$ and $\mathbf{B}$ fit the Setup of Section \ref{sect:colored}.

Also, $\mathbf{G}^*[V]$ is the edge-disjoint union of $\mathbf{G}_0^*$ and $G_1^*$, so the adjacency matrix of $\mathbf{G}^*[V]$ is $M+\mathbf{B}$. Hence, $\surp(\mathbf{G}^*)\geq \surp(\mathbf{G}^*[V])=\surp(M+\mathbf{B})$, where the first inequality follows from Lemma \ref{lemma:surplus_sum}. Thus, writing $f=e(G_0)$, we can apply Lemma \ref{lemma:ultimate} to get
$$\mathbb{E}(\surp(\mathbf{G}^*)|\mathbf{X}\setminus Q)\geq \mathbb{E}(\surp(M+\mathbf{B})|\mathbf{X}\setminus Q)=
\Omega\left(\frac{f}{\sqrt{\Delta D}(\log f)^2}\right)$$
As we have $f\leq m$,% where $n=|V(H)|$, we can also write
$$\mathbb{E}(\surp(\mathbf{G}^*)|\mathbf{X}\setminus Q)=\Omega\left(\frac{f}{\sqrt{\Delta D}(\log m)^2}\right).$$
Furthermore,
$$\mathbb{E}(\surp(\mathbf{G}^*))=\mathbb{E}(\mathbb{E}(\surp(\mathbf{G}^*)|\mathbf{X}\setminus Q))\geq \Omega\left(\frac{\mathbb{E}(f)}{\sqrt{\Delta D}(\log m)^2}\right).$$
Note that $f$ is the number of those edges in $G[S\setminus \mathbf{X}]$, whose color is in $Q$. For every edge of $G[S]$, whose color is in $Q$, the probability that it survives is $(1-p)^2p=\Omega(1)$. Hence, $\mathbb{E}(f)=\Omega(m)$. In conclusion, $$\mathbb{E}(\surp(\mathbf{G}^*))=\mathbb{E}(\mathbb{E}(\surp(\mathbf{G}^*)|\mathbf{X}\setminus Q))\geq \Omega\left(\frac{m}{\sqrt{\Delta D}(\log m)^2}\right).$$

Let $\mathbf{Y},\mathbf{Z}$ be a partition of $\mathbf{X}^c$ such that $e_{\mathbf{G}^*}(\mathbf{Y},\mathbf{Z})=\frac{1}{2}e(\mathbf{G}^*)+\surp(\mathbf{G}^*)$. Then $e_H(\mathbf{X},\mathbf{Y},\mathbf{Z})=e_{\mathbf{G}^*}(\mathbf{Y},\mathbf{Z})$, so
$$\mathbb{E}(e_H(\mathbf{X},\mathbf{Y},\mathbf{Z}))=\mathbb{E}(e_{\mathbf{G}^*}(\mathbf{Y},\mathbf{Z}))=\frac{1}{2}\mathbb{E}(e(\mathbf{G}^*))+\mathbb{E}(\surp(\mathbf{G}^*))=\frac{2}{9}e(H)+\Omega\left(\frac{m}{\sqrt{\Delta D}(\log m)^2}\right).$$
Therefore, choosing a partition $(\mathbf{X},\mathbf{Y},\mathbf{Z})$ achieving at least the expectation, we find that $$\surp_3(H)=\Omega\left(\frac{m}{\sqrt{\Delta D}(\log m)^2}\right).$$
\end{proof}

From this, Theorem \ref{thm:main_cut_linear} follows almost immediately. In particular, we prove the following slightly more general result.

\begin{theorem}\label{thm:cut_codegree}
Let $H$ be a 3-graph with $m$ edges and maximum co-degree $D$. Then
$$\surp_3(H)=\Omega\left(\frac{m^{3/4}}{D^{3/4}(\log m)^2}\right).$$
\end{theorem}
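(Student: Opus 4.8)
The plan is to deduce this from Theorem~\ref{lemma:main_cut} by a dyadic decomposition of $H$ according to vertex degrees, applying the general bound on the densest pieces and summing up via Lemma~\ref{lemma:surplus_sum}. The point is that Theorem~\ref{lemma:main_cut} gives $\surp_3(H)\geq \Omega(m/(\sqrt{\Delta D}(\log m)^2))$ whenever $H$ has an induced subhypergraph with $m$ edges, maximum degree $\Delta$, and maximum co-degree $D$; here $D$ is fixed by hypothesis, so the only quantity we need to control is the maximum degree $\Delta$, which we do not want to be too large relative to $m$. If $\Delta \leq m^{1/2}$ we are immediately done, since then $\sqrt{\Delta D}\leq m^{1/4}\sqrt{D}$ and Theorem~\ref{lemma:main_cut} applied to $H$ itself (with the induced subhypergraph being all of $H$) yields $\surp_3(H)\geq \Omega(m/(m^{1/4}\sqrt{D}(\log m)^2)) = \Omega(m^{3/4}/(\sqrt{D}(\log m)^2))$, which is even stronger than required. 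So the work is entirely in the case $\Delta > m^{1/2}$.

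When $\Delta$ is large, the idea is to find an induced subhypergraph that is much denser per vertex, i.e.\ one where the number of edges is large relative to its maximum degree. Concretely, I would argue as follows. Order the vertices $v_1,\dots,v_n$ greedily so that each $v_i$ has minimum degree in $H[\{v_i,\dots,v_n\}]$, and let $d_i$ be that degree. Then $\sum_i d_i = m$ (each edge is counted once, at its earliest vertex), and $H[\{v_i,\dots,v_n\}]$ has maximum degree at most $3 d_i$ or so by a standard averaging-type bound — actually the cleaner route is: for a threshold $t$, repeatedly delete any vertex of degree $> t$; if this process deletes all of $H$, then $m \leq nt$ is not directly useful, so instead one tracks that deleting a vertex of degree $>t$ removes $>t$ edges, hence fewer than $m/t$ vertices get deleted, and the surviving induced subhypergraph $H'$ has maximum degree $\leq t$. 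We need $H'$ to still have many edges: choosing $t$ appropriately, the number of deleted edges is at most (number of deleted vertices)$\cdot\Delta < (m/t)\Delta$, which is less than $m/2$ as soon as $t > 2\Delta$ — but that pushes $t$ back up to $\Delta$. The resolution is to delete edges, not vertices: pick a uniformly random induced subhypergraph, or better, use the bound that in any hypergraph with $m$ edges one can find an induced subhypergraph with $\geq m/2$ edges and maximum degree $O(m/n')$ where $n'$ is its number of vertices — equivalently, iteratively remove the maximum-degree vertex and stop at the moment the ratio (remaining edges)/(max remaining degree) is maximized; at that stopping point the remaining subhypergraph $H'$ with $m'$ edges and max degree $\Delta'$ satisfies $m'/\Delta' \geq \Omega(m^{1/2})$, because removing one more vertex would decrease $m'$ by at most $\Delta'$, and the sequence of such ratios must at some point reach $\Omega(\sqrt m)$ (this is where the $m^{1/2}$ saving comes from, matching the exponent $3/4 = 1/2 + 1/4$). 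Applying Theorem~\ref{lemma:main_cut} to this $H'$ — whose max co-degree is still $\leq D$ — gives $\surp_3(H) \geq \surp_3(H') \geq \Omega(m'/(\sqrt{\Delta' D}(\log m')^2)) = \Omega(\sqrt{m'/\Delta'}\cdot \sqrt{m'}/(\sqrt D (\log m)^2)) = \Omega(m^{1/2}\cdot m'^{1/2}/(\sqrt D (\log m)^2))$; one then checks $m' \geq \Omega(m^{1/2})\cdot \Delta' \geq \Omega(m^{1/2})$ is too weak, so the honest bound needs $m' = \Omega(m)$ or a second application of Lemma~\ref{lemma:surplus_sum} to the pieces peeled off.

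The main obstacle, and the step I would be most careful with, is exactly this degree-reduction / decomposition bookkeeping: getting the exponent $3/4$ out requires that the dense subhypergraph $H'$ we isolate has $m' = \Omega(m)$ edges (not merely $\Omega(m^{1/2})$) while simultaneously having max degree $\Delta' = O(m^{1/2})$, and it is not obvious both can hold at once. The clean way around it is to partition $V(H)$ into dyadic degree classes $U_j = \{v : \deg_H(v) \in [2^j, 2^{j+1})\}$, apply Theorem~\ref{lemma:main_cut} to each $H[U_j]$ noting it has max degree $< 2^{j+1}$, co-degree $\leq D$, and say $m_j$ edges, and sum using Lemma~\ref{lemma:surplus_sum}: $\surp_3(H) \geq \sum_j \Omega(m_j/(\sqrt{2^{j+1}D}(\log m)^2))$. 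Since $\sum_j m_j \geq m/3$ (each edge lies in $H[U_j]$ for the $j$ of its minimum-degree vertex, up to a constant), and $2^j \leq \Delta \leq$ (trivially) $m$, a convexity/Cauchy–Schwarz argument on the worst-case distribution of $m_j$ over $j$ with $2^j$ ranging up to $m$ gives the bound $\Omega(m^{3/4}/(D^{3/4}(\log m)^2))$ — here the extra $D^{1/4}$ in the denominator (versus $D^{1/2}$) comes from the fact that each vertex in $U_j$ has degree $\geq 2^j$ but co-degree $\leq D$, forcing $2^j \leq$ (number of neighbors) roughly $\leq D n_j$ so $n_j \geq 2^j/D$ and hence $m_j \leq \Delta' n_j$ cannot concentrate at too high a level. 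I would write out this last optimization explicitly, as it is the crux: balance $m_j \approx m$ at level $2^j \approx m^{1/2}/D^{1/2}$ against the constraint that higher levels contribute proportionally less.
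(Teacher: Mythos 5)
Your opening observation is correct and matches the spirit of the paper's proof: if the maximum degree $\Delta$ of the underlying graph is at most about $\sqrt{Dm}$, then applying Theorem~\ref{lemma:main_cut} to $H$ itself already gives the bound (and your calculation for $\Delta\leq m^{1/2}$ gives something even stronger). The work is indeed in removing high-degree vertices without losing too many edges. However, the dyadic decomposition you propose for the remaining case has two concrete gaps that prevent it from closing.

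First, you invoke Lemma~\ref{lemma:surplus_sum} to sum the contributions $\surp_3(H[U_j])$ over disjoint dyadic classes, but that lemma is stated (and proved) only for $2$-cuts of multigraphs; there is no analogous statement in the paper for $\surp_3$ of $3$-graphs, and the naive analogue is not obviously true. The usual proof (extend each piece's optimal partition by an independent random permutation of the three labels, assign the remaining vertices uniformly) breaks exactly on edges with two vertices inside one $U_j$ and one vertex outside: such an edge is cut with probability $\tfrac13$ if those two vertices lie in distinct parts of $U_j$'s optimal partition and with probability $0$ otherwise, neither of which is controlled to be at least $\tfrac29$. Second, and independently, an edge of $H$ whose three vertices lie in three different degree classes $U_{j_1},U_{j_2},U_{j_3}$ belongs to no $H[U_j]$ at all, so $\sum_j m_j$ can be far less than $m$ (in fact $0$ for a hypergraph in which every edge has vertices of very disparate degrees); the step ``each edge lies in $H[U_j]$ for the $j$ of its minimum-degree vertex'' is not valid. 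A smaller point: your proposed balance point $2^j\approx m^{1/2}/D^{1/2}$ with $m_j\approx m$ would give $\Omega(m^{3/4}/(D^{1/4}(\log m)^2))$, which does not match the claimed $D^{3/4}$; the correct threshold scale is $\sqrt{Dm}$, not $\sqrt{m/D}$.

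The paper avoids all of this by using a single threshold rather than a decomposition. Working in the underlying multigraph $G$ with $e(G)=3m$, set $\Delta_0=100\sqrt{Dm}$ and let $T$ be the vertices of $G$-degree more than $\Delta_0$, with $S=V(G)\setminus T$. Then $|T|\leq 6m/\Delta_0=O(\sqrt{m/D})$, and because every $G$-edge has multiplicity at most $D$ (the co-degree bound), $e(G[T])\leq\tfrac{D}{2}|T|^2\leq m/200$. If $e_G(S,T)\geq 1.6m$ then the bipartition $(S,T)$ already gives $\surp(G)\geq 0.1m$, hence $\surp_3(H)=\Omega(m)$ via Lemma~\ref{lemma:underlying}; otherwise $e(G[S])\geq 1.3m$, which translates to $e(H[S])\geq 0.1m$. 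Now $H[S]$ has $\Omega(m)$ edges, maximum degree $O(\sqrt{Dm})$, and maximum co-degree at most $D$, so one application of Theorem~\ref{lemma:main_cut} yields $\surp_3(H)=\Omega\bigl(m/(\sqrt{\Delta_0 D}(\log m)^2)\bigr)=\Omega\bigl(m^{3/4}/(D^{3/4}(\log m)^2)\bigr)$. The essential ingredient you were missing is that the co-degree bound directly controls $e(G[T])$ via multiplicities, so there is no need to peel or decompose: deleting the high-degree vertices in one shot keeps a constant fraction of the edges.
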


\begin{proof}
Let $G$ be the underlying graph of $H$, then $e(G)=3m$. Let $\Delta=100D^{1/2}m^{1/2}$, let $T$ be the set of vertices of $G$ of degree more than $\Delta$, and let $S=V(G)\setminus T$. As 
$6m=2e(G)\geq |T|\Delta$, we get that $|T|\leq 0.1m^{1/2}D^{-1/2}$. Hence, as the multiplicity of every edge of $G$ is at most $D$, we can write $e(G[T])\leq \frac{D}{2}|T|^2\leq \frac{1}{200}m$.

If $e_G(S,T)\geq 1.6m$, then $\surp(G)\geq 0.1 m$, so we are done as Lemma \ref{lemma:underlying} implies $\surp_3(H)\geq \frac{1}{6}\surp(G)$. 

Hence, we may assume that $e_G(S,T)<1.6m$. But then we must have $e(G[S])\geq 1.3m$, as $e(G[T])+e_G(S,T)+e(G[S])=e(G)=3m$. Moreover, 
$$e(G[S])\leq 3e(H[S])+e_H(S,T)\leq 3e(H[S])+\frac{1}{2}e_G(S,T),$$
from which we conclude that $e(H[S])\geq 0.1 m$. The maximum degree of $H[S]$ is less than the maximum degree of $G[S]$, which is less than $\Delta=100m^{1/2}D^{1/2}$. Also, the maximum co-degree of $H[S]$ is $D$, so applying Theorem \ref{lemma:main_cut} to the subhypergraph induced by $S$ finishes the proof.
\end{proof}

Next, we prove Theorem \ref{thm:main_cut}.

\begin{proof}[Proof of Theorem \ref{thm:main_cut}]
Let $G$ be the underlying graph of $H$, and consider $G$ as an edge weighted graph, where the weight of each edge is its multiplicity. We may assume that $\surp(G)\leq \frac{m^{3/5}}{(\log m)^2}$, otherwise we are done by Lemma \ref{lemma:underlying}. Let $D=m^{1/5}$. Say that an edge $e$ of $G$ is \emph{$D$-heavy} if its weight is at least $D$.

Let $S$ be the vertex set of a maximal matching of $D$-heavy edges.

\begin{claim}
    $|S|< m^{2/5}$
\end{claim}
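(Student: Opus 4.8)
The plan is to bound the size of the maximal matching $\mathcal{M}$ of $D$-heavy edges by counting edge-weight. By definition of $D$-heaviness, every edge in $\mathcal{M}$ carries weight at least $D = m^{1/5}$ in the weighted graph $G$. Since $\mathcal{M}$ is a matching, its edges are pairwise vertex-disjoint, hence the weights of distinct edges of $\mathcal{M}$ count disjoint contributions to the total weight $e(G) = 3m$. Therefore $|\mathcal{M}| \cdot D \le \sum_{e \in \mathcal{M}} w(e) \le e(G) = 3m$, which gives $|\mathcal{M}| \le 3m/D = 3m^{4/5}$. Since $|S| = 2|\mathcal{M}|$, this yields $|S| \le 6m^{4/5}$.

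This is weaker than the claimed $|S| < m^{2/5}$, so the straightforward edge-weight count alone is not enough; I would need to also use the co-degree bound. The point is that the weight of an edge $\{u,v\}$ in $G$ equals the co-degree $d_H(u,v)$ in the 3-graph $H$, and a $D$-heavy edge corresponds to a pair $\{u,v\}$ contained in at least $D$ edges of $H$. The edges of $H$ incident to a fixed matching edge $\{u,v\} \in \mathcal{M}$ form a set of $\ge D$ hyperedges, and because $\mathcal{M}$ is a matching, the pairs $\{u,v\}$ are disjoint, so no hyperedge of $H$ can contain two distinct matching pairs (such a hyperedge has only three vertices, hence at most one disjoint pair among its $\binom{3}{2}=3$ pairs — wait, a triple $\{a,b,c\}$ contains three pairs, no two of which are disjoint). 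Thus the hyperedge sets associated to distinct matching edges are pairwise disjoint, giving $|\mathcal{M}| \cdot D \le e(H) = m$, hence $|\mathcal{M}| \le m/D = m^{4/5}$ and $|S| = 2|\mathcal{M}| \le 2m^{4/5}$. Still not $m^{2/5}$.

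At this point I suspect the intended argument uses the additional hypothesis $\surp(G) \le m^{3/5}/(\log m)^2$, which has not yet been exploited: a large matching of heavy edges would itself force a large surplus. Concretely, contracting or restricting to $S$, the subgraph $G[S]$ contains a perfect matching of edges each of weight $\ge D = m^{1/5}$, so $e(G[S]) \ge |S| D / 2$; more to the point, a matching of $t$ heavy edges has surplus $\Omega(tD)$ by Lemma~\ref{lemma:surplus_sum} (each $K_2$ with multiplicity $w$ has surplus exactly $w/2$), so $\surp(G) \ge |S| D / 4$. Combined with the assumed upper bound $\surp(G) \le m^{3/5}/(\log m)^2$ this gives $|S| \le 4 m^{3/5}/(D (\log m)^2) = 4 m^{2/5}/(\log m)^2 < m^{2/5}$ for $m$ large, which is exactly the claim.

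So the proof I would write is: let $\mathcal{M}$ be the maximal matching defining $S$, with $|\mathcal{M}| = |S|/2$. By Lemma~\ref{lemma:surplus_sum} applied to the vertex pairs of $\mathcal{M}$, $\surp(G) \ge \sum_{\{u,v\}\in\mathcal{M}} \surp(G[\{u,v\}])$, and each $G[\{u,v\}]$ is a multi-edge of weight $\ge D$, whose surplus is (weight)$/2 \ge D/2$. Hence $\surp(G) \ge |\mathcal{M}| D / 2 = |S| D / 4$. Using the standing assumption $\surp(G) \le m^{3/5}/(\log m)^2$ and $D = m^{1/5}$ gives $|S| \le 4 m^{3/5}/(m^{1/5}(\log m)^2) = 4m^{2/5}/(\log m)^2 < m^{2/5}$ for all sufficiently large $m$. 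The only mild subtlety — and the step I would double-check — is that $\surp(G[\{u,v\}])$ really equals half the multiplicity (an edge of multiplicity $w$ between two vertices has max cut $w$ and $w/2$ expected, surplus $w/2$), and that Lemma~\ref{lemma:surplus_sum} is being used with the weighted/multigraph convention consistent with the rest of the paper; both are immediate.
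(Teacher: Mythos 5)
Your final argument is exactly the paper's proof: apply Lemma~\ref{lemma:surplus_sum} to the disjoint vertex pairs of the maximal matching of $D$-heavy edges, use that each such pair contributes surplus at least $D/2$, and combine the resulting bound $\surp(G)\geq |S|D/4$ with the standing assumption $\surp(G)\leq m^{3/5}/(\log m)^2$. The two preliminary counting attempts you discard are indeed too weak, but the approach you settle on (and the observation that an edge of multiplicity $w$ has surplus $w/2$) is precisely what the paper does.
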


\begin{proof}
Assume that $|S|\geq m^{2/5}$, and let $e_1,\dots,e_s$ be a matching of $D$-heavy edges, where $s=m^{2/5}/2$. By Lemma \ref{lemma:surplus_sum}, we have
$$\surp(G)\geq \sum_{i=1}^{s}\surp(G[e_i])\geq s\cdot \frac{D}{2}=\frac{m^{3/5}}{4},$$
contradiction.
\end{proof}

Let $\Delta=m^{3/5}$, and let $T$ be the set of vertices of $G$ of degree more than $\Delta$. Then $3m=e(G)\geq \frac{1}{2}|T|\Delta$, so $|T|\leq 6m^{2/5}$. Let $U=S\cup T$, then $|U|\leq 7m^{2/5}$.

\begin{claim}
   $e(G[U])\leq 0.1m$ 
\end{claim}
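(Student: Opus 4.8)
The plan is to bound $e(G[U])$ by splitting edges of $G[U]$ according to their weight, treating $D$-heavy edges and light edges separately, and using the constructions of $S$ and $T$.

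First I would handle the $D$-heavy edges inside $U$. Since $S$ is the vertex set of a \emph{maximal} matching of $D$-heavy edges, every $D$-heavy edge of $G$ has an endpoint in $S$; in particular every $D$-heavy edge inside $G[U]$ has an endpoint in $S$. The total weight of all $D$-heavy edges incident to a fixed vertex $v\in S$ is at most $\deg_G(v)$, and I need an upper bound on this. Here I would use that $\surp(G)$ is small: if some vertex $v$ had many $D$-heavy edges (total weight exceeding, say, $m^{3/5}$), one could already extract a large surplus from the star at $v$ via Lemma \ref{lemma:surplus_sum} together with the observation that a single vertex with edges of large total weight to distinct neighbours contributes surplus $\Omega$ of that weight (split off a star and two-colour it greedily). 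Combined with $|S|< m^{2/5}$, this gives that the total weight of $D$-heavy edges meeting $S$ is at most $m^{2/5}\cdot O(m^{3/5}) = O(m)$, and one has to check the constant works out to at most, say, $0.05m$; if not, one argues directly that $\surp(G)$ exceeds $m^{3/5}/(\log m)^2$ and is done by Lemma \ref{lemma:underlying}.

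Next I would bound the contribution of light edges (weight $<D$) inside $G[U]$. There are at most $\binom{|U|}{2}\le |U|^2/2$ vertex pairs in $U$, each carrying weight less than $D=m^{1/5}$, so the total weight of light edges in $G[U]$ is at most $\frac{1}{2}|U|^2 D \le \frac{1}{2}(7m^{2/5})^2 m^{1/5} = \frac{49}{2} m^{4/5+1/5}\cdot$, wait — that is $\frac{49}{2}m$, which is too large. So a cruder pairs bound does not suffice, and instead I would bound light edges inside $U$ by their incidence with $T$ versus $S\setminus T$: edges inside $S\setminus T$ with both endpoints of degree $\le \Delta$, there are at most $|S|\Delta/2 \le m^{2/5}\cdot m^{3/5}/2 = m/2$ — still too large. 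This is the main obstacle: the naive degree and pair bounds are all of order $m$, so one genuinely needs the smallness of $\surp(G)$ to control the light edges too, presumably by observing that a dense subgraph on few vertices (like $G[U]$ with $\ge 0.1m$ edges on $\le 7m^{2/5}$ vertices) has average degree $\gtrsim m^{3/5}$, hence contains a subgraph of large minimum degree, which by Edwards-type bounds (or directly) has surplus $\Omega(\text{edges})$; then $\surp(G)\ge \surp(G[U]) = \Omega(m)$, contradicting $\surp(G)\le m^{3/5}/(\log m)^2$.

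So the real structure of the proof is: assume for contradiction $e(G[U]) > 0.1m$; since $|U|\le 7m^{2/5}$, the graph $G[U]$ (as a weighted graph with weights at most $D$) has many edges on few vertices; extract from it a configuration — a large matching of heavy-ish edges, or a subgraph of large minimum degree, or a vertex of large weighted degree — that forces $\surp(G[U])$ to be $\omega\!\big(m^{3/5}/(\log m)^2\big)$, e.g. $\Omega(m^{3/5})$ or $\Omega(m^{4/5}/\polylog)$; conclude via Lemma \ref{lemma:surplus_sum} and Lemma \ref{lemma:underlying} that $\surp_3(H)$ already exceeds the target, so we may assume the claimed bound. I expect the cleanest route is the matching one mirroring the first claim: if $e(G[U])>0.1m$ then, since each edge has weight $<\Delta$ is not the constraint but the \emph{count} of distinct pairs is at most $\binom{7m^{2/5}}{2} = O(m^{4/5})$, greedily pulling out a matching of $\Omega(m^{2/5})$ pairs each of weight $\Omega(m/m^{4/5}) = \Omega(m^{1/5}) = \Omega(D)$ total over the matching $\Omega(m^{2/5}\cdot m^{1/5})=\Omega(m^{3/5})$ surplus — the bookkeeping of constants against $0.1m$ and against the assumed upper bound on $\surp(G)$ is the one step that needs care, but it is routine once the matching-extraction is set up.
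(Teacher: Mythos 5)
Your final route is in spirit the paper's: assume for contradiction that $e(G[U])>0.1m$, use $|U|\le 7m^{2/5}$ to conclude $G[U]$ is dense, derive $\surp(G[U])=\Omega(m^{3/5})$, and combine with $\surp(G)\ge\surp(G[U])$ (Lemma \ref{lemma:surplus_sum}) and $\surp_3(H)\ge\tfrac16\surp(G)$ (Lemma \ref{lemma:underlying}) to contradict the assumed bound on $\surp(G)$. The paper does this in one line by invoking Lemma \ref{lemma:m/n} (any multigraph on $n'$ vertices with $m'$ edges has $\surp=\Omega(m'/n')$), so $\surp(G[U])=\Omega\bigl(e(G[U])/|U|\bigr)=\Omega(m^{3/5})$, done.

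However, your proposed final step — greedily extract a matching of $\Omega(m^{2/5})$ pairs in $G[U]$ each carrying weight $\Omega(m^{1/5})$ — does not follow from the averaging. The weight of $G[U]$ can be concentrated on a small number of pairs (in fact it can live on a star centered at a single vertex of $T$, which is precisely a set of high-degree vertices), so a matching with total weight $\Omega(m^{3/5})$ need not exist. Fixing this by case analysis (a heavy star gives surplus from that star, etc.) would amount to re-proving Lemma \ref{lemma:m/n}. You should simply invoke Lemma \ref{lemma:m/n} on $G[U]$; this also renders the first two paragraphs of your attempt (the direct heavy/light decomposition) unnecessary — you correctly observed those bounds are all $\Theta(m)$ and do not close.
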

\begin{proof}
If $e(G[U])>0.1m$, then by Lemmas \ref{lemma:surplus_sum} and \ref{lemma:m/n}, we get
$$\surp(G)\geq \surp(G[U])= \Omega\left(\frac{e(G[U])}{|U|}\right)=\Omega(m^{3/5}),$$
contradiction.
\end{proof}

Let $W=U^c$. Then $G[W]$ contains no $D$-heavy edges, and the maximum degree of $G[W]$ is at most $\Delta$. Also, $e_G(U,W)\leq 1.6m$, as otherwise $\surp(G)\geq 0.1m$. This implies that $e_H(U,W)\leq 0.8m$ and that $e(G[W])=e(G)-e(G[U])-e_G(U,W)\geq 1.3m$. But then 
$$e(H[W])\geq  \frac{1}{3}(e(G[W])-e_H(U,W))>0.1m.$$ 
Thus, $H[W]$ is an induced subhypergraph of $H$ with at least $0.1m$ edges, maximum degree at most $\Delta$, and maximum co-degree at most $D$. Applying Lemma \ref{lemma:main_cut}, we conclude that
$$\surp_3(H)\geq  \Omega\left(\frac{m}{\sqrt{\Delta D}(\log m)^2}\right)=\Omega\left(\frac{m^{3/5}}{(\log m)^2}\right).$$
\end{proof}

Finally, we prove Theorem \ref{thm:rk}.

\begin{proof}[Proof of Theorem \ref{thm:rk}]
Let $H_r=H$, and for $i=r-1,\dots,3$, let $H_i$ be the underlying $i$-graph of $H_{i+1}$. Then by Lemma \ref{lemma:underlying}, $\surp_{i+1}(H_{i+1})\geq \frac{1}{i+1}\surp_{i}(H_i)$. From this, $\surp_r(H)\geq \Omega_r(\surp_3(H_{3}))$. But $H_3$ has $r(r-1)\dots 4\cdot m$ edges, so $\surp_3(H_3)=\Omega_r\left(\frac{m^{3/5}}{(\log m)^2}\right)$ by Theorem \ref{thm:main_cut}. Thus, $\surp_r(H)=\Omega_r\left(\frac{m^{3/5}}{(\log m)^2}\right)$ as  well.

In case $H$ is linear, the maximum co-degree of $H_3$ is $O_r(1)$, so applying Theorem \ref{thm:cut_codegree} gives the desired result.
\end{proof}

\section{Concluding remarks}

In this paper, we proved that every $r$-graph with $m$ edges contains an $r$-partite subhypergraph with at least $\frac{r!}{r^r}m+m^{3/5-o(1)}$ edges. This is still somewhat smaller than the conjectured  bound  $\frac{r!}{r^r}m+\Omega_r(m^{2/3})$ of Conlon, Fox, Kwan, and Sudakov \cite{CFKS}. In order to prove this conjecture, it is enough to prove the following strengthening of Theorem \ref{thm:main}.

\begin{conjecture}
If $H$ is a $3$-multigraph that contains an induced subhypergraph with $m$ edges and maximum degree $\Delta$, then $\surp_3(H)=\Omega(\frac{m}{\sqrt{\Delta}})$.
\end{conjecture}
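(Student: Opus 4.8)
The strategy is to strengthen Theorem~\ref{thm:main} so that it no longer involves the co-degree: it suffices to show that every $3$-multigraph $H$ containing an induced subhypergraph with $m$ edges and maximum degree $\Delta$ satisfies $\surp_3(H)=\Omega\big(m/(\sqrt{\Delta}\,(\log m)^{O(1)})\big)$. Granting this, one reruns the argument of Theorem~\ref{thm:cut_codegree}: delete the vertex set of a maximal matching of edges of the underlying multigraph of multiplicity above a threshold $D_0$ (if it has $s_0$ edges, Lemma~\ref{lemma:surplus_sum} already gives $\surp_3(H)=\Omega(s_0D_0)$, and otherwise only $O(s_0)$ vertices are removed), and delete all vertices of degree above a threshold $\Delta_0$ (if the deleted part keeps a constant fraction of the edges, Lemmas~\ref{lemma:m/n} and~\ref{lemma:surplus_sum} give $\surp_3(H)=\Omega(m/\mathrm{polylog})$ there). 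On what remains the strengthened theorem applies with maximum degree $\Delta_0$; balancing $s_0D_0\asymp m/\sqrt{\Delta_0}$ against the constraints needed for the two deleted parts to be negligible forces $\Delta_0\asymp m^{2/3}$, $D_0\asymp m^{1/3}$, $s_0\asymp m^{1/3}$, and yields $\surp_3(H)=\Omega\big(m^{2/3}/(\log m)^{O(1)}\big)$. This is the conjectured order up to the polylogarithm; removing it needs, in addition, a logarithm-free version of Corollary~\ref{cor:surplus_energy} for the matrices that actually occur.

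The whole task thus reduces to Section~\ref{sect:colored}, where the co-degree $D$ enters only through two estimates in the proof of Lemma~\ref{lemma:colored_sampling}: $\|\mathbf{M}_i\|=\|\mathbf{I}_iA_i\|\le D$ and $\|W\|=p\|\sum_iA_i^2\|\le p\Delta D$. The first is wasteful and I would replace it by $\|A_i\|\le\sqrt{2\,e(L_i)}\le\sqrt{2\Delta}$, where $L_i$ is the $i$-th color class, a single link graph with $e(L_i)=\deg_H(i)\le\Delta$ edges. The second is essentially tight in general --- already for the complete $3$-graph $\sum_wA_w^2$ is, up to lower-order terms, a constant multiple of the all-ones matrix, so $\|\sum_wA_w^2\|=\Theta(\Delta D)$ --- and cannot simply be improved. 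The key point is that exactly such $H$ never reach the energy method: whenever $\|\sum_wA_w^2\|$ is large, the edges of $H$ are concentrated on a dense piece or on many disjoint heavy pairs, which is caught earlier by the density bound (Lemma~\ref{lemma:m/n}) or the heavy-matching bound (Lemma~\ref{lemma:surplus_sum}).

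I would therefore prove a dichotomy for $\mathcal{W}(H):=\|\sum_wA_w^2\|$. If $\mathcal{W}(H)\le\Delta\cdot(\log m)^{O(1)}$, re-running Lemmas~\ref{lemma:large_energy} and~\ref{lemma:ultimate} with $\|W\|\le p\mathcal{W}(H)$ and $\|\mathbf{M}_i\|\le\sqrt{2\Delta}$ in place of the old bounds gives $\mathbb{E}(\surp(M+\mathbf{B}))=\Omega\big(pm/(\sqrt{\mathcal{W}(H)}\,(\log m)^{O(1)})\big)$, hence the required surplus. If instead $\mathcal{W}(H)$ is large, one has to extract combinatorial structure from a unit vector $x$ achieving $\sum_w\|A_wx\|^2=\mathcal{W}(H)$. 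Since $\|A_wx\|^2=\sum_a\big(\sum_{b:\{a,b,w\}\in E(H)}x(b)\big)^2$, a large value of this form means that along the direction $x$ many links accumulate coherently, and I would try to turn this into either a small vertex set carrying $\Omega(m/\mathrm{polylog})$ edges (for Lemma~\ref{lemma:m/n}) or a large family of pairwise disjoint pairs of co-degree $\Omega(\mathrm{polylog})$ (for Lemma~\ref{lemma:surplus_sum}). Making this extraction precise --- in effect, a weak-regularity statement for the positive semidefinite matrix $\sum_wA_w^2$, telling one which coordinates of $x$ to keep and how to reconstruct a subhypergraph from them --- is the step I expect to be the main obstacle.

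Finally, to replace $(\log m)^{O(1)}$ by $O(1)$ and obtain the exact bound $\surp_3(H)=\Omega(m/\sqrt{\Delta})$ --- hence $\Omega_r(m^{2/3})$ in Theorems~\ref{thm:main_cut} and~\ref{thm:rk} --- I would try to remove the logarithm from the passage $\surp^*(A)\mapsto\surp(A)$ for the particular matrices $M+\mathbf{B}$ produced by the sampling. The Grothendieck logarithm in Lemma~\ref{lemma:groth} is necessary only for matrices of large Grothendieck norm, whereas the dual certificate used to bound $\surp^*(A)$ from below is the spectral projection $X=\sum_{\lambda_i<0}v_iv_i^T$, and projections of moderate rank admit Krivine-type rounding with a dimension-free constant. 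Since $M+\mathbf{B}$ is a sparse random perturbation of $pA$ it has only $O(\mathrm{polylog})$ large eigenvalues with high probability, so one could hope to show that most of the negative mass is carried by a low-dimensional eigenspace and round only there. Controlling the negative mass outside that subspace is the secondary obstacle.
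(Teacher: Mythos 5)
This statement is a \emph{conjecture} in the paper's concluding remarks, not a theorem: the authors pose it as a co-degree-free strengthening of Theorem~\ref{thm:main} that would imply the Conlon--Fox--Kwan--Sudakov bound $\surp_k(H)=\Omega_r(m^{2/3})$, and they offer no proof. Your write-up is likewise not a proof, and you say so yourself: you flag two steps as ``obstacles,'' and both are genuinely open. So there is no proof in the paper to reconcile your attempt against, and your attempt does not close the conjecture either.

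On the substance, your diagnosis of where $D$ enters is sound: the per-summand bound $\|\mathbf{M}_i\|\le D$ can indeed be replaced by $\|A_i\|\le\|A_i\|_F\le\sqrt{2\Delta}$ (for simple links; be careful with multi-links), and the real bottleneck is the variance proxy $\|W\|=p\,\|\sum_w A_w^2\|$, for which the complete $3$-graph shows $\Theta(\Delta D)$ is unavoidable in general. The proposed dichotomy on $\mathcal{W}(H)=\|\sum_w A_w^2\|$ is a reasonable framing, and the first horn (small $\mathcal{W}$, rerun Lemmas~\ref{lemma:large_energy} and~\ref{lemma:ultimate}) is routine. But the second horn — extracting from a unit vector $x$ with $\sum_w\|A_w x\|^2=\mathcal{W}(H)$ either a small dense vertex set (for Lemma~\ref{lemma:m/n}) or many disjoint high-co-degree pairs (for Lemma~\ref{lemma:surplus_sum}) — is precisely the missing content of the conjecture, and you supply no mechanism for it; it is the same obstruction that leaves the $\sqrt{D}$ in Theorem~\ref{thm:main}. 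The final paragraph is the most speculative: the $\Theta(\log n)$ gap in Lemma~\ref{lemma:groth} is tight in the worst case, the matrices $M+\mathbf{B}$ are not positive semidefinite (so the Nesterov/Rietz constant-gap result does not apply), and ``low-rank spectral projections admit dimension-free rounding'' is not a theorem you can invoke — the worst-case examples for the symmetric Grothendieck gap are driven by sign structure, not by the rank of the dual certificate. One minor correction to your first paragraph: if a co-degree-free variant of Theorem~\ref{thm:main} with polylog loss were available, the heavy-pair matching step would be superfluous — its sole purpose in the proof of Theorem~\ref{thm:main_cut} is to control $D$ on the residual hypergraph, and deleting vertices of degree above $m^{2/3}$ together with Lemma~\ref{lemma:m/n} already completes the reduction.
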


 Indeed,  our Theorem \ref{thm:main} is sharp (up to logarithmic terms) as long as the maximum co-degree is $O(1)$. However, we believe the dependence on the co-degree in our lower bound is not necessary. On another note, we are unable to extend our methods to attack the problem of $k$-cuts in $r$-graphs in case $k\leq r-2$. The following remains and intriguing open problem.

\begin{conjecture}[\cite{CFKS}]
Let $2\leq k\leq r$ be integers such that $r\geq 3$ and $(r,k)\neq (3,2)$. If $H$ is an $r$-graph with $m$ edges, then $\surp_k(H)=\Omega_r(m^{2/3})$.
\end{conjecture}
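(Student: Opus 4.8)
Since the statement is an open conjecture, I describe the route I would take rather than a complete argument. The first move is a reduction to base cases. For $k\in\{r-1,r\}$ the iterated underlying-hypergraph inequality of Lemma~\ref{lemma:underlying} already shows $\surp_k(H)=\Omega_r(\surp_3(H_3))$, where $H_3$ is the underlying $3$-graph of $H$ and has $\Theta_r(m)$ edges; hence for these $k$ the conjecture is equivalent to the bound $\surp_3(H)=\Omega(m^{2/3})$ for every $3$-graph $H$ with $m$ edges, which in turn follows from the co-degree-free strengthening of Theorem~\ref{thm:main} conjectured above. So the first block of work is precisely: remove the co-degree factor from Theorem~\ref{thm:main}, i.e.\ prove $\surp_3(H)=\Omega(m/\sqrt{\Delta})$ whenever $H$ has an induced subhypergraph with $m$ edges and maximum degree $\Delta$.

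To remove the $\sqrt D$, I would revisit Lemmas~\ref{lemma:colored_sampling} and~\ref{lemma:large_energy}: the loss enters only through the bound $\|W\|\le p\Delta D$ on the second-moment matrix of the color-sampled adjacency matrix. The plan is to first peel off the few dense co-degree configurations — as in the matching and heavy-edge arguments inside the proofs of Theorems~\ref{thm:main_cut} and~\ref{thm:cut_codegree} — and bound their contribution to the surplus directly through Lemmas~\ref{lemma:surplus_sum} and~\ref{lemma:m/n}; and then, on the residual hypergraph, replace the uniform sampling probability $p$ by a color-dependent probability (equivalently, weight the colored multigraph so each color class is genuinely low-degree) before re-running the martingale concentration of Lemma~\ref{lemma:matrix_concentration}. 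If on a typical color class one obtains $\|W\|\le p\Delta\cdot\mathrm{polylog}$ instead of $p\Delta D$, the energy bound of Lemma~\ref{lemma:large_energy} becomes $\Omega(pm/\sqrt\Delta)$ up to logs, and feeding the new exponent through the balancing computation of Theorem~\ref{thm:cut_codegree} outputs $m^{2/3-o(1)}$.

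For $k\le r-2$ there is a second, genuinely new, block, since no reduction analogous to Lemma~\ref{lemma:underlying} that lowers the uniformity while keeping $k$ fixed is currently available. Here I would sample $k-2$ parts first, exactly as in the proof of Theorem~\ref{thm:main}: the edges that can still be cut by the final $k$-partition are those meeting the sampled parts and leaving at least two vertices outside, and deleting the sampled vertices turns them into a weighted, mixed-uniformity hypergraph on the remaining vertices whose top layer is an $(r-k+2)$-graph, for which we need a $2$-cut beating the random bound by $\Omega(m^{2/3})$. The right generalisation of energy is the nuclear norm of the best ``slice'' of the associated symmetric tensor: for each $2\le j\le r-k+2$ contract the other coordinates against a uniform $\pm1$ assignment to get a random $j$-uniform underlying hypergraph, and run the Section~\ref{sect:colored} machinery on whichever layer carries the most Frobenius mass. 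Counting the random cut shows the total variance across layers is $\Omega(m)$, so some layer is heavy; but the layers are correlated, so simultaneously controlling the spectral radius and the Frobenius norm of the contracted matrix (the analogue of the event $\{\lambda\le t\}$ in Lemma~\ref{lemma:large_energy}) is the technical heart.

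The step I expect to be the real obstacle is the quantitative jump from $m^{3/5}$ to $m^{2/3}$. The energy method as set up is lossy by a polynomial factor because it uses only the second moment of $pA-\mathbf B$ (its Frobenius norm) together with an almost-sure bound on its spectral radius. Reaching $m^{2/3}$ seems to need either a fourth-moment version of Lemma~\ref{lemma:large_energy} — controlling $\mathbb{E}\|pA-\mathbf B\|_F^4$ against $(\mathbb{E}\|pA-\mathbf B\|_F^2)^2$ so as to drop the pointwise bound on the top eigenvalue, which is false in general but plausibly holds after the co-degree peeling above — or an entirely combinatorial mechanism for extracting surplus from the dense co-degree regime, where the linear-algebraic picture is weakest. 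Absent such an input, the methods here appear to halt at the $m^{3/5}$ barrier (respectively $m^{3/4}$ under bounded co-degree).
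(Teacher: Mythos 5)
This statement is not proved in the paper: it is reproduced verbatim in the Concluding remarks as an open conjecture of Conlon, Fox, Kwan, and Sudakov, explicitly labelled as such, and the paper's results ($m^{3/5-o(1)}$ in general, $m^{3/4-o(1)}$ for linear hypergraphs, and only for $k\in\{r-1,r\}$) fall short of it. You correctly recognize this and frame your submission as a research plan rather than a proof. Your first reduction — that for $k\in\{r-1,r\}$ the conjecture would follow, via iterated application of Lemma~\ref{lemma:underlying}, from the $D$-free strengthening $\surp_3(H)=\Omega(m/\sqrt{\Delta})$ — is exactly the observation the authors make immediately before stating their own Conjecture~6.1; the balancing $\Delta\approx m^{2/3}$ you describe then recovers $m^{2/3}$, as required. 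For $k\le r-2$ you also correctly note that no analogue of Lemma~\ref{lemma:underlying} is available; the paper says the same (``we are unable to extend our methods to attack the problem of $k$-cuts in $r$-graphs in case $k\le r-2$''). So your diagnosis of what is missing matches the authors' own.

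One small inaccuracy worth flagging: you write that the $\sqrt{D}$ loss ``enters only through the bound $\|W\|\le p\Delta D$.'' In fact $D$ enters the tail bound of Lemma~\ref{lemma:matrix_concentration} in two places — the variance term $8\|W\|\le 8p\Delta D$ \emph{and} the $4Dt$ Bernstein term coming from the almost-sure bound $\|\mathbf{M}_i\|\le D$ on each color class. In the paper's choice $t\approx\log m\cdot\sqrt{\Delta D}$ both terms are comparable (the computation in Lemma~\ref{lemma:large_energy} uses $D\sqrt{\Delta D}\le\Delta D$). If you reweight colors so that the effective variance drops to $\approx p\Delta$, the $Dt$ term becomes the bottleneck unless you simultaneously cap $\|\mathbf{M}_i\|$, which is exactly the dense co-degree regime you propose to peel off first; so the two halves of your plan are not independent, and the peeling has to be calibrated so that \emph{both} terms shrink. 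Beyond this, your tensor/nuclear-norm sketch for $k\le r-2$ is speculative and goes outside the paper's toolkit, and — as you acknowledge — the quantitative gap between $m^{3/5}$ and $m^{2/3}$ is precisely why this remains open.
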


In the case of linear hypergraphs, we proved that $\surp_r(H)>m^{3/4-o(1)}$, which is optimal up to the $o(1)$ term. It would be interesting to extend this result for $k$-cuts as well in case $k\leq r-2$, or to improve the $o(1)$ term.

\begin{conjecture}
Let $2\leq k\leq r$ be integers such that $r\geq 3$ and $(r,k)\neq (3,2)$. If $H$ is a linear $r$-graph with $m$ edges, then $\surp_k(H)=\Omega_r(m^{3/4})$.
\end{conjecture}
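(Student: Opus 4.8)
The case $k\in\{r-1,r\}$ is Theorem~\ref{thm:rk}, so I focus on $k\le r-2$, where no analogue of Lemma~\ref{lemma:underlying} is available. The plan is to reduce the $k$-cut problem for a linear $r$-graph to a $2$-cut problem for a multigraph and then invoke the energy machinery of Sections~\ref{sect:energy}--\ref{sect:proofs}. First pass to an induced subhypergraph with $\Omega(m)$ edges and maximum degree $O(m^{1/2})$ by truncating high-degree vertices exactly as in the proof of Theorem~\ref{thm:cut_codegree} (all co-degrees are $O_r(1)$ by linearity). Then assign each vertex independently to one of $V_3,\dots,V_k$ with probability $1/k$ each and to a free pool $W$ with the remaining probability $2/k$; since the two remaining parts $V_1,V_2$ will live inside $W$, and for this choice of probabilities a uniform $2$-colouring of $W$ recovers the uniform random $k$-cut, the conditional expectation over a random $2$-colouring of $W$ of the number of cut edges is exactly $\frac{S(r,k)k!}{k^r}m$. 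Call an edge $e$ of $H$ \emph{good} if it meets each of $V_3,\dots,V_k$ and has exactly two vertices in $W$; the good edges form a multigraph $G^*$ on $W$ with $\mathbb{E}(e(G^*))=\Omega_{r,k}(m)$ (here $r-2\ge k-2$ is used). Organising the randomness into two stages and adapting the colour-sampling estimates of Section~\ref{sect:colored} as in the proof of Theorem~\ref{lemma:main_cut} --- the indicator that ``a vertex receives a prescribed one of several labels'' replacing the indicator that ``a colour is sampled'' --- one should obtain $\mathbb{E}(\surp(G^*))=\Omega_{r,k}\!\bigl(m^{3/4}/\polylog m\bigr)$.

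The difficulty is that fixing the $2$-colouring $c$ of $W$ to be good for $G^*$ simultaneously affects the \emph{non-good} live edges --- those meeting all of $V_3,\dots,V_k$ with three or more vertices in $W$ --- and these carry a positive $\Omega_{r,k}(m)$ share of the cut, far more than the $O(m^{3/4})$ one can hope to gain from $G^*$. Quantitatively, writing $\mathcal L$ for the non-uniform hypergraph on $W$ whose edges are the traces $e\cap W$ of live edges, one must prove $\mathbb{E}(\surp(\mathcal L))=\Omega_{r,k}\!\bigl(m^{3/4}/\polylog m\bigr)$ where $\surp(\mathcal L)=\max_c e_{\mathcal L}(c)-\mathbb{E}_c\,e_{\mathcal L}(c)$. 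Expanding $\mathbf{1}[c|_f\text{ non-constant}]$ in the $\pm1$ variables, the constant term is the random-cut value, the quadratic part is $-\sum_f 2^{1-|f|}\sum_{\{u,v\}\subseteq f}x_ux_v$ (its restriction to $G^*$-pairs being exactly the matrix the energy argument controls), and the only obstruction is the even terms of degree $\ge4$. The naive remedies fail in the same way: any $2$-colouring biased towards the $G^*$-optimum by an amount $\gamma$ helps $G^*$ by $\Theta(\gamma^2\surp(G^*))=O(\gamma^2m^{3/4})$ but damages the $\Omega(m)$ non-good edges by $\Theta(\gamma^2 m)$, and soft Gaussian rounding of the semidefinite solution loses $\Theta(\delta^2\sum_{\text{big pairs}}|X_{uv}|)=O(\delta^2 m)$ against a gain of only $O(\delta^2 m^{3/4})$. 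The one structural feature that seems usable is that \emph{linearity} forces the set of $G^*$-pairs and the set of vertex-pairs lying inside some big live edge to be disjoint, since each pair of vertices lies in at most one edge of $H$.

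The route I would therefore pursue is to round a \emph{constrained} semidefinite solution: find unit vectors $z_v$ on $W$ that are pairwise orthogonal within each big live edge and still satisfy $-\sum_{\{u,v\}\text{ a }G^*\text{-pair}}m_{uv}\langle z_u,z_v\rangle=\Omega(\surp^*(G^*))$, and then set $\mathbf{x}_v=\operatorname{sign}\langle z_v,\mathbf{g}\rangle$ for a standard Gaussian $\mathbf{g}$. The orthogonality makes the restriction of $\mathbf{x}$ to each big live edge uniform, so $\mathbb{E}_{\mathbf{g}}$ of the total non-good loss is exactly $0$, while $\mathbb{E}_{\mathbf{g}}$ of the gain on $G^*$ is $\Omega(\surp^*(G^*)/\polylog)$ by the Grothendieck-type rounding behind Corollary~\ref{cor:surplus_energy}; combining this with $\mathbb{E}(\surp(G^*))=\Omega(m^{3/4}/\polylog m)$ and averaging over the first stage then gives $\surp_k(H)=\Omega_r(m^{3/4}/\polylog m)$, after which one would still have to remove the polylogarithmic loss, which is present already in Theorem~\ref{thm:main_cut_linear}. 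The main obstacle is the very first clause of this plan: reconstructing the positive semidefinite matrix of the energy argument --- which is built from the negative-eigenspace projector of the adjacency matrix of $G^*$ --- so that it additionally vanishes on all big pairs without losing more than a polylogarithmic factor in its value. Whether the disjointness coming from linearity, together with the bounded maximum degree secured by the truncation step, suffices to force such a constrained certificate is exactly where a genuinely new idea appears to be needed; an alternative worth trying is to bypass the global reduction and peel off one free part at a time, maintaining a single scalar potential so that the $\pm1$ expansion never produces a term of degree above $2$.
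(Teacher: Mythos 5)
This statement is an open \emph{conjecture} in the paper; the authors do not prove it and explicitly flag both remaining gaps in the concluding remarks --- the case $k\le r-2$, and the removal of the $(\log m)^2$ loss. So there is no ``paper's proof'' to compare against, and your write-up is (correctly) not a proof but an exploration that ends by conceding that a genuinely new idea is needed.

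Two things to sharpen. First, your opening sentence over-claims: Theorem~\ref{thm:rk} for linear $H$ gives $\surp_k(H)=\Omega_r\bigl(m^{3/4}/(\log m)^2\bigr)$ for $k\in\{r-1,r\}$, which falls short of the conjectured clean $\Omega_r(m^{3/4})$. The conjecture is therefore not established in \emph{any} case, not just $k\le r-2$; you acknowledge the polylog issue later, but the first sentence should not assert that Theorem~\ref{thm:rk} settles $k\in\{r-1,r\}$. Second, regarding $k\le r-2$: your diagnosis of the obstruction is correct and is precisely why Lemma~\ref{lemma:underlying} has no analogue here. When you reserve $V_3,\dots,V_k$ and project to a free pool $W$, the live edges with three or more vertices in $W$ contribute even-degree terms of order $\ge 4$ in the $\pm1$ expansion, which the quadratic energy certificate does not see and which have total weight $\Theta_{r,k}(m)$, swamping the hoped-for $m^{3/4}$ gain on $G^*$. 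Your observation that linearity makes the set of $G^*$-pairs disjoint from pairs inside a single big trace is accurate (two vertices determine at most one edge of a linear $H$), but that disjointness alone does not imply the existence of a PSD certificate that simultaneously realises $\Omega(\surp^*(G^*))$ and is \emph{exactly} uniform (i.e.\ pairwise-orthogonal) on every big trace: the vertex sets of big traces overlap across edges, so the orthogonality constraints couple globally, and there is no reason the negative-eigenspace projector of $G^*$'s adjacency matrix can be deformed to satisfy them without losing more than polylog in value. This is the genuine gap you name, and it is consistent with the paper leaving the problem open; treating the proposal as a proof would be wrong, but as a map of where the difficulty lies it is sound.
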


\section*{Acknowledgments}
We would like to thank Klas Markstr\"om for pointing out useful matrix concentration inequalities. Also, we would like to thank  Shengjie Xie and Jie Ma for pointing out a mistake in an earlier version of our paper.

\end{document}